\newtheorem{theorem}{Theorem}[section]
\newtheorem{lemma}{Lemma}[section]
\newtheorem{proposition}[theorem]{Proposition}
\newtheorem{corollary}[theorem]{Corollary}
\theoremstyle{definition}
\newtheorem{definition}{Definition}[section]
\newtheorem{remark}{Remark}
\DeclareMathOperator{\diver}{div}
\DeclareMathOperator{\sign}{sign}
\newcommand{\modint}{\displaystyle\copy\tratto\kern-10.4pt\int\limits}
\def\R{\mathbb R}
\def\N{\mathbb N}
\def\OV{\overline}
\def\O{\Omega}
\def\a{\alpha}
\def\d{\delta}
\def\eps{\varepsilon}
\def\f{\varphi}
\def\g{\gamma}
\def\p{\partial}
\def\s{\sigma}
\def\LEQ{\leqslant}
\def\GEQ{\geqslant}
\def\WT{\widetilde}
\def\DST{\displaystyle}
\def\HF{\hfill{$\diamondsuit$\\ }}
\def\div{{\rm div\,}}
\def\measure{{\rm measure\,}}
\def\exp{{\rm exp\,}}
\def\O{\Omega}
\def\a{\alpha}
\def\d{\delta}
\def\f{\varphi}
\def\p{\partial}
\def\s{\sigma}
\def\LEQ{\leqslant}
\def\GEQ{\geqslant}
\def\Log{{\rm log\,}}
\def\DST{\displaystyle}
\def\calD{\mathcal D}
\def\g{u}
\begin{document}

\title{Existence and uniqueness of solutions of Schr\"{o}dinger type
stationary equations
with very singular potentials \\
without prescribing
boundary conditions \\ 
and some applications}

\author{
	J.I. D\'iaz%
	\thanks{Instituto de Matem\'atica Interdisciplinar \& Dpto. de Matem\'atica Aplicada,
		Universidad Complutense de Madrid,
		Plaza de las Ciencias, 3, 28040 Madrid, Spain. 
		J.I. D\'iaz: \url{jidiaz@ucm.es}, D. G\'omez-Castro: \url{dgcastro@ucm.es}}
		\and
		D. G\'omez-Castro\footnotemark[1]
		\and
		J.M. Rakotoson%
		\thanks{Universit\'e de Poitiers,
	Laboratoire de Math\'ematiques et Applications - UMR CNRS 7348 - SP2MI, France.	
	Bd Marie et Pierre Curie,  T\'el\'eport 2,  F-86962 Chasseneuil Futuroscope Cedex, France.
	\url{rako@math.univ-poitiers.fr}} 
		\thanks{corresponding author}
	}

\maketitle

\centerline{\it  Dedicated to the Memory of Robert JANIN}
\centerline{\it Related to the  International Conference in Analysis  POITIERS, March 29-30, 2017}

\begin{abstract}
	Motivated mainly by the localization over an open bounded set $\Omega$ of $\mathbb R^n$ of solutions of the Schrödinger equations, we consider the Schrödinger equation over $\Omega$ with a very singular potential $V(x) \ge C d (x, \partial \Omega)^{-r}$ with $r\ge 2$ and a convective flow $\vec U$. We prove the existence and uniqueness of a very weak solution of the equation, when the right hand side datum $f(x)$ is in $L^1 (\Omega, d(\cdot, \partial \Omega))$, even if no boundary condition is a priori prescribed. We prove that, in fact, the solution necessarily satisfies (in a suitable way) the Dirichlet condition $u = 0$ on $\partial \Omega$. These results improve some of the results of the previous paper by the authors in collaboration with Roger Temam. In addition, we prove some new results dealing with the $m$-accretivity in $L^1 (\Omega, d(\cdot, \partial \Omega)^ \alpha)$, where $\alpha \in [0,1]$, of the associated operator, the corresponding parabolic problem and the study of the complex evolution Schrödinger equation in $\mathbb R^n$. 
	
	\textbf{Keywords} Schrödinger equation, very singular potential, no boundary conditions, very weak distributional solution, local Kato inequality, accretive operator, complex evolution equation 
	
	\textbf{AMS Classification} 35J75, 35J15, 35J25, 34K30, 76M23
\end{abstract}

\section{Introduction}

The main goal of this paper is to improve some of the results of a previous
paper by the authors in collaboration with R. Temam \cite{DGRT}, as well as
some of the recent researches presented in \cite{OrPon}, concerning the Schr\"{o}%
dinger type stationary equations with a very singular potentials and/or a possibly unbounded convective flow
\begin{equation}
-\Delta u +\vec U(x)\cdot \nabla u +V(x)u =f(x)\text{ in }%
\Omega ,  \label{equation}
\end{equation}%
where $\Omega $ is an open subset of $\mathbb R^{n}$and $f\in
L^{1}(\Omega ,\delta )$, with 
\begin{equation}
\delta (x):=d(x,\partial \Omega ).
\end{equation}%
We assume given a convective flow $\vec U\in L^{n}(\Omega )^{n}$ \ such
that 
\begin{equation}
\begin{cases}
\diver\vec U=0 & \Omega , \\ 
\vec U\cdot \vec{\nu}=0 & \partial \Omega ,%
\end{cases}
\label{eq:incompressible strong}
\end{equation}%
with $\vec \nu$ the unit exterior normal vector to $\partial \Omega$ and a potential $V(x)$ in the general class of functions satisfying $V\in
L_{loc}^{1}(\Omega ),V\geq 0$ a.e. on $\Omega $. Our main motivation is to deal
with ``very singular potentials'' in the
sense that they satisfy
\begin{equation}
V(x)\geq \frac{C}{\delta (x)^{r}}\text{ for some }r\geq 2,\text{ near }%
\partial \Omega .  \label{singular potential}
\end{equation}%
but many results are obtained merely for $V \ge 0$ when $f$ behaves suitably near $\partial \Omega$.
We send the reader to \cite{DGRT} for considerations and references
concerning the case of \textquotedblleft moderate
singular\textquotedblright\ potentials corresponding to $r\in (0,2).$ Notice
that our purpose, as already indicated in the title of the paper, is to prove
the existence and uniqueness of a suitable class of solutions of \eqref{equation} without prescribing any boundary condition in an explicit way.
Nevertheless, we shall demand the solutions to have a certain integrability condition
which implicitly assumes some behaviour on $\partial \Omega $: we
shall enter into details later.

In our previous paper \cite{DGRT} we offered a set of relevant applications
leading to the consideration of problem (\ref{equation}). In the special
case of $\vec U=\vec{0}$ some of those motivations where: linearization of
singular and /or degenerate nonlinear equations, shape optimization in
Chemical Engineering and, very specially, the study of ground solutions $%
\bm{\psi }(t,x)=e^{-iEt}u (x)$ of the Schr\"{o}dinger equation%
\begin{equation}
\begin{dcases} i\frac{\partial \bm{\psi }}{\partial t}=-\Delta
\bm{\psi }+V(x)\bm{\psi } & \text{in }(0,\infty )\times \mathbb
R^{n} \\ \bm{\psi }(0,x)=\bm{\psi }_{0}(x) & \text{on } \mathbb
R^{n}\end{dcases}  \label{Schroedinger  whole space}
\end{equation}%
for very singular potentials (i.e., satisfying \eqref{singular potential}) which try to
confine the wave function $\bm \psi$ of the particle in the domain $\Omega $ of $\mathbb R^{n}.$ A very interesting source of concrete singular potentials
examples was described in the long paper \cite{Frank} where only asymptotic
technics were sketched for the treatment of the problems. We recall that the
confinement takes place once that we prove that the solutions of (\ref%
{equation}) are, in fact, \textquotedblleft flat
solutions\textquotedblright\ (in the sense that $u =\frac{\partial
u }{\partial n}=0$ on $\partial \Omega $).

Concerning the case $\vec U\neq \vec {0}$ the main motivation mentioned in 
\cite{DGRT} was the study of the vorticity equation in Fluid Mechanics.
Schrödinger equations involving also a flux term, motivated by some
questions in Control Theory, were already considered also by several authors
when proving the \textquotedblleft unique continuation
property\textquotedblright\ (see, e.g. \cite{Ionescu-Kening} and its
references). Notice that the existence of flat solutions to this equation
implies the failure of the \textquotedblleft unique continuation
property\textquotedblright\ \ for such very singular class of potentials.

So, roughly speaking, the aim of this paper is to study the problem 
\begin{subequations}
\label{eq:strong problem formulation}
\begin{align} 
	Au &=f \text{ in }\Omega ,
	\label{eq:strong problem formulation interior} \\
	u  &=0 \text{ on }\partial \Omega ,
\end{align}
where 
\end{subequations}
\begin{equation}
	Au =-\Delta u +\vec U\cdot \nabla u +Vu .
\end{equation}
The content of this paper is organized as follows: after a short presentation of notations, definitions and previous results (in \Cref{sec:2}), we list in \Cref{sec:3} some of the main new results in this paper. The equivalence between two different notions of very weak solutions of the equation under considerating is proved in \Cref{sec:4} by means of a sharper approximation argument applied to the test functions. \Cref{sec:5} contains the proof of the new existence and regularity regularity results, while the uniqueness of such solutions is considered in \Cref{sec:6}. Here the main tool is a new ``local type Kato inequality'' in which no use is made on possible boundary conditions (in the standard sense). The analysis of the solution when the right hand side datum $f$ is in $L^1( \Omega; \delta ^ \alpha)$ with $\alpha \in [0,1]$ is made in \Cref{sec:7}. Finally, \Cref{sec:8} collects several applications. In \Cref{subsec:8.1} we prove the $m$-accretiveness of the operator in $L^1 (\Omega, d(\cdot, \partial \Omega)^\alpha)$ (and in $L^p (\Omega, d(\cdot, \partial \Omega)^\alpha)$ when $\vec U = 0$ or $\alpha = 0$). Some consequences in terms of the associated parabolic problem are presented. \Cref{subsec:8.2} deals with the evolution (complex) Schrödinger problem in $\mathbb R^n$ associated to the very singular potential. We prove the localization of the solution in the sense that if $\textrm{supp}\,\psi_0 \subset \overline \Omega$ then $\textrm{supp}\, \psi (t, \cdot) \subset \overline \Omega$, for all $t \ge 0$.

\section{Notations, definitions and previous results} \label{sec:2}
We shall adopt the same notations as in our previous paper \cite{DGRT}.
We set 
$$
	L^0(\O)=\Big\{v:\O\to\R\hbox{ Lebesgue measurable}\Big\}
$$ 
and we denote    by $L^p(\O)$ the usual Lebesgue space $1\LEQ p\LEQ +\infty$.
Although it is not too often used, we shall use the notation 
$$W^{1,p}(\O)=W^1L^p(\O)$$
for the  associated Sobolev space. We need the following definitions:
\begin{definition}[of the distribution function and monotone rearrangement]
	Let $u\in L^0(\O)$. The distribution function of $u$ is the decreasing function
	\begin{eqnarray*}
		m=m_u:\R &\to &[0,|\O|] \\
			t & \mapsto & \measure\big\{ x:u(x)>t\big\}=|\big\{u>t\big\}|.
	\end{eqnarray*} 
	The generalized inverse $u_*$ of $m$ is defined by, for $s\in[0,|\O|[$,
	$$
		u_*(s)=\inf\Big\{t:|\big\{u>t\big\}|
	\LEQ s\Big\},
	$$
	and is called the decreasing rearrangement of $u$. We shall set $\O_*=]0,|\O|\,[.$
\end{definition}
\begin{definition}\label{d5}
	Let $1\LEQ p\LEQ +\infty,\ 0<q\LEQ+\infty$ :
	\begin{itemize}
		\item If $q<+\infty$, one defines the following norm for $u\in L^0(\O)$
	$$\|u\|_{p,q}=\|u\|_{L^{p,q}}:=\left[\int_{\O_*}\left[t^{\frac1p}|u|_{**}(t)\right]^q\frac{dt}t\right]^{\frac1q}\hbox{ where }|u|_{**}(t)=\dfrac1t\int_0^t|u|_*(\s)d\s.$$
		\item If $q=+\infty$,
	$$\|u\|_{p,\infty}=\sup_{0<t\LEQ|\O|}t^{\frac1p}|u|_{**}(t)
	.$$
	The space 
	\begin{equation} 
		L^{p,q}(\O)=\Big\{ u\in L^0(\O):\|u\|_{p,q}<+\infty\Big\}
	\end{equation}
	 is called a {\bf Lorentz space}.
	
		\item If $p=q=+\infty,$ $L^{\infty,\infty}(\O)=L^\infty(\O).$	
		\item The dual of $L^{1,1}(\O)$ is called $L_\exp(\O)$
	\end{itemize}
\end{definition}

\begin{remark}
We recall that $L^{p,q}(\O)\subset L^{p,p}(\O)=L^p(\O)$ for any $p>1,\ q\GEQ1$.
\end{remark}

\begin{definition}
If $X$  is a Banach space in $L^0(\O)$, we shall denote the Sobolev space associated to $X$ by
$$W^1X=\Big\{\f\in L^1(\O):\nabla\f\in X^n\Big\}$$ or more generally for $m\GEQ1$,
$$ W^mX=\Big\{\f\in W^1X,\ \forall\,\a=(\a_1,\ldots,\a_n)\in\N^n,\ |\a|=\a_1+\ldots+\a_n\LEQ m,\ D^{|\a|}\f\in X\Big\}.$$
We also set
$$W^1_0X=W^1X\cap W_0^{1,1}(\O).$$
\end{definition}
We shall often use the principal eigenvalue $\f_1 \in W_2$ of the homogeneous Dirichlet problem
\begin{equation}\label{eq03}
	\begin{dcases}
		-\Delta\f_1=\lambda_1\f_1 & \hbox{ in }\O,\\
		\f_1=0 & \hbox{ on }\p\O,
	\end{dcases}
\end{equation}
where
\begin{equation}
	W_{2}=\left\{ \varphi \in C^{2}(\bar{\Omega}):\varphi =0\text{ in }\partial
	\Omega \right\} .
\end{equation}%
We also need to recall the Hardy's inequality in $L^{n^{\prime },\infty }$ \
saying that 
\begin{equation}
\int_{\Omega }\frac{|u |}{\delta }\leq C\Vert \nabla u \Vert
_{L^{n^{\prime },\infty }}\qquad \forall u \in W_{0}^{1}L^{n^{\prime
},\infty }(\Omega ),
\end{equation}
with $n' = \frac n {n-1}$. This inequality can be obtained from the results of \cite{RakoJFA} (see also \cite{DR}) since $W_0^1 L^{n',\infty} (\Omega) \subset W_0^1 (\Omega; 1 + |\log \delta|)$.
\begin{definition} In the weak setting, by %
\eqref{eq:incompressible strong} we will mean 
\begin{equation}
\int_{\Omega }\varphi \nabla \phi \cdot \vec U=-\int_{\Omega }\phi \nabla
\varphi \cdot \vec U \qquad \forall \phi ,\varphi \in W_{2}.
\label{eq:incompressible u weak}
\end{equation}%
\end{definition}
In fact we will consider one of the following general assumptions (independently of the singularity of $V$): 
\begin{equation*}
\begin{cases}
V\in L_{loc}^{1}(\Omega ),V\geq 0, \\ 
\vec U\in L^{p,1}(\Omega )^n ,\text{ for some }p>n,\text{ and such that }%
\eqref{eq:incompressible u weak} \textrm{ holds}.%
\end{cases}
\label{eq:H with p greater n}
\tag{H$_1$}
\end{equation*}%
or 
\begin{equation*}
\begin{cases}
V\in L_{loc}^{1}(\Omega ),V\geq 0, \\ 
\vec U\in L^{n,1}(\Omega )^n,\text{ with small norm (as in Theorem 4.1 in \cite{DGRT}), and such
that }\eqref{eq:incompressible u weak} \textrm{ holds}.%
\end{cases}
\label{eq:H with p equal n}
\tag{H$_2$}
\end{equation*}%
Most frequently we will assume that 
\begin{equation}
\text{either \eqref{eq:H with p greater n} or \eqref{eq:H with p equal n} holds}.  \tag{H}  \label{H}
\end{equation}%
\begin{definition}
	Under assumption \eqref{H}, the local very weak formulation of 
	\eqref{eq:strong
		problem formulation interior} results
	\begin{equation}
	\int_{\Omega }u (-\Delta \phi -\vec U\cdot \nabla \phi
	+V\phi )=\int_{\Omega }f\phi \qquad \forall \varphi \in \mathcal{C}%
	_{c}^{2}(\Omega ).
	\end{equation}%
	For $V\in L^{1}(\Omega ,\delta )$, we say that $u $ is a ``very weak
	solution in the sense of Brezis'' of \eqref{eq:strong problem formulation} if 
	\begin{subequations}
		\begin{equation}
		\begin{dcases} V u \delta \in L^1 (\Omega) \textrm{ and } \\
		\int_\Omega u (-\Delta \phi - \vec U \cdot \nabla \phi + V \phi ) =
		\int_{\Omega} f \phi \qquad \forall \varphi \in W_2. \end{dcases}
		\label{eq:vws Brezis}
		\end{equation}%
		When $V$ is only in $L_{loc}^{1}(\Omega )$, we will say that $u $ is a
		``very weak distributional solution'' of \eqref{eq:strong problem formulation}
		if 
		\begin{equation}
		\begin{dcases} V u \delta \in L^1 (\Omega) \textrm{ and } \\
		\int_\Omega u (-\Delta \phi - \vec U \cdot \nabla \phi + V \phi ) =
		\int_{\Omega} f \phi \qquad \forall \varphi \in \mathcal C^2 _c ( \Omega).
		\end{dcases}  \label{eq:vws distributional}
		\end{equation}%
	\end{subequations}
\end{definition}
When $f\in L^{1}(\Omega ,\delta )$ the natural setting for both types of solutions is 
\begin{equation}
u \in L^{n^{\prime },\infty }(\Omega ).
\end{equation}
In our previous paper \cite{DGRT} we proved that:
\begin{theorem}[\cite{DGRT}] \label{t1}
Let $f\in L^{1}(\Omega ,\delta )$ and \eqref{H} hold. Then, there
exists $u \in L^{n^{\prime },\infty }(\Omega )$ such that 
\eqref{eq:vws
distributional} holds. Furthermore if $V\in L^{1}(\Omega ,\delta )$ then %
\eqref{eq:vws Brezis} is satisfied.
\end{theorem}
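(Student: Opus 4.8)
The plan is to construct the solution by approximation, regularizing both the potential and the datum, then passing to the limit using the a priori $L^{n',\infty}$ bound that comes from Hardy's inequality and a duality argument. First I would truncate the potential, setting $V_k = \min\{V, k\}$, and mollify/truncate the datum so that $f_k \to f$ in $L^1(\Omega,\delta)$ with $f_k \in L^\infty(\Omega)$ (or at least in a space where classical theory applies). For each $k$, the operator $u \mapsto -\Delta u + \vec U \cdot \nabla u + V_k u$ with homogeneous Dirichlet data is coercive on $H^1_0(\Omega)$ — here one uses that $\diver \vec U = 0$ together with \eqref{eq:incompressible u weak} to kill the convective term in the energy estimate, and the smallness of $\|\vec U\|_{L^{n,1}}$ under \eqref{eq:H with p equal n} (or the subcriticality under \eqref{eq:H with p greater n}) to absorb it — so a weak solution $u_k \in H^1_0(\Omega)$ exists by Lax–Milgram, and is in particular a very weak distributional solution.

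The second step is the uniform estimate. Testing the equation against a suitable dual problem, or more directly against $\operatorname{sign}(u_k)$-type truncations combined with the eigenfunction $\varphi_1$, I would derive $\|u_k\|_{L^{n',\infty}(\Omega)} \le C \|f_k\|_{L^1(\Omega,\delta)}$ with $C$ independent of $k$; this is exactly the estimate underlying \Cref{t1} and rests on Hardy's inequality $\int_\Omega |w|/\delta \le C\|\nabla w\|_{L^{n',\infty}}$ applied in the dual formulation (solve $-\Delta w - \vec U\cdot\nabla w = g$ for $g \in L^{n,1}$, test with $u_k$, and control $\int u_k g$). Simultaneously I would extract a uniform bound on $\int_\Omega V_k |u_k|\,\delta$: multiplying the equation by a fixed test function comparable to $\delta$ near the boundary (e.g. $\varphi_1$, or a smooth function equal to $\delta$ in a collar) and using $V_k\ge 0$, the term $\int V_k u_k \varphi_1$ is controlled by $\int |f_k|\varphi_1 + \|u_k\|_{L^{n',\infty}}\|\Delta\varphi_1 + \vec U\cdot\nabla\varphi_1\|$, which is uniformly bounded.

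The third step is passage to the limit. From the uniform $L^{n',\infty}$ bound, up to a subsequence $u_k \rightharpoonup u$ weakly-$*$ in $L^{n',\infty}(\Omega)$; the uniform bound on $\int V_k|u_k|\delta$ plus a Fatou/Vitali argument (using $V_k \uparrow V$ and a.e. convergence of $u_k$, which one gets from local $H^1$ compactness since $V_k$ is locally bounded) gives $Vu\delta \in L^1(\Omega)$ and $V_k u_k \to Vu$ in $L^1_{loc}$ — the equi-integrability near $\partial\Omega$ is the delicate point. For a fixed $\phi \in \mathcal{C}^2_c(\Omega)$ every term in $\int u_k(-\Delta\phi - \vec U\cdot\nabla\phi + V_k\phi) = \int f_k\phi$ passes to the limit (the convective term because $\vec U \in L^{p,1}$ with $p>n$ pairs with $L^{n',\infty}$, the potential term because on $\operatorname{supp}\phi$ we have $V_k \to V$ in $L^1$ and $u_k\to u$ boundedly a.e.), yielding \eqref{eq:vws distributional}. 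For the second assertion, when $V \in L^1(\Omega,\delta)$ one upgrades the class of test functions from $\mathcal{C}^2_c(\Omega)$ to $W_2$ by the density/approximation argument promised for \Cref{sec:4}: given $\phi \in W_2$, approximate it by $\phi_j \in \mathcal{C}^2_c(\Omega)$ with $\phi_j \to \phi$, $\nabla\phi_j \to \nabla\phi$, $\Delta\phi_j \to \Delta\phi$ in appropriate norms and $|\phi_j| \le C\delta$ uniformly, so that the dominated convergence theorem applies to $\int u V \phi_j$ using $Vu\delta \in L^1$ as the dominating envelope, and to $\int f\phi_j$ using $f \in L^1(\Omega,\delta)$.

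The main obstacle I anticipate is the control near $\partial\Omega$: ensuring that $V_k u_k \delta$ is equi-integrable up to the boundary (so the limit genuinely satisfies $Vu\delta \in L^1$ and no mass is lost) and, relatedly, that the test-function approximation in the Brezis case respects the weight $\delta$. This is where the sign condition $V\ge 0$ and the choice of a comparison function vanishing linearly at $\partial\Omega$ are essential, and it is the step that must be done carefully rather than by a soft compactness argument.
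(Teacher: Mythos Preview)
This theorem is not proved in the present paper; it is recalled from \cite{DGRT} as background. What the paper does contain is a restatement of the approximation scheme from \cite{DGRT} as \Cref{t11}, and your proposal matches that scheme in outline: truncate $V$ by $V_j=\min(V,j)$, truncate $f$ by $f_j=\sign(f)\min(|f|,j)$, solve the regularized Dirichlet problem, obtain the uniform $L^{n',\infty}$ bound and the weighted bound $\int_\Omega V_j|u_j|\delta\le C\int_\Omega|f_j|\delta$, and pass to the limit with compactly supported test functions. One ingredient you omit is that the scheme in \Cref{t11} also regularizes the drift, replacing $\vec U$ by $\vec U_j\in\mathcal C^\infty_c(\Omega)^n$ with $\vec U_j\to\vec U$ in $L^{p,1}$; this is what makes the approximate problem genuinely land in $W^{1,1}_0\cap W^2L^{p,1}$ and gives the convergence $u_j\vec U_j\to u\vec U$ in $L^1(\Omega)^n$ listed there.

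There is a gap in your treatment of the second assertion. You propose to pass from $\mathcal C^2_c(\Omega)$ to $W_2$ test functions via the approximation of \Cref{sec:4}, claiming one can arrange $\Delta\phi_j\to\Delta\phi$ ``in appropriate norms''. That convergence is not available: for the cutoff construction $\phi_j=h_j\phi$ of \Cref{t3}, the Laplacian $\Delta\phi_j$ carries a term $2\nabla h_j\cdot\nabla\phi$ of size $\sim j$ on a collar of measure $\sim 1/j$, so $\Delta\phi_j$ does not converge to $\Delta\phi$ in any $L^q$. The best one gets (and what \Cref{t3} proves) is $\delta\Delta\phi_j\rightharpoonup\delta\Delta\phi$ weak-$\star$ in $L^\infty$, which only pairs against $u/\delta\in L^1$; that is the hypothesis of \Cref{lem:equivalence}, not the hypothesis $V\in L^1(\Omega,\delta)$ of the clause you are proving. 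The route actually taken in \cite{DGRT} for this clause is to stay with $W_2$ test functions throughout: each approximate $u_j$ already satisfies the Brezis formulation because $u_j\in W^{1,1}_0\cap W^2L^{p,1}$, and one passes to the limit in $\int_\Omega u_j(-\Delta\phi-\vec U_j\cdot\nabla\phi+V_j\phi)$ directly for $\phi\in W_2$, using that $\Delta\phi\in L^\infty$ pairs with $u_j\rightharpoonup u$ in $L^{n',\infty}$ and that the assumption $V\in L^1(\Omega,\delta)$ is what upgrades the local weak $L^1$ convergence of $V_ju_j\delta$ (item 7 of \Cref{t11}) to global.
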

Moreover, even without ``usual'' boundary conditions (see Remark 9 in \cite{DGRT} for some comments on problem of different nature leading to uniqueness without boundary conditions),  we also proved the following uniqueness result:
\begin{theorem}[\cite{DGRT}]\label{t28}
 There exists, at most, one solution $u $ of \eqref{eq:vws distributional}
such that $\frac{u }{\delta ^{r}}\in L^{1}(\Omega )$, for some $r>1$.
\end{theorem}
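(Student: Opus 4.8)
The plan is to show $w:=u_1-u_2\equiv0$ for two such solutions $u_1,u_2$ (with the same datum $f$), by a duality argument that pairs a local Kato inequality for $w$ with a strictly positive supersolution $\zeta$ of the adjoint convective operator $-\Delta-\vec U\cdot\nabla$. Replacing $r_1,r_2$ by their minimum we may assume $u_i/\delta^{r}\in L^1(\Omega)$ with a common $r>1$; then, taking the solutions in the natural class, $w\in L^{n',\infty}(\Omega)$, $Vw\delta\in L^1(\Omega)$, $w/\delta^{r}\in L^1(\Omega)$ (so $w\in L^1(\Omega)$ since $\delta$ is bounded), and
\[
\int_\Omega w\,(-\Delta\phi-\vec U\cdot\nabla\phi+V\phi)=0\qquad\forall\,\phi\in\mathcal C_c^2(\Omega),
\]
which, writing $\vec U\cdot\nabla w=\diver(w\vec U)$ since $\diver\vec U=0$, means $-\Delta w+\vec U\cdot\nabla w+Vw=0$ in $\mathcal D'(\Omega)$.

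The first step --- and the crucial one --- is the local Kato inequality
\[
-\Delta|w|+\vec U\cdot\nabla|w|+V|w|\le0\quad\text{in }\mathcal D'(\Omega),
\]
i.e.\ $\int_\Omega|w|(-\Delta\psi)-\int_\Omega|w|\,\vec U\cdot\nabla\psi+\int_\Omega V|w|\,\psi\le0$ for every $0\le\psi\in\mathcal C_c^\infty(\Omega)$. Formally it follows from the equation for $w$ and the chain rule ($-\Delta j(w)\le j'(w)(-\Delta w)$ and $\vec U\cdot\nabla j(w)=j'(w)\,\vec U\cdot\nabla w$ for smooth convex $j$), letting $j\nearrow|\cdot|$, $j'\to\sign$, and using $V\ge0$. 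Since $w$ is only a very weak solution --- no interior Sobolev bound is available, only $w\in L^{n',\infty}(\Omega)$ with $Vw\delta\in L^1(\Omega)$ --- I would carry this out by mollifying $w$ on compact subsets of $\Omega$ and passing to the limit, the convective commutator being absorbed thanks to $\diver\vec U=0$; this is precisely the ``local type Kato inequality'' of \Cref{sec:6} (cf.\ also \cite{DGRT}), and I expect making it rigorous to be the real obstacle of the proof.

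The second step is the barrier. Let $\zeta\in H_0^1(\Omega)$ solve $-\Delta\zeta-\vec U\cdot\nabla\zeta=1$ in $\Omega$, $\zeta=0$ on $\partial\Omega$: it exists and is unique by Lax--Milgram, because $\diver\vec U=0$ forces $\int_\Omega(\vec U\cdot\nabla\zeta)\zeta=\tfrac12\int_\Omega\vec U\cdot\nabla(\zeta^2)=0$ and hence coercivity of the form on $H_0^1(\Omega)$, while \eqref{H} gives its continuity; the maximum principle gives $\zeta>0$ in $\Omega$, and Hopf's lemma together with elliptic regularity (again via \eqref{H}) give $\zeta\in C^1(\overline\Omega)$ with $c\,\delta\le\zeta\le C\,\delta$ in $\Omega$, $\nabla\zeta\in L^\infty(\Omega)$, and $\Delta\zeta=-1-\vec U\cdot\nabla\zeta\in L^p(\Omega)$. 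Take cut-offs $\xi_k\in\mathcal C_c^2(\Omega)$, $0\le\xi_k\le1$, with $\xi_k\equiv1$ on $\{\delta>2/k\}$, $\xi_k\equiv0$ on $\{\delta<1/k\}$, $|\nabla\xi_k|\le Ck$, $|\Delta\xi_k|\le Ck^2$, and test the Kato inequality with $\psi=\zeta\xi_k$ (after a harmless mollification). Expanding the derivatives of $\zeta\xi_k$ and using $-\Delta\zeta-\vec U\cdot\nabla\zeta=1$ --- so that the two $\vec U\cdot\nabla\zeta$ contributions cancel and the forcing $1$ yields the term $\int_\Omega|w|\,\xi_k$ --- one is left with
\[
\int_\Omega|w|\,\xi_k+\int_\Omega V|w|\,\zeta\,\xi_k\le 2\!\int_\Omega|w|\,\nabla\zeta\cdot\nabla\xi_k+\int_\Omega|w|\,\zeta\,\Delta\xi_k+\int_\Omega|w|\,\zeta\,\vec U\cdot\nabla\xi_k .
\]
The three terms on the right are supported in the shell $\{1/k<\delta<2/k\}$, where $\zeta\le 2C/k$ and $|w|\le(2/k)^{r}\,(|w|/\delta^{r})$: the first two are $O\!\big(k^{1-r}\,\|w/\delta^{r}\|_{L^1(\Omega)}\big)\to0$ (this is exactly where $r>1$ is used, to beat the $k^2$ of $\Delta\xi_k$ against the $1/k$ width of the shell), and the third is $\le C\!\int_{\{1/k<\delta<2/k\}}|w|\,|\vec U|\to0$ by absolute continuity of the integral of $|w|\,|\vec U|\in L^1(\Omega)$ (H\"older in Lorentz spaces, \eqref{H} placing $\vec U$ in the dual of $L^{n',\infty}$). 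On the left, $\int_\Omega|w|\,\xi_k\to\int_\Omega|w|$ by monotone convergence and $\int_\Omega V|w|\,\zeta\,\xi_k\to\int_\Omega V|w|\,\zeta$ by dominated convergence, since $V|w|\zeta\le C\,V|w|\delta\in L^1(\Omega)$. Letting $k\to\infty$ gives $\int_\Omega|w|+\int_\Omega V|w|\,\zeta\le0$; both terms being nonnegative, $\int_\Omega|w|=0$, i.e.\ $w\equiv0$, which is the claim.

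In summary, the argument hinges on the Kato inequality for a solution carrying no interior regularity; the rest is bookkeeping governed by exactly two balances --- $r>1$, which makes the boundary-layer errors vanish, and $Vw\delta\in L^1(\Omega)$, which keeps the potential term convergent in the limit.
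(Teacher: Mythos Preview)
Your proof is correct and follows essentially the same route as \cite{DGRT} (where this theorem is actually proved; the present paper only cites it, and devotes itself to the improvement $r=1$ via \Cref{t3} and \Cref{thm:uniqueness L1 Omega delta}). As the discussion preceding \Cref{t3} indicates, the argument in \cite{DGRT} is: local Kato inequality for compactly supported test functions, then a cutoff approximation $\phi_j=h_j\phi\in C_c^\infty(\Omega)$ of a generic $\phi\in W_2$ with $\delta^rD^\alpha\phi_j\to\delta^rD^\alpha\phi$ strongly in $L^\infty$ for $|\alpha|\le2$ (this is exactly where $r>1$ is used), and finally testing with $\overline\phi_0$ solving $L^*\overline\phi_0=1$ --- your $\zeta\xi_k$ is precisely their $h_j\overline\phi_0$, and your $O(k^{1-r})$ shell estimate is the hands-on instance of that $L^\infty$ convergence, the only cosmetic difference being that \cite{DGRT} packages the approximation as a reusable lemma before specializing while you compute directly with the one test function needed.
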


One of the main aims of this paper is to show that this exponent $r>1$ is
not optimal in \Cref{t28} because, in fact, $r=1$ suffices. That
improves a remark (following different arguments) pointed out by H. Brezis
to the second author concerning the case $\vec U=\overrightarrow{%
0}$ (see \cite{Gomez-Castro}). Moreover, we shall present here a numerous of
\ other improvements with respect to our previous paper \cite{DGRT}, as,
for instance, the study of the associated eigenvalue problem, the
consideration of flat solutions, the accretiveness in $L^{1}(\Omega ,\delta
^{\alpha })$ of the operator when $\alpha \in \lbrack 0,1)$, \ the
consideration of the associated evolution problem, the confinement for the
solution of the complex Schrödinger problem, etc.

\section{Statement of new existence, uniqueness and regularity results}\label{sec:3}

First, we show the equivalent of the Brezis and distributional formulations,
in the space $L^1 (\Omega, \delta^{-1})$.
\begin{lemma}[equivalence of \eqref{eq:vws Brezis} and \eqref{eq:vws distributional}]
	\label{lem:equivalence} Assume that $f \in L^1 (\Omega, \delta)$, \eqref{H}
	and let $u \in L^{n^{\prime }, \infty} (\Omega) \cap L^1 (\Omega,
	\delta^{-1})$. Then \eqref{eq:vws Brezis} is equivalent to
	\eqref{eq:vws
	distributional}.
\end{lemma}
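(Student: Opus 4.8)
The plan is to show that, under the integrability hypotheses $u\in L^{n',\infty}(\Omega)\cap L^1(\Omega,\delta^{-1})$ and $f\in L^1(\Omega,\delta)$, the class of admissible test functions can be enlarged from $\mathcal C^2_c(\Omega)$ to $W_2$ without changing the validity of the identity $\int_\Omega u(-\Delta\phi-\vec U\cdot\nabla\phi+V\phi)=\int_\Omega f\phi$. Since $\mathcal C^2_c(\Omega)\subset W_2$, formulation \eqref{eq:vws Brezis} trivially implies \eqref{eq:vws distributional} (noting also that $Vu\delta\in L^1(\Omega)$ is the same condition in both); the real content is the converse. So assume \eqref{eq:vws distributional} holds and fix an arbitrary $\phi\in W_2$. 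The idea is to approximate $\phi$ by a sequence $\phi_k\in\mathcal C^2_c(\Omega)$, apply the identity to each $\phi_k$, and pass to the limit in all three terms on the left and in the right-hand side.

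The construction of the $\phi_k$ is the delicate point, because a naive cutoff $\phi\,\zeta_k$ (with $\zeta_k$ supported away from $\partial\Omega$) produces a $\Delta(\phi\zeta_k)$ term containing $\phi\,\Delta\zeta_k$ and $2\nabla\phi\cdot\nabla\zeta_k$, and these do not obviously vanish against $u\in L^{n',\infty}$. This is where the hypothesis $u\in L^1(\Omega,\delta^{-1})$ enters and is used in an essential way, exactly as the lemma's placement "in the space $L^1(\Omega,\delta^{-1})$" suggests. Concretely, I would take $\zeta_k$ depending only on $\delta$, of the form $\zeta_k=\eta(\delta/\varepsilon_k)$ with $\eta$ smooth, $\eta\equiv 0$ near $0$ and $\eta\equiv 1$ on $[1,\infty)$, so that $|\nabla\zeta_k|\lesssim \varepsilon_k^{-1}\mathbf 1_{\{\delta<\varepsilon_k\}}$ and $|\Delta\zeta_k|\lesssim \varepsilon_k^{-2}\mathbf 1_{\{\delta<\varepsilon_k\}}$ (using that $\delta$ is Lipschitz and, on a strip near $\partial\Omega$, $C^2$ with bounded derivatives, $|\nabla\delta|=1$ a.e.). Since $\phi\in W_2$ vanishes on $\partial\Omega$ and is $C^2$ up to the boundary, $|\phi|\lesssim\delta$ and $|\nabla\phi|\lesssim 1$ on that strip, so $|\phi\,\Delta\zeta_k|\lesssim\varepsilon_k^{-1}\mathbf 1_{\{\delta<\varepsilon_k\}}$ and $|\nabla\phi\cdot\nabla\zeta_k|\lesssim\varepsilon_k^{-1}\mathbf 1_{\{\delta<\varepsilon_k\}}$. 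Then
\[
\Big|\int_\Omega u\,\big(\phi\,\Delta\zeta_k+2\nabla\phi\cdot\nabla\zeta_k\big)\Big|
\;\lesssim\;\frac{1}{\varepsilon_k}\int_{\{\delta<\varepsilon_k\}}|u|
\;\lesssim\;\int_{\{\delta<\varepsilon_k\}}\frac{|u|}{\delta}\;\longrightarrow\;0
\]
by absolute continuity of the integral $\int_\Omega|u|\delta^{-1}<\infty$; here I also smooth $\phi\zeta_k$ slightly by mollification to land genuinely in $\mathcal C^2_c(\Omega)$, which is harmless since the estimates above are stable under a small further perturbation. The convective term is handled similarly: $\vec U\cdot\nabla(\phi\zeta_k)=\zeta_k\,\vec U\cdot\nabla\phi+\phi\,\vec U\cdot\nabla\zeta_k$, and $\int_\Omega|u|\,|\phi|\,|\vec U|\,|\nabla\zeta_k|\lesssim \varepsilon_k^{-1}\int_{\{\delta<\varepsilon_k\}}|u||\vec U|\delta\lesssim\int_{\{\delta<\varepsilon_k\}}|u||\vec U|\to 0$ once one checks $|u||\vec U|\in L^1(\Omega)$, which follows from $u\in L^{n',\infty}$, $\vec U\in L^{p,1}$ with $p>n$ (or $L^{n,1}$), and Hölder's inequality in Lorentz spaces.

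With the dangerous terms controlled, the remaining passages to the limit are routine dominated-convergence arguments: $\int_\Omega u\,\zeta_k(-\Delta\phi-\vec U\cdot\nabla\phi+V\phi)\to\int_\Omega u(-\Delta\phi-\vec U\cdot\nabla\phi+V\phi)$, using $|u\Delta\phi|\lesssim|u|\in L^1(\Omega)$ (finite since $L^{n',\infty}\hookrightarrow L^1$ on a bounded set), $|u\,\vec U\cdot\nabla\phi|\lesssim|u||\vec U|\in L^1(\Omega)$ as above, and $|u V\phi|\lesssim|uV\delta|\in L^1(\Omega)$ by the standing hypothesis $Vu\delta\in L^1(\Omega)$ together with $|\phi|\lesssim\delta$; and on the right, $\int_\Omega f\,\phi\,\zeta_k\to\int_\Omega f\phi$ because $|f\phi|\lesssim|f|\delta\in L^1(\Omega)$. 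Combining, the identity for $\phi_k$ passes to the limit and yields the identity for $\phi$, proving \eqref{eq:vws Brezis}. The main obstacle, as indicated, is organizing the boundary-strip estimates so that every term generated by differentiating the cutoff is dominated by an integral over $\{\delta<\varepsilon_k\}$ of a fixed $L^1$ function — this is precisely what forces the hypothesis $u\in L^1(\Omega,\delta^{-1})$ and is the reason the weaker ambient space $L^{n',\infty}$ alone does not suffice for the equivalence.
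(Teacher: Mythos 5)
Your proposal is correct and follows essentially the same route as the paper: the paper's key tool (its Lemma 4.1) is precisely the cutoff $h_j=h((\delta-\varepsilon)/\varepsilon)$ with the estimates $\delta|\nabla h_j|\le C$, $\delta^2|\Delta h_j|\le C$ and $|\phi|\lesssim\delta$, so that the commutator terms $\phi\Delta h_j+2\nabla\phi\cdot\nabla h_j$ are controlled by $\delta^{-1}\mathbf 1_{\{\delta\lesssim 1/j\}}$ and killed by $u\in L^1(\Omega,\delta^{-1})$, exactly as you do. The only cosmetic difference is that the paper packages these estimates as $L^\infty$-weak-$\star$ convergences ($\delta\Delta\phi_j\rightharpoonup\delta\Delta\phi$, $\phi_j/\delta\rightharpoonup\phi/\delta$) paired against the $L^1$ functions $u/\delta$, $u\vec U$, $Vu\delta$, rather than estimating the error terms directly.
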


First we prove an existence result in $L^{n^{\prime },\infty}$ with
additional estimates
\begin{theorem}[General existence result]
	\label{thm:existence result} Assume that $f \in L^1 (\Omega, \delta)$ and %
	\eqref{H}. Then there exists $u \in L^{n^{\prime }, \infty}(\Omega)$
	such that \eqref{eq:vws Brezis} holds. Furthermore, if $f \ge 0$, then $u \ge
	0$. Besides 
	\begin{align}
	\int _ \Omega V |u|\delta \le C_u \int_{\Omega} |f|\delta.
	\end{align}
	where $C_u$ does not depend on $V$ and $f$.
\end{theorem}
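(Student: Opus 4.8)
The plan is to obtain $u$ as a limit of solutions to truncated/regularized problems, using the a priori estimate in $L^{n',\infty}$ that already underlies Theorem~\ref{t1}, and then to upgrade the convergence so as to pass to the limit in the Brezis test-function class $W_2$ rather than just $\mathcal C_c^2(\Omega)$. Concretely, I would first truncate the data and the potential: set $f_k$ to be a bounded approximation of $f$ with $|f_k|\le |f|$ and $f_k\to f$ in $L^1(\Omega,\delta)$ (and $f_k\ge 0$ when $f\ge 0$), and replace $V$ by $V_k=\min\{V,k\}\in L^\infty(\Omega)$. For fixed $k$ the operator $-\Delta+\vec U\cdot\nabla+V_k$ with homogeneous Dirichlet data is coercive on $H_0^1(\Omega)$ (the convective term is skew-adjoint by \eqref{eq:incompressible u weak} under \eqref{H}, and $V_k\ge 0$), so Lax--Milgram — or, since $f_k\delta\in L^1$ only, a transposition/duality argument against the dual problem — yields a unique $u_k\in L^{n',\infty}(\Omega)$ solving \eqref{eq:vws Brezis} with data $(f_k,V_k)$; when $f_k\ge 0$ the maximum principle gives $u_k\ge 0$.

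The second step is the key a priori bound. Testing the $u_k$-equation with $\phi=\varphi_1$ (or with a smooth approximation of $\mathrm{sign}(u_k)\delta$, in the spirit of the local Kato inequality announced for \Cref{sec:6}) and using Hardy's inequality $\int_\Omega |u|/\delta\le C\|\nabla u\|_{L^{n',\infty}}$ together with the $L^n$-type control on $\vec U$ from \eqref{H}, I expect to derive simultaneously a uniform bound $\|u_k\|_{L^{n',\infty}}\le C\int_\Omega|f|\delta$ and the crucial estimate
\begin{equation}
\int_\Omega V_k|u_k|\,\delta \le C_u\int_\Omega |f|\,\delta,
\end{equation}
with $C_u$ independent of $k$, $V$ and $f$ (it will depend only on $\Omega$, $n$ and the structural constants of \eqref{H}, e.g. through $\varphi_1$ and $\lambda_1$). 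The point is that the bad term $\int V_k u_k\phi$ produced by testing has a sign and can be moved to the left, while the remaining terms are controlled by $\int|f|\delta$ via Hardy.

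The third step is compactness and passage to the limit. From the uniform $L^{n',\infty}$ bound, a subsequence $u_k\rightharpoonup u$ weakly-$*$ (and, by an interior regularity/compactness argument on the equation, strongly in $L^1_{loc}$, so that $u\ge 0$ survives); from $\int V_k|u_k|\delta\le C$ and Fatou one gets $Vu\delta\in L^1(\Omega)$ with the stated bound. For the distributional formulation \eqref{eq:vws distributional} the limit passage against $\phi\in\mathcal C_c^2(\Omega)$ is routine since on $\mathrm{supp}\,\phi$ one has $V_k\nearrow V$ dominatedly. The real obstacle — and the reason \Cref{lem:equivalence} is stated first — is upgrading to test functions $\phi\in W_2$, which do not have compact support: there the term $\int u_k(-\Delta\phi-\vec U\cdot\nabla\phi)$ needs care near $\partial\Omega$ because $\phi\sim\delta$ only, and $\int V_k u_k\phi$ must be shown to converge to $\int Vu\phi$ using the equi-integrability furnished by the uniform bound on $\int V_k|u_k|\delta$ rather than a pointwise majorant. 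I would handle this exactly as in \Cref{lem:equivalence}: cut off $\phi$ by $\phi\,\zeta_\epsilon$ with $\zeta_\epsilon$ vanishing near $\partial\Omega$, pass to the limit in $k$ first, then let $\epsilon\to 0$, the error terms being absorbed by $\int_\Omega Vu\delta<\infty$ and $\int_\Omega|f|\delta<\infty$. This yields \eqref{eq:vws Brezis} for $u$, completing the proof. Uniqueness is not claimed here and is deferred to \Cref{sec:6}.
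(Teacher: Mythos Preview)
Your plan reproduces the route to the \emph{distributional} formulation (essentially Theorem~\ref{t11}), but the final upgrade to test functions $\phi\in W_2$ has a genuine gap. The cutoff step --- pass to the limit in $k$ against $\phi\zeta_\epsilon\in \mathcal C^2_c(\Omega)$, then send $\epsilon\to0$ --- is exactly the content of \Cref{lem:equivalence}, whose hypothesis $u/\delta\in L^1(\Omega)$ you have not established (and which is not available for general $f\in L^1(\Omega,\delta)$). The obstruction is the Laplacian term: expanding $\Delta(\phi\zeta_\epsilon)$ produces $2\nabla\phi\cdot\nabla\zeta_\epsilon+\phi\Delta\zeta_\epsilon$, of size $O(\epsilon^{-1})$ on the strip $\{\epsilon<\delta<2\epsilon\}$; equivalently (cf.\ \Cref{t3}) one only knows that $\delta\,\Delta(\phi\zeta_\epsilon)$ stays bounded in $L^\infty$, and pairing that with $u$ requires precisely $u/\delta\in L^1$. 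The bounds $\int V|u|\delta<\infty$ and $\int|f|\delta<\infty$ do not touch this term. Separately, the uniform bound $\int V_k|u_k|\delta\le C$ is merely an $L^1$ bound and does not furnish the equi-integrability you invoke for $\int V_ku_k\phi\to\int Vu\phi$ with $\phi\in W_2$; indeed item~7 of Theorem~\ref{t11} only yields $L^1_{loc}$-weak convergence of $V_ju_j\delta$.

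The paper sidesteps both difficulties by a device that \emph{uses the uniqueness theorem as an input}. One truncates only $f$, keeping the full $V$: for $f_j=\sign(f)\min(|f|,j)\in L^1(\Omega)$, Theorem~\ref{c1t29} (which rests on Proposition~\ref{t12}, on Hardy's inequality giving $\widetilde u_j/\delta\in L^1$, and then on \Cref{lem:equivalence}) furnishes a unique $\widetilde u_j$ already satisfying \eqref{eq:vws Brezis}. Uniqueness identifies $\widetilde u_j-\widetilde u_k$ as the solution for the datum $f_j-f_k$, and the a~priori estimates then make $(\widetilde u_j)$ Cauchy in $L^{n',\infty}(\Omega)$ and $(V\widetilde u_j)$ Cauchy in $L^1(\Omega,\delta)$. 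With \emph{strong} convergence in these spaces the passage to the limit in \eqref{eq:vws Brezis} for any $\phi\in W_2$ is immediate: $-\Delta\phi-\vec U\cdot\nabla\phi\in L^{n,1}$ pairs with the $L^{n',\infty}$ convergence, and $\phi/\delta\in L^\infty$ pairs with the $L^1(\Omega,\delta)$ convergence of $V\widetilde u_j$. No cutoff is needed, and $u/\delta\in L^1$ is never invoked for the limit $u$.
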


Then we will extend our uniqueness result

\begin{theorem}[Uniqueness in $L^1 (\Omega, \protect\delta^{-1})$]
\label{thm:uniqueness L1 Omega delta} Assume that $f \in L^1 (\Omega,
\delta) $ and \eqref{H}. Then, there exists at most one $u \in
L^{n^{\prime }, \infty} (\Omega) \cap L^1 (\Omega, \delta^{-1})$ such that 
\eqref{eq:vws Brezis} holds.
\end{theorem}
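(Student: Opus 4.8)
The plan is to prove uniqueness by a duality/Kato-type argument. Suppose $u_1, u_2 \in L^{n',\infty}(\Omega) \cap L^1(\Omega, \delta^{-1})$ both satisfy \eqref{eq:vws Brezis} with the same $f$; set $u = u_1 - u_2$. Then $Vu\delta \in L^1(\Omega)$ and $\int_\Omega u(-\Delta\phi - \vec U\cdot\nabla\phi + V\phi) = 0$ for all $\phi \in W_2$. By \Cref{lem:equivalence}, since $u \in L^{n',\infty}(\Omega)\cap L^1(\Omega,\delta^{-1})$, the same identity holds for all $\phi \in \mathcal C_c^2(\Omega)$ as well; this extra flexibility of test functions is what the lemma buys us. The goal is to conclude $u = 0$ a.e.

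First I would establish a ``local Kato inequality'' announced in the introduction: for $u$ as above, one has, in the sense of distributions on $\Omega$,
\begin{equation*}
-\Delta |u| + \vec U\cdot\nabla|u| + V|u| \le 0 \quad \text{in } \mathcal D'(\Omega),
\end{equation*}
or more precisely $\int_\Omega |u|(-\Delta\phi - \vec U\cdot\nabla\phi + V\phi) \le 0$ for every nonnegative $\phi \in \mathcal C_c^2(\Omega)$. To get this, the standard route is to regularize: approximate $\sign(u)$ by smooth monotone functions $j_\varepsilon'$ with $j_\varepsilon$ convex, write the weak formulation tested against $j_\varepsilon'(u)\phi$-type quantities after an interior mollification of $u$ (legitimate since the equation holds in $\mathcal D'(\Omega)$ and $Vu \in L^1_{loc}$), use convexity to drop the second-order term with the right sign, and use $\diver\vec U = 0$ (in the weak sense \eqref{eq:incompressible u weak}, extended to $W^{1,1}_{loc}$ test functions by density, valid since $\vec U \in L^{p,1}$, $p>n$, or $L^{n,1}$ small) to handle the convective term. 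The delicate point is passing to the limit in $\int_\Omega V|u_\delta|\phi$; here the a priori bound $Vu\delta \in L^1$ from \Cref{thm:existence result} together with $\supp\phi \Subset \Omega$ (so $\delta \ge c > 0$ on $\supp\phi$) makes $V|u| \in L^1_{loc}(\Omega)$, which suffices.

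Next I would upgrade this local inequality to a global statement by choosing test functions adapted to the boundary behavior. The natural choice is $\phi = \varphi_1$, the principal Dirichlet eigenfunction from \eqref{eq03}, or a suitable truncation/approximation $\phi_k \nearrow \varphi_1$ with $\phi_k \in \mathcal C_c^2(\Omega)$; one knows $\varphi_1 \sim \delta$ near $\partial\Omega$ and $-\Delta\varphi_1 = \lambda_1\varphi_1 \ge 0$. Plugging such $\phi$ into the Kato inequality gives
\begin{equation*}
\lambda_1\int_\Omega |u|\varphi_1 + \int_\Omega V|u|\varphi_1 \le \int_\Omega |u|\,\vec U\cdot\nabla\varphi_1 \le 0,
\end{equation*}
where the convective term vanishes (or is controlled) using \eqref{eq:incompressible u weak} and $u \in L^{n',\infty}$, $\nabla\varphi_1 \in L^{p,1}$-dual pairing; the first two terms being nonnegative forces $|u|\varphi_1 \equiv 0$, hence $u = 0$ a.e. in $\Omega$. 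The membership $u \in L^1(\Omega,\delta^{-1})$ is exactly what guarantees all these integrals are finite and the limiting procedure $\phi_k \to \varphi_1 \sim \delta$ is justified — this is where the improvement from $r>1$ (\Cref{t28}) to $r=1$ comes from.

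The main obstacle I anticipate is the rigorous justification of the Kato inequality up to the boundary, i.e.\ the interplay between the interior regularization (which only gives information in $\mathcal D'(\Omega)$) and the global test function $\varphi_1$ which does not have compact support. One must either (i) carefully construct a sequence $\phi_k$ of compactly supported test functions increasing to $\varphi_1$ with controlled Laplacians and gradients, or (ii) exploit an already-available weak regularity of $u$ near $\partial\Omega$ (coming from $u \in L^{n',\infty} \cap L^1(\Omega,\delta^{-1})$ and the equation) to pass to the limit directly. A secondary technical point is handling the convective term $\vec U\cdot\nabla$ under only $L^{n,1}$-smallness in case \eqref{eq:H with p equal n}, where one cannot afford crude estimates and must use the smallness to absorb it; this is presumably where ``as in Theorem 4.1 in \cite{DGRT}'' is invoked.
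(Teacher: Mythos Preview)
Your overall strategy --- establish a Kato inequality for $|u|$ with compactly supported test functions, then pass to a global test function comparable to $\delta$ --- is exactly the paper's route. The paper proves the Kato inequality directly for $\phi\in W_2$ (\Cref{t4}) via the approximation \Cref{t3}, which is precisely the ``$\phi_k\to\varphi_1$ with controlled Laplacians'' step you anticipate.

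There is, however, a genuine gap in your final step when $\vec U\neq\vec 0$. With $\phi=\varphi_1$ the Kato inequality gives
\[
\lambda_1\int_\Omega|u|\varphi_1+\int_\Omega V|u|\varphi_1\le\int_\Omega|u|\,\vec U\cdot\nabla\varphi_1,
\]
and you assert the right-hand side is $\le 0$ ``using \eqref{eq:incompressible u weak}''. But incompressibility only gives $\int_\Omega g\,\vec U\cdot\nabla h=-\int_\Omega h\,\vec U\cdot\nabla g$ for admissible $g,h$; it yields zero when $g=h$, not when $g=|u|$ and $h=\varphi_1$. There is no reason for $\int_\Omega|u|\,\vec U\cdot\nabla\varphi_1$ to have a sign, and a crude bound by $\|\nabla\varphi_1\|_\infty\|u\|_{L^{n',\infty}}\|\vec U\|_{L^{n,1}}$ cannot be absorbed by the left-hand side.

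The paper circumvents this by \emph{not} using $\varphi_1$. After extending the Kato inequality by density to all nonnegative $\phi\in W^2L^{n,1}(\Omega)\cap H_0^1(\Omega)$ (which pairs with $|u|\in L^{n',\infty}$), it takes $\phi=\overline\phi_0$ solving the full dual equation
\[
L^*\overline\phi_0=-\Delta\overline\phi_0-\vec U\cdot\nabla\overline\phi_0=1,
\]
so the convective term is built into the test function and one reads off $\int_\Omega|u|\le 0$ directly. Equivalently you could use the Krein--Rutman eigenfunction $\psi_1$ of $-\Delta-\vec U\cdot\nabla$ (see \Cref{t36}) in place of $\varphi_1$; with $\psi_1$ your computation becomes $\lambda_1\int_\Omega|u|\psi_1+\int_\Omega V|u|\psi_1\le 0$ with no residual drift term. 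Either fix closes the gap; as written, your choice of $\varphi_1$ does not.
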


From this, several existence and uniqueness results follow.
If the potential is ``very singular'', the condition $V u \delta \in L^1$ acts as boundary condition.
\begin{theorem}\label{t31}
Assume that $f \in L^1 (\Omega, \delta)$, \eqref{H} and $V \ge C \delta^{-2}$
for some $C > 0$. Then there exists a unique $u \in L^{n^{\prime
},\infty} (\Omega) \cap L^1 (\Omega, \delta^{-1})$ such that 
\eqref{eq:vws
Brezis} holds.
\end{theorem}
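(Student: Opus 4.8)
The plan is to derive \Cref{t31} as an essentially immediate corollary of the three preceding results (\Cref{lem:equivalence}, \Cref{thm:existence result}, \Cref{thm:uniqueness L1 Omega delta}), the only genuine work being to verify that the ``very singular'' hypothesis $V \ge C\delta^{-2}$ forces any very weak solution produced by the general existence theorem to belong to the space $L^1(\Omega,\delta^{-1})$ in which uniqueness is available. Concretely, \Cref{thm:existence result} gives a solution $u \in L^{n',\infty}(\Omega)$ of \eqref{eq:vws Brezis} together with the estimate $\int_\Omega V|u|\delta \le C_u \int_\Omega |f|\delta < \infty$. First I would insert the lower bound on $V$ into this estimate: since $V \ge C\delta^{-2}$ near $\partial\Omega$, on a boundary neighbourhood $\Omega_\eta = \{\delta < \eta\}$ we get
\begin{equation}
C\int_{\Omega_\eta} \frac{|u|}{\delta} \le \int_{\Omega_\eta} V|u|\delta \le C_u\int_\Omega |f|\delta < \infty.
\end{equation}
On the complementary set $\Omega \setminus \Omega_\eta$ we have $\delta \ge \eta$, so $\int_{\Omega\setminus\Omega_\eta} |u|/\delta \le \eta^{-1}\|u\|_{L^1(\Omega\setminus\Omega_\eta)}$, which is finite because $u \in L^{n',\infty}(\Omega) \subset L^1(\Omega)$ on the bounded set $\Omega$ (here $n' = n/(n-1) > 1$, and $L^{n',\infty} \subset L^{n',q}$ fails but $L^{n',\infty}(\Omega)\subset L^1(\Omega)$ holds for $\Omega$ of finite measure). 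Adding the two pieces yields $u \in L^1(\Omega,\delta^{-1})$.

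Having established $u \in L^{n',\infty}(\Omega)\cap L^1(\Omega,\delta^{-1})$, existence of a solution of \eqref{eq:vws Brezis} in this class is done. For uniqueness I would simply invoke \Cref{thm:uniqueness L1 Omega delta}, whose hypotheses ($f \in L^1(\Omega,\delta)$ and \eqref{H}) are exactly those assumed in \Cref{t31}; that theorem asserts there is at most one $u \in L^{n',\infty}(\Omega)\cap L^1(\Omega,\delta^{-1})$ satisfying \eqref{eq:vws Brezis}. Combining the two gives the unique solvability claimed. (If one prefers to phrase things through the distributional formulation \eqref{eq:vws distributional}, \Cref{lem:equivalence} shows that on $L^{n',\infty}(\Omega)\cap L^1(\Omega,\delta^{-1})$ the two formulations coincide, so nothing changes.)

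The only subtlety — and the step I would treat most carefully — is the passage from the weighted bound $\int_\Omega V|u|\delta < \infty$ to membership in $L^1(\Omega,\delta^{-1})$, because the inequality $V \ge C\delta^{-2}$ is only assumed to hold near $\partial\Omega$, not globally; this is why the split into $\Omega_\eta$ and $\Omega\setminus\Omega_\eta$ is needed and why one must separately use the a priori integrability $u \in L^1(\Omega)$ (valid since $\Omega$ is bounded and $L^{n',\infty}(\Omega)\hookrightarrow L^1(\Omega)$). Everything else is bookkeeping: no new estimates, no new approximation arguments, and no local Kato inequality are required here, since those heavy tools have already been deployed in proving \Cref{thm:existence result} and \Cref{thm:uniqueness L1 Omega delta}. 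I would close by remarking that this is precisely the sense in which the condition $Vu\delta \in L^1(\Omega)$ ``acts as a boundary condition'': for very singular $V$ it upgrades a mere $L^{n',\infty}$ solution to one controlled in $L^1(\Omega,\delta^{-1})$, thereby selecting the unique ``flat'' solution.
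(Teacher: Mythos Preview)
Your proposal is correct and is precisely the argument the paper intends: the paper does not give a separate explicit proof of \Cref{t31}, presenting it instead as an immediate consequence of \Cref{thm:existence result} (which supplies a solution with $\int_\Omega V|u|\delta<\infty$) and \Cref{thm:uniqueness L1 Omega delta}, once one observes that $V\ge C\delta^{-2}$ converts the bound $V|u|\delta\in L^1(\Omega)$ into $|u|/\delta\in L^1(\Omega)$. Your careful treatment of the ``near $\partial\Omega$'' qualifier via the split $\Omega_\eta\cup(\Omega\setminus\Omega_\eta)$ and the embedding $L^{n',\infty}(\Omega)\hookrightarrow L^1(\Omega)$ is in fact more detailed than what the paper writes out.
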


Better integrability of the data improves the differentiability of the solution and, in particular, the (unique) solution satisfies the Dirichlet condition in the sense that $u \in W_0^1 L^{n',\infty} (\Omega)$.
\begin{theorem} \label{c1t29}
	Assume that $f \in L^1 (\Omega)$ and \eqref{H}. Then, there exists exactly
	one $u \in L^{n^{\prime }, \infty} (\Omega) \cap L^1 (\Omega ;
	\delta^{-1})$ such that \eqref{eq:vws Brezis}. Furthermore, $u \in
	W_0^1 L^{n^{\prime },\infty} (\Omega)$ and 
	\begin{align}
	\int _ \Omega V |u| &\le C \int_ \Omega |f|,\\
	\int_ \Omega V|u| \delta &\le c_\Omega (1 + \| \vec U \|_{L^{n,1}}) \int_ \Omega |f|\delta,\\
	\| \nabla u \|_{L^{n',\infty}} &\le C \int_ \Omega |f|.
	\end{align}
\end{theorem}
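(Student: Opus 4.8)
The plan is to bootstrap from the already-established results for data in $L^1(\Omega,\delta)$ to the better situation $f\in L^1(\Omega)\subset L^1(\Omega,\delta)$ (since $\delta$ is bounded on $\Omega$), and then to extract the extra $W_0^1 L^{n',\infty}$ regularity by a linear elliptic estimate. First I would invoke \Cref{thm:existence result} to get a solution $u\in L^{n',\infty}(\Omega)$ of \eqref{eq:vws Brezis} with the bound $\int_\Omega V|u|\delta\le C_u\int_\Omega|f|\delta$. The new point is that with $f\in L^1(\Omega)$ one can improve $\int_\Omega V|u|\delta$ to $\int_\Omega V|u|$: I would redo (or rather re-read) the proof of \Cref{thm:existence result} tracking the test function. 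In that proof one tests against the solution $\phi$ of an auxiliary Dirichlet problem; when $f\in L^1(\Omega)$ the relevant auxiliary solution is bounded (by the $L^1\to L^{n',\infty}$ and then $L^{n',\infty}\hookrightarrow$ suitable estimates, together with the sign/truncation arguments) so that the weight $\delta$ disappears from the corresponding side, yielding $\int_\Omega V|u|\le C\int_\Omega|f|$. This in turn gives $u\in L^1(\Omega,\delta^{-1})$ via Hardy's inequality once we know $\nabla u\in L^{n',\infty}$, so the three assertions are coupled and must be proved together.

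The core estimate is the gradient bound $\|\nabla u\|_{L^{n',\infty}}\le C\int_\Omega|f|$. Here I would rewrite the equation as a pure Poisson-type problem: $u$ solves, in the very weak sense, $-\Delta u = f - \vec U\cdot\nabla u - Vu =: g$ with homogeneous Dirichlet data. The term $Vu\in L^1(\Omega)$ by the previous paragraph; $f\in L^1(\Omega)$; and $\vec U\cdot\nabla u$ — this is the delicate term — lies in $L^1(\Omega)$ provided $\nabla u\in L^{n',\infty}$ and $\vec U\in L^{n,1}$, by the Hölder-type pairing $L^{n,1}\cdot L^{n',\infty}\hookrightarrow L^1$. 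So $g\in L^1(\Omega)$, and the classical Stampacchia / Boccardo–Gallouët–type estimate for $-\Delta$ with $L^1$ right-hand side (which is precisely the $L^1\to W_0^1 L^{n',\infty}$ regularity, available through the rearrangement machinery of \cite{RakoJFA,DR} already cited) gives $\|\nabla u\|_{L^{n',\infty}}\le C\|g\|_{L^1}\le C(\|f\|_{L^1}+\|\vec U\|_{L^{n,1}}\|\nabla u\|_{L^{n',\infty}}+\|Vu\|_{L^1})$. Under \eqref{H}, in case \eqref{eq:H with p equal n} the norm $\|\vec U\|_{L^{n,1}}$ is small so the gradient term on the right can be absorbed; in case \eqref{eq:H with p greater n} one uses $\vec U\in L^{p,1}$ with $p>n$, which pairs with $L^{n',\infty}$ into an $L^s$ with $s>1$, giving a sub-critical term that is absorbed by interpolation (or by a continuity/approximation argument in $\Omega_\varepsilon$). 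Combining with $\|Vu\|_{L^1}\le C\|f\|_{L^1}$ closes the estimate. Then $u\in W_0^1 L^{n',\infty}(\Omega)$ by definition, Hardy's inequality gives $u/\delta\in L^1(\Omega)$ hence $u\in L^1(\Omega,\delta^{-1})$, and uniqueness is immediate from \Cref{thm:uniqueness L1 Omega delta}.

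The main obstacle I anticipate is the absorption of the convective term $\vec U\cdot\nabla u$: one must be careful that the constant in the $L^1\to W_0^1 L^{n',\infty}$ regularity estimate times the relevant norm of $\vec U$ is genuinely $<1$ (in case \eqref{eq:H with p equal n}) or else that the $p>n$ gain truly produces a strictly better Lorentz exponent that can be reabsorbed. A clean way to avoid a circular estimate is to first establish everything on the approximating domains $\Omega_\varepsilon=\{\delta>\varepsilon\}$, where $u$ is already known to be regular enough (from \Cref{thm:existence result} one has $Vu\delta\in L^1$ and local smoothing), derive the estimate with constants uniform in $\varepsilon$, and pass to the limit. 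The second delicate point is upgrading $\int_\Omega V|u|\delta$ to $\int_\Omega V|u|$, which relies on the boundedness of the auxiliary dual solution when the datum is merely $L^1$ rather than $L^1(\Omega,\delta)$; this is where the hypothesis $f\in L^1(\Omega)$ (as opposed to $L^1(\Omega,\delta)$) is actually used, and I would verify it by carefully choosing the test function in the duality argument of \Cref{thm:existence result}.
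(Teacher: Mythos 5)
Your overall architecture (existence from \Cref{thm:existence result}, uniqueness from \Cref{thm:uniqueness L1 Omega delta}, plus three a priori estimates) matches the paper's, but the mechanism you propose for the estimates has two genuine gaps. The paper derives all three bounds at the level of the approximating problems \eqref{eq31-4}, whose solutions $u_j\in W_0^{1,1}(\Omega)\cap W^2L^{p,1}(\Omega)$ are regular enough to admit nonlinear test functions: testing with $T_k(u_j)$ gives $\int_\Omega|\nabla T_k(u_j)|^2\le k\int_\Omega|f_j|$ and hence, by the B\'enilan--Boccardo--Gallou\"et-type lemma of \cite{BBGGPV}, the bound $\|\nabla u_j\|_{L^{n',\infty}}\le C\|f\|_{L^1}$; testing with $(|u_j|-t)_+\,\sign(u_j)$ and differentiating in $t$ gives $\int_{|u_j|>t}V_j|u_j|\le\int_{|u_j|>t}|f|$. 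The decisive point, which your proposal misses, is that for such test functions the convective term vanishes identically, $\int_\Omega \vec U_j\cdot\nabla u_j\,T_k(u_j)=0$, by the divergence-free condition and the zero normal trace --- so no absorption of $\vec U\cdot\nabla u$ is ever needed. Your plan to treat $\vec U\cdot\nabla u$ as part of an $L^1$ right-hand side and reabsorb $\|\vec U\|\,\|\nabla u\|_{L^{n',\infty}}$ only works under the smallness hypothesis of \eqref{eq:H with p equal n}; under \eqref{eq:H with p greater n} the norm $\|\vec U\|_{L^{p,1}}$ is arbitrary, and the ``interpolation'' you invoke still leaves a constant times a Lorentz norm of $\nabla u$ on the right with no reason for the constant to be less than one. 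Moreover the bootstrap is circular at the level of the limit solution $u$, which a priori satisfies \eqref{eq:vws Brezis} only against $W_2$ test functions; your fix via the domains $\Omega_\varepsilon=\{\delta>\varepsilon\}$ does not repair this, since on $\Omega_\varepsilon$ the solution carries no Dirichlet condition and the $L^1\to W_0^1L^{n',\infty}$ regularity estimate acquires uncontrolled boundary terms. The correct regularization is in the coefficients and data ($V_j=\min(V,j)$, $f_j$, $\vec U_j$), not in the domain.

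The second gap is the upgrade from $\int_\Omega V|u|\delta$ to $\int_\Omega V|u|$. Your description (``the auxiliary dual solution is bounded when $f\in L^1$, so the weight disappears'') is a heuristic, not an argument; in the paper this estimate is \eqref{eq:bound V omega in L n prime with L1 of f}, obtained by the $(|u_j|-t)_+\,\sign(u_j)$ test described above (essentially the Brezis--Strauss device), again admissible only for the approximate solutions $u_j$, and then passed to the limit by Fatou's lemma. Once $u_j\rightharpoonup u$ in $W_0^1L^{n',\infty}(\Omega)$ is secured, $u\in L^1(\Omega;\delta^{-1})$ follows from Hardy's inequality as you say, and uniqueness from \Cref{thm:uniqueness L1 Omega delta}; those final steps of yours are fine.
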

The intermediate cases of integrability of the datum $f$ given by the inclusions, for $\alpha \in (0,1)$,
\begin{equation}
	L^1 ( \Omega ) \subset L^1 (\Omega ; \delta ^ \alpha ) \subset L^1 (\Omega ; \delta( 1 + |\log \delta|  )) \subset L^1 (\Omega ; \delta)
\end{equation}
can also be considered. In fact, in \cite{RakoJFA} it was shown that the condition $\frac u \delta  \in L^1 (\Omega)$ is equivalent to the data been in $L^1 (\Omega ; \delta( 1 + |\log \delta|  ))$.

\begin{theorem}
	\label{thm:well posedness in L1 delta 1 + log delta} Assume that $f \in L^1
	(\Omega; \delta ( 1 + |\log \delta| ))$ and  \eqref{eq:H with p greater n}.
	Then there exists a unique $u \in L^{n^{\prime },\infty} (\Omega) $ such that \eqref{eq:vws Brezis}. Furthermore, it is in $	L^1 (\Omega; \delta^{-1})$.
\end{theorem}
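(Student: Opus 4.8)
The plan is to reduce everything, via duality, to a sharp logarithmic barrier estimate for the adjoint operator, in the spirit of \cite{RakoJFA}. Since $1+|\log\delta|\ge 1$ we have $L^1(\Omega;\delta(1+|\log\delta|))\subset L^1(\Omega;\delta)$, so \eqref{eq:H with p greater n} and $f\in L^1(\Omega;\delta)$ are in force; hence \Cref{thm:existence result} already provides \emph{some} $u\in L^{n',\infty}(\Omega)$ satisfying \eqref{eq:vws Brezis}, together with $\int_\Omega V|u|\delta\le C_u\int_\Omega|f|\delta$. The genuinely new points are: (i) \emph{every} such $u$ belongs to $L^1(\Omega;\delta^{-1})$ — this is the announced extra integrability; and (ii) this, combined with \Cref{thm:uniqueness L1 Omega delta}, upgrades uniqueness from $L^{n',\infty}(\Omega)\cap L^1(\Omega;\delta^{-1})$ to all of $L^{n',\infty}(\Omega)$, since once (i) holds any two solutions in $L^{n',\infty}(\Omega)$ of \eqref{eq:vws Brezis} lie in that smaller class and must coincide, while \Cref{lem:equivalence} makes the distributional formulation equivalent for this solution. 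Thus the whole proof rests on the a priori bound
$$ \int_\Omega\frac{|u|}{\delta}\ \le\ C\,(1+\|\vec U\|_{L^{p,1}})\int_\Omega|f|\,\delta\,(1+|\log\delta|) \qquad \text{for any } u\in L^{n',\infty}(\Omega)\text{ solving }\eqref{eq:vws Brezis}. $$

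To prove this bound I would argue by duality against $A^*\phi:=-\Delta\phi-\vec U\cdot\nabla\phi+V\phi$. For $j\in\N$, let $\phi_j$ solve $A^*\phi_j=\sign(u)\min(\delta^{-1},j)$ with $\phi_j=0$ on $\partial\Omega$; the right-hand side being bounded, $\phi_j$ is an admissible test function for \eqref{eq:vws Brezis} after the standard density step enlarging the test class beyond $W_2$ as in \cite{DGRT} (note $|\phi_j|\le C_j\delta$, so the pairing $\int_\Omega V|u|\,|\phi_j|\le C_j\int_\Omega V|u|\delta$ is finite). Testing \eqref{eq:vws Brezis} with $\phi_j$ gives $\int_\Omega |u|\min(\delta^{-1},j)=\int_\Omega f\,\phi_j$. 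If one can show the uniform pointwise bound $|\phi_j(x)|\le C(1+\|\vec U\|_{L^{p,1}})\,\omega(x)$ with $\omega(x):=\delta(x)(1+|\log\delta(x)|)$, then $\int_\Omega f\,\phi_j\le C(1+\|\vec U\|_{L^{p,1}})\int_\Omega|f|\,\omega$, and letting $j\to\infty$ (monotone convergence on the left) yields exactly the claimed estimate.

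The pointwise bound $|\phi_j|\le C\omega$ is the crux and is where \cite{RakoJFA} enters. Near $\partial\Omega$, where the boundary is $C^2$, $|\nabla\delta|=1$ a.e.\ and $\Delta\delta$ is bounded, a direct computation gives $-\Delta\omega = \delta^{-1}+O(|\log\delta|)\ge c_0\,\delta^{-1}$; away from $\partial\Omega$ one patches $\omega$ with a large multiple of the eigenfunction $\varphi_1$ (comparable to $\delta$) to get a global supersolution. Since $V\ge0$, adding $V\omega\ge0$ preserves this, and the first-order term $-\vec U\cdot\nabla(C_1\omega)$ is absorbed using $\div\vec U=0$, the hypothesis $\vec U\in L^{p,1}$ with $p>n$, and the Hardy inequality in $L^{n',\infty}$ recalled above — the mechanism already exploited in Theorem~4.1 in \cite{DGRT}. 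Then $\pm C_1\omega$ are respectively a super- and a subsolution of $A^*\phi=\sign(u)\min(\delta^{-1},j)$, and a comparison principle for $A^*$ (valid because the drift is divergence-free and $V\ge0$) gives $|\phi_j|\le C_1\omega$.

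The main obstacle is precisely this barrier/comparison step for the \emph{full} operator $-\Delta-\vec U\cdot\nabla+V$: the potential $V$ is harmless by its sign, but controlling the possibly unbounded convective term $\vec U$ inside a \emph{pointwise} (rather than merely $L^{n',\infty}$) estimate is delicate, and it is the reason assumption \eqref{eq:H with p greater n}, and not the weaker \eqref{H}, is required here. Once the logarithmic barrier estimate is in hand, the remainder is the routine approximation-and-duality scheme already used repeatedly in \cite{DGRT} and in the preceding sections of the paper.
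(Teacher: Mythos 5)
Your overall architecture (existence from \Cref{thm:existence result}, then an a priori bound $\int_\Omega|u|\delta^{-1}\le C\int_\Omega|f|\delta(1+|\log\delta|)$, then \Cref{thm:uniqueness L1 Omega delta}) matches the paper, and your barrier computation $-\Delta\bigl(\delta(1+|\log\delta|)\bigr)=\delta^{-1}+O(|\log\delta|)$ is exactly the logarithmic mechanism from \cite{RakoJFA} that drives the paper's proof. However, the way you deploy it — through the dual problem $A^*\phi_j=\sign(u)\min(\delta^{-1},j)$ plus a comparison principle for $A^*$ — has genuine gaps. First, $\phi_j$ is not an admissible test function for \eqref{eq:vws Brezis} for an \emph{arbitrary} solution $u\in L^{n',\infty}(\Omega)$: the Brezis formulation only allows $\phi\in W_2$, and the two mechanisms the paper has for enlarging the test class both fail here. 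The density argument of \Cref{thm:uniqueness L1 Omega delta} extends the class to $W^2L^{n,1}(\Omega)\cap H_0^1(\Omega)$ by pairing $L^*\phi\in L^{n,1}$ against $u\in L^{n',\infty}$, but your $\phi_j$ satisfies $-\Delta\phi_j=\sign(u)\min(\delta^{-1},j)+\vec U\cdot\nabla\phi_j-V\phi_j$ with $V\phi_j\sim\delta^{-2}\cdot\delta=\delta^{-1}\notin L^{n,1}(\Omega)$ when $V$ is very singular, so $\phi_j\notin W^2L^{n,1}(\Omega)$. The other mechanism, \Cref{t3}/\Cref{lem:equivalence}, requires $u/\delta\in L^1(\Omega)$ — precisely what you are trying to prove — so invoking it here is circular. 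Second, the asserted comparison principle $|\phi_j|\le C_1\omega$ for very weak solutions of the full operator $-\Delta-\vec U\cdot\nabla+V$ with $V\in L^1_{loc}$ singular is not free: uniqueness/comparison for this operator class without classical boundary conditions is the content of \Cref{t4} and Section 6, and it again needs $\phi_j/\delta\in L^1$-type information. Even the well-posedness of your dual problem is of the same difficulty as the theorem itself. Consequently your claim that the bound holds for \emph{every} $u\in L^{n',\infty}(\Omega)$ solving \eqref{eq:vws Brezis} is not established.

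The paper avoids all of this by never solving a dual problem: it plugs the \emph{explicit} test function $\phi=\varphi_1\log(\varphi_1+\varepsilon)$, which belongs to $W_2$ for every $\varepsilon>0$ (so no enlargement of the test class is needed), directly into \eqref{eq:vws Brezis}. The singular term it produces, $\int_\Omega|\nabla\varphi_1|^2\,u/(\varphi_1+\varepsilon)\,dx$, plays the role of your $\int_\Omega|u|\delta^{-1}$, the convective term is controlled by $\||\vec U|\log\delta\|_{L^{n,1}}\le c\|\vec U\|_{L^{p,1}}$ paired with $\|u\|_{L^{n',\infty}}$ (this is where \eqref{eq:H with p greater n} is really used), and one lets $\varepsilon\to0$. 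Since this argument needs a sign ($u\ge0$) to convert the inequality into an estimate on $\int|u|/\delta$, the paper first treats $f\ge0$ (the constructed solution is then nonnegative by \Cref{thm:existence result}), and handles general $f$ by the decomposition $f=f_+-f_-$ and linearity, before concluding with \Cref{thm:uniqueness L1 Omega delta}. If you want to keep your write-up, the fix is to drop the dual problem entirely and use your barrier $\omega$ (suitably regularized into $W_2$, e.g.\ via $\varphi_1\log(\varphi_1+\varepsilon)$) as the test function itself, together with the $f=f_+-f_-$ splitting.
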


When we improve the integrability of $f$ near $\partial \Omega$ we can relax the conditions on $\vec U$.

\begin{theorem}\label{t34}
	Let $0 < \alpha < 1$. Assume that \eqref{eq:H with p greater n}, $f \in
	L^1(\Omega, \delta^\alpha)$ and $\vec U \in L^{\frac n {1-\alpha}} (\Omega)$%
	. Then, there exists a unique solution $u \in L^{n^{\prime },\infty} (\Omega)$ of \eqref{eq:vws Brezis}. Moreover, it is in $
	L^1 (\Omega; \delta^{-1})$. Furthermore,  $u \in W_0^1 L^{ \frac n {n+1 +
	\alpha}}$ and 
	\begin{equation}
		\int_ \Omega V|u|\delta^\alpha \le \int_{\Omega} |f|\delta^ \alpha.
	\end{equation}
\end{theorem}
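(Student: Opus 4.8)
The plan is to obtain existence, uniqueness, and the additional regularity by reducing everything to the machinery already established for $f \in L^1(\Omega,\delta)$, while exploiting the better decay of $f$ near $\partial\Omega$ and the sharper integrability $\vec U \in L^{n/(1-\alpha)}$ to upgrade the estimates. First I would observe that the hypotheses imply $f \in L^1(\Omega,\delta)$ (since $\delta^\alpha \ge c\,\delta$ on a bounded domain), so \Cref{thm:existence result} already furnishes a solution $u \in L^{n',\infty}(\Omega)$ of \eqref{eq:vws Brezis} with $Vu\delta \in L^1(\Omega)$; moreover $V \ge C\delta^{-2}$ is not assumed here, so I would \emph{not} invoke \Cref{t31} directly but instead keep track of the weighted estimate. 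The key new quantitative input is an a priori bound of the form $\int_\Omega V|u|\delta^\alpha \le \int_\Omega |f|\delta^\alpha$: this should come from testing the equation (at the approximate level, where the construction in \Cref{thm:existence result} produces solutions of regularized problems) against a truncation of $\sign(u)\,\phi_1^{?}\delta^{\alpha-?}$ — more precisely against $T_k(u)$ multiplied by a suitable power of $\delta$ (or of $\varphi_1$) so that, after integration by parts, the Laplacian term contributes a nonnegative principal part plus lower-order terms controlled by $\|\vec U\|_{L^{n/(1-\alpha)}}$ through a Hölder/Sobolev argument, and passing $k\to\infty$ and removing the regularization yields the stated inequality.

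Next, to get $u \in L^1(\Omega;\delta^{-1})$, I would combine the weighted bound just obtained with the local Kato-type inequality announced for \Cref{sec:6} (the tool behind \Cref{thm:uniqueness L1 Omega delta}): the point is that once $V u\delta^\alpha \in L^1$ and $\alpha < 1$, one has enough control near $\partial\Omega$ to run the comparison/duality argument of \Cref{thm:uniqueness L1 Omega delta} and conclude both that $u/\delta \in L^1(\Omega)$ and that $u$ is the unique solution in $L^{n',\infty}(\Omega)$. Concretely, I would show $\frac{u}{\delta} \in L^1(\Omega)$ by testing against the solution $\zeta$ of an auxiliary dual problem $-\Delta\zeta - \vec U\cdot\nabla\zeta + V\zeta = \sign(u)\delta^{-1}\chi_{\{\delta>\eps\}}$ (whose solvability and the bound $\|\zeta\|_\infty \le C$, $\|\zeta/\delta^{?}\|\le C$ follow from the already-proven existence theory applied to a bounded right-hand side), obtaining $\int_{\{\delta>\eps\}} |u|\delta^{-1} \le \int_\Omega |f||\zeta|$ uniformly in $\eps$; since the right-hand side is finite because $\zeta$ is bounded and $f\in L^1(\Omega)\subset L^1(\Omega)$ — wait, here $f$ is only in $L^1(\Omega,\delta^\alpha)$, so I would instead need $|\zeta|\le C\delta^{\beta}$ for some $\beta \ge 1-\alpha$ near $\partial\Omega$, which is exactly the regularity that $\vec U \in L^{n/(1-\alpha)}$ buys via the Hardy-type inequality quoted in \Cref{sec:2}. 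Uniqueness then follows since any two solutions in $L^{n',\infty}$ are in $L^1(\Omega;\delta^{-1})$ by this argument, and \Cref{thm:uniqueness L1 Omega delta} applies.

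Finally, for the Sobolev regularity $u \in W_0^1 L^{n/(n+1+\alpha)}(\Omega)$, I would revisit the a priori estimates on the regularized solutions $u_k$: one tests against $u_k$ itself (or a truncation) and uses that $-\Delta u_k$ is bounded in a weighted $L^1$ space, then invokes the regularity results of \cite{RakoJFA}/\cite{DR} for the Laplacian with right-hand side in $L^1(\Omega;\delta^\alpha)$, which give $\nabla u_k$ bounded in the Lorentz–Sobolev scale with exponent $\frac{n}{n+1+\alpha}$ — the shift by $1+\alpha$ in the denominator being precisely the gain from the $\delta^\alpha$ weight compared with the $L^1$ case (which would give $W_0^1 L^{n/(n+1)}$, cf. \Cref{c1t29}). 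A weak-lower-semicontinuity / closure argument passes this to $u$, and membership in $W_0^{1,1}(\Omega)$ (hence the subscript $0$) comes from the fact that the approximants can be taken with zero trace. The main obstacle I anticipate is the bookkeeping in the weighted test-function computation: choosing the correct power of $\delta$ (or $\varphi_1$) so that the convective term $\int \vec U\cdot\nabla u\, (\text{test})$ is absorbed — this requires matching the integrability $L^{n/(1-\alpha)}$ of $\vec U$ against the Sobolev exponent of $\nabla u$ and the weight exponent, and near $\partial\Omega$ one must be careful that $\delta^{\alpha-1}$ is not integrable, so the truncation/approximation order of limits (remove regularization first, or let the truncation level go to infinity first) matters and must be justified by a dominated-convergence argument using the uniform weighted bound $\int_\Omega V|u_k|\delta^\alpha \le \int_\Omega |f|\delta^\alpha$.
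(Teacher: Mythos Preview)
Your proposal takes a substantially harder route than the paper, and two of your three main steps have real gaps that the paper's argument sidesteps entirely.

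\textbf{Existence, uniqueness, and $u\in L^1(\Omega;\delta^{-1})$.} You try to bootstrap from \Cref{thm:existence result} and then recover $u/\delta\in L^1$ by a duality argument with an auxiliary $\zeta$. As you yourself notice mid-sketch, this requires a uniform boundary decay $|\zeta|\le C\delta^{\beta}$ with $\beta\ge 1-\alpha$ that you do not justify (and which is not free, since the right-hand side of your $\zeta$-equation blows up as $\eps\to 0$). The paper avoids all of this: since $0<\alpha<1$ implies $L^1(\Omega;\delta^\alpha)\subset L^1(\Omega;\delta(1+|\log\delta|))$, one simply invokes \Cref{thm:well posedness in L1 delta 1 + log delta}, which already delivers existence, uniqueness in $L^{n',\infty}$, and $u/\delta\in L^1$ in one stroke. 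You never use this inclusion, and that is the missing idea.

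\textbf{The weighted estimate $\int V|u|\delta^\alpha\le c\int|f|\delta^\alpha$.} Your plan is to test directly against something like $T_k(u)$ times a power of $\delta$ (or $\varphi_1$), and you correctly flag that the resulting term with $\delta^{\alpha-1}$ is non-integrable near $\partial\Omega$ and makes the order of limits delicate. The paper bypasses this obstacle by \emph{interpolation} (Lemma~\ref{l3}): the linear map $T_k f = V_k\,\widetilde u_{kf}$ is bounded $L^1(\Omega)\to L^1(\Omega)$ (from \Cref{t12}) and $L^1(\Omega;\delta)\to L^1(\Omega;\delta)$ (from \Cref{t11}), and since $L^1(\Omega;\delta^\alpha)=(L^1(\Omega;\delta),L^1(\Omega))_{\alpha,1}$, Marcinkiewicz interpolation gives boundedness on $L^1(\Omega;\delta^\alpha)$ with constants uniform in $k$; passing $k\to\infty$ and then $j\to\infty$ yields the estimate. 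No singular test function ever appears.

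\textbf{Sobolev regularity.} Here your idea is close to the paper's: once $Vu\in L^1(\Omega;\delta^\alpha)$, set $g=f-Vu\in L^1(\Omega;\delta^\alpha)$ and view $u$ as the very weak solution of $-\Delta u+\vec U\cdot\nabla u=g$; then apply the ready-made regularity result (Lemma~\ref{l4}, i.e.\ Theorem~13 of \cite{DGRT}), which uses exactly the hypothesis $\vec U\in L^{n/(1-\alpha)}$, to get $u\in W_0^1 L^{n/(n-1+\alpha)}$. (Note the exponent in Lemma~\ref{l4} is $n/(n-1+\alpha)$; your heuristic ``shift by $1+\alpha$'' giving $n/(n+1+\alpha)$ follows a typo in the theorem statement.) No fresh a~priori estimate on $\nabla u_k$ is needed.
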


\section{Proof of the \nameref{lem:equivalence}} \label{sec:4}

The proof is based on the following lemma, which improves \cite{DGRT}. The idea is how well we can approximate a test function $\phi \in W_2$ by functions $\phi_j \in \mathcal C_c ^\infty$. In \cite{DGRT} our approximation was that, for $r > 1$, we can have the convergence of derivatives: $\delta^r D^\alpha \phi_j \to \delta^r D^\alpha \phi$ in $L^\infty$ for $|\alpha|\le 2$ (although this idea is older, see, e.g., Theorem 9.17 in \cite{Brezis libro}). Our improvement here is that, for $r = 1$, we can obtain the same approximation in $L^\infty$-weak-$\star$.  

\begin{lemma}[Approximation of test functions in $W_2$]\label{t3}
Let $\phi \in W_2$. Then, there exists a sequence $\phi_j \in C_c^\infty
(\Omega)$ such that

\begin{enumerate}

\item There exists $C > 0$ such that $\|\nabla \phi_j
\|_{L^\infty} \le C$ for all $j \ge 1$. 

\item $\| \phi_j - \phi \|_{L^\infty} + \| \nabla \phi_j - \nabla
\phi \|_{L^1} \to 0$. 

\item $\delta \Delta \phi_j \rightharpoonup \delta \Delta \phi$ in $%
L^\infty$-weak-$\star$. 

\item $\frac{\phi_j}{\delta }\rightharpoonup\frac{\phi}{\delta }$ in 
$L^\infty$-weak-$\star$. 
\end{enumerate}
\end{lemma}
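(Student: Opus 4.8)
The plan is to construct $\phi_j$ by a boundary-collar cutoff-and-regularize procedure. First I would fix a smooth function $\zeta: [0,\infty) \to [0,1]$ with $\zeta \equiv 0$ on $[0,1]$, $\zeta \equiv 1$ on $[2,\infty)$, and set $\zeta_j(x) = \zeta(j\,\delta(x))$; since $\delta$ is Lipschitz with $|\nabla \delta| = 1$ a.e., we get $|\nabla \zeta_j| \le C j \mathbf 1_{\{1/j < \delta < 2/j\}}$. The first natural candidate is $\phi \zeta_j$, which is compactly supported in $\Omega$ but only Lipschitz; a second mollification at scale $\eps_j \ll 1/j$ then lands us in $C_c^\infty(\Omega)$ without spoiling any of the estimates, so I will suppress the mollification in what follows and argue with $\phi \zeta_j$. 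Because $\phi \in W_2$ vanishes on $\partial\Omega$, near the boundary $|\phi| \le C\delta$ and $|\nabla \phi| \le C$; the key pointwise bound is therefore $|\nabla(\phi\zeta_j)| \le |\nabla\phi|\,\zeta_j + |\phi|\,|\nabla\zeta_j| \le C + C\delta \cdot Cj \mathbf 1_{\{\delta < 2/j\}} \le C'$, which gives item (1). Item (2) then follows since $\phi\zeta_j \to \phi$ uniformly (the difference is supported in $\{\delta < 2/j\}$ where $|\phi| \le 2C/j$) and $\nabla(\phi\zeta_j) - \nabla\phi = \nabla\phi(\zeta_j - 1) + \phi\nabla\zeta_j$, whose $L^1$ norm is bounded by $\|\nabla\phi\|_{L^1(\{\delta<2/j\})} + \|\phi\nabla\zeta_j\|_{L^1}$, and $\int |\phi||\nabla\zeta_j| \le Cj \cdot \frac{C}{j} \cdot |\{1/j < \delta < 2/j\}| \to 0$ by absolute continuity of the Lebesgue measure of the collar.

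For item (4), note $\frac{\phi\zeta_j}{\delta}$ is uniformly bounded in $L^\infty$ (again $|\phi| \le C\delta$ near $\partial\Omega$), and it converges to $\frac\phi\delta$ pointwise a.e.\ in $\Omega$; since bounded pointwise a.e.\ convergence implies $L^\infty$-weak-$\star$ convergence against $L^1$ test functions (dominated convergence), this is immediate. Item (3) is the delicate one. We compute $\delta\Delta(\phi\zeta_j) = \delta\zeta_j\Delta\phi + 2\delta\nabla\phi\cdot\nabla\zeta_j + \delta\phi\Delta\zeta_j$. The first term converges to $\delta\Delta\phi$ in $L^\infty$-weak-$\star$ by the same bounded-a.e.-convergence argument as for (4) (here we use $\Delta\phi \in L^\infty$ since $\phi \in C^2(\overline\Omega)$, so $\delta\zeta_j\Delta\phi$ is uniformly bounded). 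The two remaining terms are the main obstacle: they are \emph{not} uniformly bounded in $L^\infty$ — the middle term is $O(j\delta\mathbf 1_{\{\delta\sim 1/j\}}) = O(1)$, actually bounded, but the last term $\delta\phi\Delta\zeta_j$ carries $\Delta\zeta_j = O(j^2)$ on the collar and $\delta\phi = O(1/j^2)$ there, so it is only $O(1)$ in $L^\infty$, not small. Hence we cannot get (3) by a naive norm estimate; we must show these two terms tend to $0$ in the weak-$\star$ sense, i.e.\ $\int_\Omega g\,(2\delta\nabla\phi\cdot\nabla\zeta_j + \delta\phi\Delta\zeta_j)\,dx \to 0$ for every $g \in L^1(\Omega)$.

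The way I would handle this is to integrate by parts to move the derivative off $\zeta_j$: writing $\delta\phi\Delta\zeta_j = \diver(\delta\phi\nabla\zeta_j) - \nabla(\delta\phi)\cdot\nabla\zeta_j$ and combining, one sees that the genuinely dangerous piece, after integrating against a smooth $g$ first and then approximating a general $g \in L^1$, reduces to integrals of the form $\int (\text{bounded}) \cdot (\text{quantity supported on } \{1/j<\delta<2/j\})$, whose measure goes to $0$; the boundary terms from integration by parts vanish because $\phi\zeta_j$ and its gradient are compactly supported. More carefully, since $\|\delta\Delta(\phi\zeta_j)\|_{L^\infty}$ is uniformly bounded (all three terms are $O(1)$ as noted), it suffices to test against a dense subclass, say $g \in C_c^\infty(\Omega)$, and for such $g$ the collar-supported terms satisfy $|\int g(\cdots)| \le \|g\|_\infty \cdot C \cdot |\{1/j < \delta < 2/j\}| \to 0$ directly — no integration by parts is even needed once $g$ is bounded, because the \emph{integrands} are $O(1)$ and the \emph{supports} shrink to measure zero. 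This uniform-boundedness-plus-density reduction is exactly the $L^\infty$-weak-$\star$ improvement over the $r>1$ case in \cite{DGRT}, where one had genuine $L^\infty$ convergence. Finally, I would reinstate the mollification: choosing $\eps_j$ small enough that $\|\phi\zeta_j * \rho_{\eps_j} - \phi\zeta_j\|_{C^1} < 1/j$ and $\eps_j < \frac12\operatorname{dist}(\supp(\phi\zeta_j),\partial\Omega)$ preserves (1) and (2) exactly, and for (3)–(4) one checks the mollified quantities stay uniformly bounded in $L^\infty$ and converge a.e., closing the argument.
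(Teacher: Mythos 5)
Your proposal is correct and follows essentially the same route as the paper: a cutoff $\zeta(j\delta)$ supported away from a boundary collar of width $O(1/j)$, the product-rule decomposition of $\delta\Delta(\phi\zeta_j)$, the observation that the commutator terms are uniformly bounded in $L^\infty$ while supported on a set of vanishing measure (hence converge weak-$\star$ by testing against a dense class), and a final mollification to pass from $C_c^2$ to $C_c^\infty$. The only difference is cosmetic (the paper writes the cutoff as $h((\delta-\varepsilon)/\varepsilon)$ and records the bounds $\delta|\nabla h_j|\le C$, $\delta^2|\Delta h_j|\le C$ explicitly), so no further comment is needed.
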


\begin{proof}
	Following \cite{DGRT}, we shall consider $h\in C^\infty(\R)$ such that 
	$$
		h(t)= 
		\begin{cases} 
			1&\hbox{if\ }t\GEQ2,\\
			0&\hbox{if\ }t\LEQ1,
		\end{cases}
	$$ 
	for $j\in\N^*$ set $\eps=\frac 1j$ and let $h_j(x)=h\left(\frac{\d(x)-\eps}\eps\right),\ x\in\O$. Setting 
	$$
		E_j=\Big\{x\in\O:\dfrac2j\LEQ\d(x)\LEQ\dfrac3j\Big\},
		\qquad  
		E^c_j=\O\backslash E_j.
	$$
	One has the following properties of $h_j$:
	\begin{enumerate}
		\item $\Delta h_j(x)=|\nabla h_j(x)|=0 $ 
		for $x\in E^c_j$,
		\item $\DST h_j(x)\to 1$ as $j \to + \infty$, for any $x\in\O$ (since $h_j(x)=1$ if $\d(x)\GEQ\frac3j$),
		\item $\|\d h_j-\d\|_\infty=\max_{x\in\OV\O}|\d(x)h_j(x)-\d(x)|\LEQ 3(1+\|h'\|_\infty)\eps,$
		\item $\d(x)|\nabla h_j(x)|\LEQ 3\|h'\|_\infty$ and  $\d^2(x) |\Delta h_j(x)|\LEQ c_{h}$  on $\O$, where $c_h$  is constant (depending only on $h$ and $\O$).
	\end{enumerate}
	Let $\phi\in W_2$, the sequence $\f_j =h_j\phi$ is in $C^2_c(\O)$ and enjoy the following property,
	\begin{equation}\label{eq4}
	\hbox{ there is a constant $c>0$ such } \|\nabla\f_j\|_\infty\LEQ c\|\nabla\phi\|_\infty.
	\end{equation}
	Indeed
	\def\meas{{\rm meas\,}}
	$$|\nabla \f_j(x)|\LEQ3\|h'\|_\infty\|\nabla\phi\|_\infty+\|h\|_\infty\|\nabla\phi\|_\infty.$$
	Moreover, one has
	\begin{align}
		\label{eq600}
		\|h_j\phi-\phi\|_{\infty} &\LEQ c\eps \|\nabla\phi \|_\infty,\\
		\label{eq601}
			\int_\O|\nabla\f_j(x)-\nabla\phi(x)|dx &\LEQ c\,\meas \Big\{x\in\O: \d(x)\LEQ\dfrac3j\Big\} \xrightarrow[j\to+\infty]{}0,\\
		\label{eq5}
		|\d(x)\Delta\f_j(x)-\d(x)\Delta\phi(x)| 
		&\LEQ\|\d h_j-\d\|_\infty|\Delta\phi(x)|\hbox{ for }x\in E_j^c.
	\end{align}
	For $x\in E_j$, we have
	\begin{align}
	|\d(x)\Delta\f_j(x)-\d\Delta\phi(x)| &\LEQ \|\d h_j-\d \|_\infty |\Delta\phi(x)|
	+\d^2(x)\|\nabla\phi\|_\infty |\Delta h_j(x)| \nonumber \\
	&\quad +2\d(x)|\nabla h_j(x)\||\nabla\phi\|_\infty.\label{eq6}
	\end{align}
	The statements  (\ref{eq5}) and (\ref{eq6})  are obtained with a straightforward computation. From those statements, we deduce that  there is a constant $c_\phi>0$ such that
	\begin{equation}\label{eq7}
	\|\d\Delta\f_j-\d\Delta\phi\|_\infty\LEQ c_\phi.
	\end{equation} 
	Since $$\meas(E_j)\xrightarrow[j\to+\infty]{}0\hbox{ and }\|\d h_j-\d\|_\infty\xrightarrow[j\to+\infty]{}0$$
	we have
	\begin{eqnarray}\label{eq9}
	\int_\O|\d(x)\Delta\f_j (x)-\d(x)\Delta\phi (x)| dx
	&\LEQ&\int_{E^c_j}|\d(x)\Delta\f_j(x)-\d(x)\Delta\phi(x)|dx+c_\phi\meas(E_j)\nonumber\\
	&\LEQ&\|\d h_j-\d\|_\infty\|\Delta\phi\|_\infty+c_\phi\meas(E_j)\xrightarrow[j\to+\infty]{}0. \label{eq8}
	\end{eqnarray}
	One deduces from relations (\ref{eq7}) and (\ref{eq8}) that 
	$$ 
		\d\Delta\f_j \rightharpoonup \d\Delta\phi \quad 
		\textrm{ weakly-$\star$ in $L^\infty(\O)$.}
	$$
	Since  $C_c^\infty(\O)$ is dense in $C^2_c(\O)$, we obtain the desired result.
\end{proof}

	With this technique we can now move the proof of the equivalence.

\begin{proof}[Proof of \Cref{lem:equivalence}]
	Let $\phi$ be in $W_2$. Then, we have a sequence $\phi_j\in C^\infty_c(\O)$ with the convergence stated in \Cref{t3} such that
	\begin{equation}\label{eq300}
	\int_\O\g\Big[-\Delta\phi_j+\vec U\cdot\nabla\phi_j+V\,\phi_j\Big]dx=\int_\O f\,\phi_j dx.
	\end{equation}
	Therefore, we have
	\begin{equation}\label{eq301}
	\lim_{j\to+\infty}\int_\O\g \Delta\phi_jdx=\lim_j\int_\O\dfrac\g\d(\d\Delta\phi_j)dx=\int_\O\g\Delta\phi \,dx,
	\end{equation}
	since $\dfrac\g\d\in L^1(\O)$ and $\DST\d\Delta\phi_j {\rightharpoonup} \d\Delta\phi$ in $L^\infty(\O)$-weak-$\star$ as $j \to \infty$.\\
		
	For the same reason, one has:
	$$
		\lim_j\int_\O\g\,\vec U \cdot\nabla \phi_j dx=\int_\O\g\,\vec U \cdot\nabla \phi dx
	$$
	since $u \vec U\in L^1$ and $\nabla\phi_j\rightharpoonup\nabla\phi$ in $L^\infty$-weak-$\star$. Moreover,
	$$\lim\int_\O\g\,V\phi_jdx=\int_\O\g\,V\phi dx$$
	(since $V\,\g\d\in L^1(\O)$ and $\frac{\phi_j}\d\rightharpoonup\frac\phi\d$  in $L^\infty(\O)$-weak-$\star$). 
	We easily pass to the limit in equation (\ref{eq300}) and thus $u$ satisfies \eqref{eq:vws Brezis}.
\end{proof}

\section{Proof of the existence and regularity results}\label{sec:5}
We will consider the approximating sequence 
\begin{equation}\label{eq31-4}
\begin{cases}
-\Delta\g_j+\vec U_j\cdot\nabla \g_j+V_j\g_j=f_j\\
\g_j\in W_0^{1,1}(\O)\cap W^2L^{p,1}(\O)\end{cases}
\end{equation}
i.e. 
\begin{equation}  \label{eq:weak formulation of approximating problem}
\int_ \Omega u_j (-\Delta \varphi - \vec U_j \cdot \nabla \varphi + V_j
\varphi) = \int_ \Omega f_j \varphi \qquad \forall \varphi \in W_2 .
\end{equation}
where 
\begin{align}
V_j (x) &= \min ( V(x), j ) , \\
f_j (x) &= \sign (f(x)) \min (|f(x)|, j)
\end{align}
and $\vec U_j \in \mathcal{C}_c^\infty (\Omega)^n$, such that \eqref{eq:incompressible strong} and 
\begin{equation}
\vec U_j \to \vec U \text{ in } L^{p,1} (\Omega)^n.
\end{equation}

First we recall our result in \cite{DGRT} about the approximation of solutions

\begin{theorem}[existence and approximation of solutions when $f \in L^1 (\Omega; \protect%
\delta)$] \label{t11}
	\label{prop:approximation of solution L1 omega delta} Assume $f \in L^1
	(\Omega, \delta)$ and \eqref{H}. Then, there is a unique solution $u_j
	\in W_0^{1,1} (\Omega) \cap W^2 L^{p,1} (\Omega) $ of 
	\eqref{eq:weak
	formulation of approximating problem} and there exists $u$ such that:
	
	\begin{enumerate}
	\item {$u$ is a solution of \eqref{eq:vws distributional} },
	
	\item {$u_j \to u$ a.e. in $\Omega$, }
	
	\item {$u_j \rightharpoonup u$ in $L^{n^{\prime }, \infty}$%
	-weak-$\star$ and $W^{1,q} (\Omega, \delta)$-weak, for $q < n'$},
	
	\item {$u_j \to u$ in $L^r (\Omega)$ for $r < n^{\prime }$, 
	}
	
	\item $u_j \vec U_j \to u \vec U$ in $L^1(\Omega)^n$,
	\item $\int_\O V_j|\g_j|\d dx\LEQ c (1+\|\vec U_j\|_{L^{n,1}})\int_\O|f_j|\d dx$,
	\item $V_j\g_j\d\rightharpoonup V\,\g\d$ weakly in $L^1_{loc}(\O)$.
	\end{enumerate}
\end{theorem}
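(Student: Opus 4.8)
The plan is to analyze the linear approximating problems \eqref{eq:weak formulation of approximating problem} by first establishing well-posedness for each fixed $j$ and then extracting uniform estimates that permit passing to the limit. For existence and uniqueness of $u_j$, I would note that for fixed $j$ the potential $V_j$ is bounded, $f_j \in L^\infty(\Omega) \subset L^{p,1}(\Omega)$ and $\vec U_j \in \mathcal C_c^\infty(\Omega)^n$ satisfies \eqref{eq:incompressible strong}; this is a classical linear elliptic problem with smooth incompressible drift, so standard $L^p$-theory (or the results invoked from \cite{DGRT}) gives a unique $u_j \in W_0^{1,1}(\Omega) \cap W^2 L^{p,1}(\Omega)$ solving the weak formulation for all test functions in $W_2$. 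The incompressibility condition \eqref{eq:incompressible u weak} ensures the convective term $\int_\Omega \vec U_j \cdot \nabla u_j \, u_j = 0$ when testing against $u_j$ itself (or suitable truncations), which is what makes the a priori estimates behave as in the $\vec U = 0$ case.

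The core of the proof is the collection of uniform-in-$j$ estimates. I would proceed as follows. First, testing the equation against (a regularization of) $\operatorname{sign}(u_j)\,\varphi_1$, using $-\Delta\varphi_1 = \lambda_1\varphi_1 \ge 0$, $V_j \ge 0$, the incompressibility of $\vec U_j$, and $\varphi_1 \asymp \delta$ near $\partial\Omega$ by Hopf's lemma, yields the bound $\int_\Omega V_j|u_j|\delta \le c(1+\|\vec U_j\|_{L^{n,1}})\int_\Omega|f_j|\delta$, which is item (6); this is also the source of the uniform $L^1(\Omega;\delta)$ bound on $V_j u_j$. Second, a duality/rearrangement argument against the Hardy inequality in $L^{n',\infty}$ (as in \cite{DGRT,RakoJFA}) gives the uniform bound $\|u_j\|_{L^{n',\infty}} \le C\|f\|_{L^1(\Omega;\delta)}$, and similarly uniform control of $\|\nabla u_j\|_{L^q(\Omega;\delta)}$ for $q<n'$. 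These bounds provide, via Banach–Alaoglu and the compactness of $L^{n',\infty} \hookrightarrow L^r$ for $r<n'$ (on sets of finite measure, using the equi-integrability coming from the Lorentz bound), a subsequence and a limit $u$ with $u_j \rightharpoonup u$ in $L^{n',\infty}$-weak-$\star$ and in $W^{1,q}(\Omega;\delta)$-weak, $u_j \to u$ in $L^r(\Omega)$ and a.e. (items (2), (3), (4)). Third, from $\vec U_j \to \vec U$ in $L^{p,1}$ together with the $L^{n',\infty}$ bound on $u_j$ and Hölder's inequality in Lorentz spaces ($\frac1{n'} + \frac1p < 1$ when $p>n$; the small-norm hypothesis handling the borderline $p=n$ case), one gets $u_j\vec U_j \to u\vec U$ in $L^1(\Omega)^n$ (item (5)). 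Fourth, passing to the limit in \eqref{eq:weak formulation of approximating problem} against a fixed $\phi\in\mathcal C_c^2(\Omega)$: the Laplacian term converges by weak-$\star$ convergence of $u_j$ against $\Delta\phi\in L^{(n',\infty)'}$, the convective term by item (5), and the potential term $\int_\Omega V_j u_j\phi$ requires showing $V_ju_j \to Vu$ in $L^1_{loc}(\Omega)$ — on a fixed compact $K\Subset\Omega$ one has $V_j\to V$ in $L^1(K)$ (dominated convergence) and $u_j\to u$ uniformly-enough via the $L^r$ convergence, but more carefully one uses the uniform $L^1(\Omega;\delta)$ bound from item (6) plus a.e. convergence and a Vitali/equi-integrability argument to get item (7), weak $L^1_{loc}$ convergence of $V_ju_j\delta$. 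That delivers \eqref{eq:vws distributional}, item (1). Uniqueness of the full-problem solution here is not claimed beyond that of $u_j$; it is \Cref{t28}/\Cref{thm:uniqueness L1 Omega delta} that handles uniqueness of $u$.

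The main obstacle is item (7): identifying the weak $L^1_{loc}$ limit of $V_j u_j \delta$ as $V u \delta$. The difficulty is that $V_j u_j$ is a product of two sequences converging only weakly (or only a.e. without uniform integrability away from what item (6) gives), and $V$ itself is merely $L^1_{loc}$, so one cannot simply invoke strong-times-weak convergence. The resolution is to exploit the a.e. convergence $u_j \to u$ together with the \emph{uniform} estimate in item (6): since $\{V_j|u_j|\delta\}$ is bounded in $L^1(\Omega)$ and, on any compact $K\Subset\Omega$, is controlled by $V_j|u_j| \le (\sup_K V)\,|u_j|$ with $\{u_j\}$ equi-integrable on $K$ (from the $L^r$, $r<n'$, bound), a generalized dominated convergence / Vitali argument pins down the limit. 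One must be somewhat careful that the test functions $\phi\in\mathcal C_c^2(\Omega)$ are compactly supported so that only $L^1_{loc}$ behavior of $V$ is ever used, while the global $\int_\Omega V|u|\delta < \infty$ conclusion of item (6) comes separately by Fatou from the uniform bound. A secondary technical point is justifying the test-function manipulations against $\operatorname{sign}(u_j)\varphi_1$ and truncations, which requires an approximation of $\operatorname{sign}$ by smooth monotone functions and Kato-type inequalities — but since $u_j\in W^2L^{p,1}(\Omega)$ is genuinely regular, this is routine and not the crux.
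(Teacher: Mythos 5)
You should first note that the paper itself gives no proof of this statement: \Cref{t11} is explicitly ``recalled'' from \cite{DGRT}, so there is no in-paper argument to compare against, and your outline has to be judged against the standard strategy of that reference. At the level of strategy your plan is the right one (well-posedness of the truncated problems, the weighted estimate (6) via $\sign(u_j)\varphi_1$, uniform $L^{n',\infty}$ and weighted gradient bounds, compactness, and passage to the limit term by term). However, two of the steps as written would fail.

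The main gap is in your treatment of item (7), which you correctly identify as the crux but then resolve incorrectly. You write that on a compact $K\Subset\Omega$ one has $V_j|u_j|\le(\sup_K V)\,|u_j|$ with $\{u_j\}$ equi-integrable on $K$. But $V$ is only assumed to be in $L^1_{loc}(\Omega)$, so $\sup_K V$ may well be infinite; local boundedness of $V$ is not part of the hypotheses, and the whole point of the theorem is to handle potentials that are merely locally integrable. Your fallback, ``the uniform $L^1(\Omega;\delta)$ bound from item (6) plus a.e.\ convergence,'' is also insufficient: an $L^1$-bounded sequence converging a.e.\ need not converge weakly in $L^1$ (concentration). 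What is actually needed is equi-integrability of $\{V_j|u_j|\delta\}$, and the device that delivers it is a level-set estimate of the type $\int_{\{|u_j|>t\}}V_j|u_j|\delta\le\int_{\{|u_j|>t\}}|f_j|\delta+(\text{drift terms})$, obtained by testing with $(|u_j|-t)_+\sign(u_j)$ times $\varphi_1$ (the unweighted version appears as \eqref{eq18} in the proof of \Cref{t12}); combined with the uniform decay of $|\{|u_j|>t\}|$ coming from the $L^{n',\infty}$ bound and the equi-integrability of $|f|\delta$, this yields the missing uniform integrability, and then Vitali's theorem identifies the limit. A secondary, smaller error: the embedding $L^{n',\infty}(\Omega)\hookrightarrow L^r(\Omega)$ for $r<n'$ is continuous but not compact, so it cannot be the source of the strong $L^r$ and a.e.\ convergence in items (2) and (4); that compactness must come from the uniform bound on $\nabla u_j$ in $L^q(\Omega;\delta)$ together with Rellich--Kondrachov on compact subsets (an ingredient you do list, but do not invoke where it is needed).
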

We can make some additional estimates if we restrict the set of datum $f$ to $L^1 (\Omega)$:
\begin{proposition}[existence of solutions when $f \in L^1 (\Omega)$]
	\label{t12}
	Assume that $f \in L^1 (\Omega)$ and \eqref{H}. Then, the sequence $u_j$
	satisfies 
	\begin{align}
	\|\nabla u_j \|_{L^{n^{\prime },\infty}} &\le C \int_{ \Omega } |f_j|,
	\label{eq:bound Sobolev L n prime with L1 of f} \\
	\int_ \Omega V_j |u_j| &\le C \int_{ \Omega } |f_j|.
	\label{eq:bound V omega in L n prime with L1 of f}
	\end{align}
	Hence 
	\begin{equation}
	u_j \rightharpoonup u \text{ in } W_0^1 L^{n^{\prime },\infty}
	(\Omega),
	\end{equation}
	and the equations \eqref{eq:bound Sobolev L n prime with L1 of f} and 
	\eqref{eq:bound
	V omega in L n prime with L1 of f} hold for $u, V$ and $f$.
\end{proposition}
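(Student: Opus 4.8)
The plan is to establish, uniformly in $j$, the two a priori estimates \eqref{eq:bound Sobolev L n prime with L1 of f} and \eqref{eq:bound V omega in L n prime with L1 of f} for the strong approximants $u_j$ of \eqref{eq31-4}, and then to pass to the limit using the convergences already recorded in \Cref{t11} together with weak-$\star$ compactness in $W_0^1 L^{n',\infty}(\Omega)$. I would begin with the bound on $\int_\Omega V_j|u_j|$: since $u_j\in W^2L^{p,1}(\Omega)$ with $p>n$ it is a strong solution, $u_j\in W^{1,\infty}(\Omega)\cap W_0^{1,1}(\Omega)$ and $\Delta u_j\in L^1(\Omega)$, so I may multiply the equation in \eqref{eq31-4} by $\beta_\eta(u_j)$ — with $\beta_\eta$ a smooth, odd, nondecreasing approximation of $\sign$ satisfying $\beta_\eta(0)=0$ — and integrate over $\Omega$. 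Integrating by parts, the boundary term vanishes since $u_j=0$ and $\beta_\eta(0)=0$ on $\partial\Omega$; the Laplacian term yields $\int_\Omega\beta_\eta'(u_j)|\nabla u_j|^2\ge0$; and, writing $B_\eta'=\beta_\eta$, $B_\eta(0)=0$, the convective term equals $\int_\Omega\vec U_j\cdot\nabla B_\eta(u_j)=-\int_\Omega(\diver\vec U_j)B_\eta(u_j)=0$ because $\vec U_j\in\mathcal C_c^\infty(\Omega)^n$ is divergence free. Hence $\int_\Omega V_j u_j\beta_\eta(u_j)\le\int_\Omega f_j\beta_\eta(u_j)\le\int_\Omega|f_j|$, and letting $\eta\to0$ (Fatou's lemma, since $u_j\beta_\eta(u_j)\ge0$ and $u_j\beta_\eta(u_j)\to|u_j|$ a.e.) gives $\int_\Omega V_j|u_j|\le\int_\Omega|f_j|$.

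For the gradient estimate I would use $\diver\vec U_j=0$ to write $\vec U_j\cdot\nabla u_j=\diver(u_j\vec U_j)$, so that $u_j$ solves $-\Delta u_j+\diver(u_j\vec U_j)=g_j$ with $g_j:=f_j-V_ju_j$, which by the previous step satisfies $\|g_j\|_{L^1(\Omega)}\le2\int_\Omega|f_j|$. The (by now classical, cf.\ \cite{RakoJFA}) boundedness of the Green operator of $-\Delta$ from $L^1(\Omega)$ into $W_0^1 L^{n',\infty}(\Omega)$, together with the treatment of the divergence-form drift $u_j\vec U_j$ under \eqref{H} — which is exactly the mechanism behind Theorem 4.1 of \cite{DGRT} — then yields $\|\nabla u_j\|_{L^{n',\infty}}\le C\|g_j\|_{L^1(\Omega)}\le C\int_\Omega|f_j|$, with $C$ independent of $j$ since $\vec U_j\to\vec U$ in $L^{p,1}(\Omega)^n$ and hence is uniformly bounded there. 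I expect this uniform-in-$j$ gradient bound — that is, keeping track of the drift perturbation of the sharp $L^1$–Lorentz–Sobolev elliptic estimate — to be the main obstacle; under \eqref{eq:H with p greater n} one can split off a uniformly small $L^{n,1}$ piece of $\vec U$, while under \eqref{eq:H with p equal n} the smallness of $\|\vec U\|_{L^{n,1}}$ is used directly.

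Finally, by the two bounds the sequence $u_j$ is bounded in $W_0^1 L^{n',\infty}(\Omega)$; since $L^{n',\infty}(\Omega)=(L^{n,1}(\Omega))^*$ and $L^{n,1}(\Omega)$ is separable, a subsequence of $\nabla u_j$ converges weakly-$\star$ in $L^{n',\infty}(\Omega)^n$, and by the convergences of \Cref{t11} (in particular $u_j\to u$ a.e.\ and in $L^r$ for $r<n'$, and $u_j\rightharpoonup u$ in $W^{1,q}(\Omega,\delta)$-weak for $q<n'$) this limit must be $\nabla u$; hence $u\in W_0^1 L^{n',\infty}(\Omega)$ and $u_j\rightharpoonup u$ there. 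Lower semicontinuity of $\|\cdot\|_{L^{n',\infty}}$ under weak-$\star$ limits and $\int_\Omega|f_j|\to\int_\Omega|f|$ give $\|\nabla u\|_{L^{n',\infty}}\le C\int_\Omega|f|$; and since $V_j\uparrow V$ and $u_j\to u$ a.e.\ imply $V_j|u_j|\to V|u|$ a.e., Fatou's lemma gives $\int_\Omega V|u|\le\liminf_j\int_\Omega V_j|u_j|\le C\int_\Omega|f|$. This establishes \eqref{eq:bound Sobolev L n prime with L1 of f} and \eqref{eq:bound V omega in L n prime with L1 of f} for $u$, $V$, $f$, completing the proof.
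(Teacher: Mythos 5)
Your argument is correct, and the two estimates are obtained in essentially the same spirit as in the paper, but by genuinely different technical routes. For \eqref{eq:bound V omega in L n prime with L1 of f} the paper tests with $\Phi(t;u_j)=(|u_j|-t)_+\sign(u_j)$, differentiates the resulting identity in $t$, discards the nonnegative term $-\frac{d}{dt}\int_{|u_j|>t}|\nabla u_j|^2$ and lets $t\to0$; your smooth approximation $\beta_\eta$ of $\sign$ reaches the same inequality by the more classical Kato-type computation, and both hinge on the same two facts (the drift term vanishes by $\diver\vec U_j=0$ and $u_j\in H_0^1$, the potential term has a sign). The real divergence is in \eqref{eq:bound Sobolev L n prime with L1 of f}: the paper tests with the truncation $T_k(u_j)$, obtains $\int_\Omega|\nabla T_k(u_j)|^2\le k\int_\Omega|f_j|$ for every $k$, and invokes the truncation lemma of \cite{BBGGPV} to conclude $\|\nabla u_j\|_{L^{n',\infty}}\le C\|f\|_{L^1}$ — an argument in which the drift and the potential simply drop out, so no quantitative control of $\vec U_j$ is needed and the constant depends only on $n$ and $\Omega$. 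You instead absorb $V_ju_j$ into the right-hand side (using the bound you just proved), rewrite the drift as $\diver(u_j\vec U_j)$, and appeal to the $L^1\to W_0^1L^{n',\infty}$ theory for the drift-perturbed Laplacian from Theorem 4.1 of \cite{DGRT}. That route is legitimate, but it leans on a heavier external result and reintroduces exactly the dependence on $\vec U$ (smallness of an $L^{n,1}$ piece under \eqref{eq:H with p greater n}, smallness of the full norm under \eqref{eq:H with p equal n}) that the paper's truncation argument avoids; you correctly flag this as the delicate point. The limit passage (weak-$\star$ compactness in $W_0^1L^{n',\infty}$, identification of the limit via \Cref{t11}, lower semicontinuity and Fatou with $V_j\uparrow V$) coincides with the paper's.
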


\begin{proof}
	Let $k>0$. Then the sequence given in \Cref{t11} satisfies
	\begin{equation}\label{eq12}
	\int_\O\vec U_j\cdot\nabla\g_jT_k(\g_j)dx=0\hbox{ and }\int_\O V_j\g_jT_k(\g_j)dx\GEQ0.
	\end{equation}
	Therefore, we can use $T_k(\g_j)$ as a test function in equation (\ref{eq31-4}) and derive
	\begin{equation}\label{eq13}
	\int_\O|\nabla T_k(\g_j)|^2dx\LEQ k\int_\O|f_j|dx\LEQ k\int_\O|f(x)|dx.
	\end{equation}
	From relation (\ref{eq13}), we deduce (see \cite{BBGGPV} or \cite{RakoBook}) that
	\begin{equation}\label{eq14}
	\|\nabla\g_j\|_{L^{n',\infty}}\LEQ c|f|_{L^1(\O)}.
	\end{equation} 
	While to obtain relation \eqref{eq:bound V omega in L n prime with L1 of f}, we choose as a test function for $t>0$,
	$$\Phi(t;\g_j)= (|\g_j|-t)_+  \sign(\g_j).$$
	Knowing as before that
	\begin{equation}\label{eq15}
	\int_\O\vec U_j\cdot\nabla\g_j\Phi(t;\g_j)dx=0
	\end{equation}
	one obtains from equation (\ref{eq31-4}) that
	\begin{equation}\label{eq16}
	\int_{|\g_j|>t}|\nabla\g_j|^2dx+\int_\O V_j\g_j\Phi(t;\g_j)dx=\int_\O f_j\Phi(t,\g_j)dx.
	\end{equation}
	We derive with respect to $t$ this equation
	\begin{equation}\label{eq17}
	-\dfrac d{dt}\int_{|\g_j|>t}|\nabla \g_j|^2dx+\int_{|\g_j|>t} V_j|\g_j|dx
	=\int_{|\g_j|>t}f(x)\sign(\g_j)dx.
	\end{equation}
	Since the first term is non negative, we conclude from relation (\ref{eq17}) that, for all $t>0$,
	\begin{equation}\label{eq18}
	\int_{|\g_j|>t}V_j|\g_j|dx\LEQ\int_{|\g_j|>t}|f(x)|dx.
	\end{equation}
	Letting $t\to0$, we get the desired relation \eqref{eq:bound V omega in L n prime with L1 of f}.
	Since
	$V_j\g_j\to V\g$ a.e. in $\O$, Fatou's lemma yields 
	\begin{equation}\label{eq19}
	\int_\O V|\g|dx\LEQ\int_\O|f(x)|dx.
	\end{equation}
	Given that $\nabla\g_j\rightharpoonup\nabla\g$ in $L^{n',\infty}$-weak-$\star$, we derive
	\begin{equation}\label{eq20}
	\|\nabla \g\|_{L^{n',\infty}}\LEQ c|f|_{L^1(\O)}.
	\end{equation}
	That \eqref{eq:vws Brezis} is satisfied is a consequence of Lemma \ref{lem:equivalence}, since, by the Hardy's inequality, we have
	\begin{equation}\label{eq260}
	\left|\dfrac\g\d\right|_{L^1(\O)}\LEQ c\|\nabla \g\|_{L^{n',\infty}}<+\infty.
	\end{equation}
	This concludes the proof.
\end{proof}

With this we proceed
\begin{proof}[Proof of \Cref{c1t29}]
	According to \Cref{t12}, the sequence $\g_j$ belongs to a bounded set of $W^1_0L^{n',\infty}(\O)$ and since the sequence converges to a solution $\g$ of the equation \eqref{eq:vws distributional} given in \Cref{t1}, we deduce that this solution $\g$ is in $W^1_0L^{n',\infty}(\O)$ and satisfies the same kind of estimates as $\g_j$. Moreover, $\frac \g\d\in L^1(\O)$ according to relation (\ref{eq260}). Now we may appeal \Cref{thm:uniqueness L1 Omega delta}  to conclude that $\g$ is unique.   
\end{proof}
Finally we can prove
\begin{proof}[Proof of \Cref{thm:existence result}]
	Let $f$ be in $L^1(\O;\d)$ and consider $f_j=\sign\big(f(\cdot)\big)\min\big(|f|;j\big),\ j\GEQ0$. Then according to the above result \Cref{c1t29}, there exists 
	a unique $\WT\g_j\in W_0^1L^{n',\infty}(\O)$ satisfying
	\begin{equation} \tag*{$(\ref{eq:vws Brezis})_j$} \label{eq2j}
	\int_\O\WT\g_j\big[-\Delta\phi-\vec U\cdot\nabla\phi+V\,\phi\big]dx=\int_\O f_j\phi dx,\quad\forall\,\phi\in W_2. 
	\end{equation}
	Since $f_j-f_k\in L^1(\O)$ for $k$ and $j$ in $\N$, by the same corollary 1 of Theorem \ref{thm:uniqueness L1 Omega delta} and Theorem~\ref{t12},
	we deduce that $\WT\g_j-\WT\g_k$ is the unique solution of 	\begin{equation*}
	\int_\O(\WT\g_j-\WT\g_k)\big[-\Delta\phi-\vec U\cdot\nabla\phi+V\,\phi\big]dx=\int_\O (f_j-f_k)\phi dx,\quad\forall\,\phi\in W_2,
	\end{equation*}then it satisfies
	\begin{align}
	\DST\int_\O V|\WT\g_j-\WT\g_k|\d\,dx&\LEQ c_u\int_\O|f_j-f_
	k|\d\,dx\nonumber\\
	\intertext{and}
	\|\WT\g_j-\WT\g_k\|_{L^{n',\infty}}&\LEQ c_u\int_\O|f_j-f_k|\d\,dx.\label{eq29 }
	\end{align}
	Thus $(\WT\g_j)_j$ is a Cauchy sequence in $L^{n',\infty}(\O)$ and $(V\,\WT\g_j)_j$ is also a Cauchy one in $L^1(\O;\d)$. Therefore one has easily $\WT \g\in L^{n',\infty}(\O)$ with $V\,\WT\g\in L^1(\O;\d)$ such that $\WT\g$ satisfies equation \eqref{eq:vws Brezis}. Moreover,
	$\int_\O V|\WT\g|\d\,dx\LEQ c\int_\O f\d\,dx$ and if $f\GEQ0$ then $f_j\GEQ0$ therefore $\WT\g_j\GEQ0$ which yields that $\WT \g\GEQ0$.
\end{proof}

\section{Proof of the uniqueness results}\label{sec:6}

To complete the proof of the results above we only need to prove the uniqueness of the solutions of the
equations. Once we complete the proof of \Cref{thm:uniqueness L1 Omega delta}
the rest of the proofs will follow as a corollary. The main tool in this
proof will be a Kato type inequality up to the boundary.

\subsection{Kato's inequality}

Notice that, in the following result no Sobolev space is included, and hence no trace is involved. We do not consider boundary conditions in the usual way.
\begin{theorem}[Variant of Kato's inequality]\label{t4}
	Let $\OV \g$ be in $W^{1,1}_{loc}(\O)\cap L^{n',\infty}(\O)$ with $\frac{\OV\g}\d\in L^1(\O)$ and $\vec U\in L^{n,1}(\O)^n$ with $\div(\vec U)=0$ in $\calD'(\O),\ \vec U\cdot\vec \nu=0$ on $\p\O$.
	Assume that $L\OV\g=-\Delta\,\OV\g+\div(\vec U\,\OV\g)\in L^1(\O;\d).$
	Then, for all $\phi\in W_2$, $\phi\GEQ0$ one has
	\begin{enumerate}
		\item $\DST\int_\O\OV\g_{+}L^*\,\phi\, dx\LEQ\int_\O\phi\,\sign_{+}(\OV\g)\,L\,\OV\g\, dx$,
		\item $\DST\int_\O|\OV\g|L^*\,\phi\, dx\LEQ \int_\O\phi\,\sign(\OV\g)L\OV\g\,dx,$ 
	\end{enumerate}
	where $L^*\phi=-\Delta\phi-\vec U\cdot\nabla\phi=-\Delta\phi-\div(\vec U\,\phi),$ 
	$$
		\sign_+(\s)=\begin{cases}1&\hbox{if }\s>0,\\0&\hbox{otherwise},\end{cases}
		\quad 
		\textrm{ and }
		\quad 
		\sign(\s)=\begin{cases}1&\hbox{if }\s>0,\\0&\hbox{if }\s=0,\\-1&\hbox{if }\s<0.\end{cases}
	$$ 
\end{theorem}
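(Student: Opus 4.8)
The plan is to obtain the inequality by a regularization-and-test-function argument, being careful that no integration by parts near $\partial\Omega$ generates uncontrolled boundary terms. First I would reduce (2) to (1): since $|\OV\g| = \OV\g_+ + (-\OV\g)_+$ and $\sign(\OV\g) = \sign_+(\OV\g) - \sign_+(-\OV\g)$, and since $L(-\OV\g) = -L\OV\g$ with $L^*$ linear, statement (2) follows by adding (1) applied to $\OV\g$ and (1) applied to $-\OV\g$. So the whole work is in proving (1).

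For (1), the heart of the matter is a pointwise/distributional Kato inequality: if $g := L\OV\g = -\Delta\OV\g + \div(\vec U\,\OV\g)$, then $-\Delta\OV\g_+ + \div(\vec U\,\OV\g_+) \le \sign_+(\OV\g)\, g$ in $\calD'(\Omega)$. The standard route is to approximate $s\mapsto s_+$ by smooth convex nondecreasing $\beta_\eps$ with $\beta_\eps' \to \sign_+$, $0\le\beta_\eps'\le 1$, $\beta_\eps(s)\to s_+$. Since $\OV\g\in W^{1,1}_{loc}$, one has $\beta_\eps(\OV\g)\in W^{1,1}_{loc}$ with $\nabla\beta_\eps(\OV\g) = \beta_\eps'(\OV\g)\nabla\OV\g$, and the chain-rule combined with $\beta_\eps$ convex gives, in $\calD'$, $-\Delta\beta_\eps(\OV\g) \le -\beta_\eps'(\OV\g)\Delta\OV\g$; for the convective term, $\div(\vec U\,\beta_\eps(\OV\g)) = \vec U\cdot\nabla\beta_\eps(\OV\g) = \beta_\eps'(\OV\g)\,\vec U\cdot\nabla\OV\g = \beta_\eps'(\OV\g)\div(\vec U\,\OV\g)$ using $\div\vec U = 0$. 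Adding and passing $\eps\to 0$ yields the distributional Kato inequality. To integrate this against $\phi\in W_2$, $\phi\ge 0$, I would NOT use $\phi$ directly (it is not compactly supported), but instead use Lemma \ref{t3}: take $\phi_j\in C_c^\infty(\Omega)$, $\phi_j\ge 0$ (one can arrange nonnegativity of the approximants from the construction $\phi_j = h_j\phi$ since $h_j\ge 0$), with $\|\nabla\phi_j\|_\infty\le C$, $\phi_j\to\phi$ uniformly, $\delta\Delta\phi_j\rightharpoonup\delta\Delta\phi$ and $\phi_j/\delta\rightharpoonup\phi/\delta$ in $L^\infty$-weak-$\star$, and $\nabla\phi_j\rightharpoonup\nabla\phi$ in $L^\infty$-weak-$\star$ as well. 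Testing the distributional inequality against $\phi_j$ gives
\begin{equation*}
	\int_\Omega \OV\g_+\left(-\Delta\phi_j - \vec U\cdot\nabla\phi_j\right)dx \le \int_\Omega \phi_j\,\sign_+(\OV\g)\, L\OV\g\, dx.
\end{equation*}

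It remains to pass to the limit $j\to\infty$ on both sides, and this is where the integrability hypotheses are exactly what is needed. On the right, $\phi_j\to\phi$ uniformly and $L\OV\g\in L^1(\Omega;\delta)$; but $\phi_j$ is only bounded uniformly while $\phi/\delta$ is bounded, so I would rather write $\phi_j L\OV\g = (\phi_j/\delta)\cdot(\delta\, L\OV\g)$ and use $\phi_j/\delta\rightharpoonup\phi/\delta$ weak-$\star$ against $\delta\,L\OV\g\in L^1$, so the right side converges to $\int_\Omega\phi\,\sign_+(\OV\g)L\OV\g$. On the left, for the Laplacian term write $\OV\g_+\Delta\phi_j = (\OV\g_+/\delta)(\delta\Delta\phi_j)$; since $\OV\g_+/\delta \le |\OV\g|/\delta\in L^1(\Omega)$ and $\delta\Delta\phi_j\rightharpoonup\delta\Delta\phi$ weak-$\star$, this converges to $\int_\Omega\OV\g_+\Delta\phi$. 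For the convective term, $\OV\g_+\vec U\in L^1(\Omega)^n$ because $|\OV\g|\in L^{n',\infty}(\Omega)$ and $\vec U\in L^{n,1}(\Omega)^n$ are Hölder-dual in the Lorentz scale, so $\nabla\phi_j\rightharpoonup\nabla\phi$ weak-$\star$ gives convergence to $\int_\Omega\OV\g_+\vec U\cdot\nabla\phi$. This yields exactly statement (1). The main obstacle I anticipate is the justification of the distributional chain rule / Kato inequality for $\OV\g$ only in $W^{1,1}_{loc}$ together with $\vec U\in L^{n,1}$ merely locally controlled — one must check that $\beta_\eps'(\OV\g)\Delta\OV\g$ makes sense and that the limiting step $\eps\to 0$ is legitimate despite $\Delta\OV\g$ being only a (signed) measure a priori; the resolution is that, on any fixed test function's compact support, $L\OV\g\in L^1$ locally pins down enough regularity, and the convexity estimate only ever needs $\beta_\eps'(\OV\g)\le 1$, which makes all the integrands dominated. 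A secondary technical point is verifying that the approximants $\phi_j$ can be taken nonnegative while retaining all four convergences of Lemma \ref{t3}; this is immediate since multiplication by $0\le h_j\le 1$ preserves the sign of $\phi$.
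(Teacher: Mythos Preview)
Your proposal is correct and follows essentially the same route as the paper: first obtain the local (distributional) Kato inequality for compactly supported test functions, then use the approximation Lemma~\ref{t3} to pass from $\phi_j\in C_c^\infty(\Omega)$ to $\phi\in W_2$ via the weak-$\star$ convergences $\delta\Delta\phi_j\rightharpoonup\delta\Delta\phi$ and $\nabla\phi_j\rightharpoonup\nabla\phi$, paired with $\OV\g_+/\delta\in L^1$ and $\OV\g_+\vec U\in L^1$ respectively. The only cosmetic difference is that the paper cites the local Kato inequality from \cite{DGRT} rather than rederiving it via the convex regularization $\beta_\eps$ as you do.
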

The proofs of both theorem (\Cref{thm:uniqueness L1 Omega delta} above and \Cref{t4}  below) follow the same argument as we did in \cite{DGRT} (Corollary 4 Theorem 10, Theorem 8). The only difference is the use of the new approximation \Cref{t3}. For the convenience of the reader we sketch here those proofs :

\begin{proof} [Sketch of the proof of \Cref{t4}]
	Let $\phi\GEQ0$, $\phi\in W_2$. Then according to \Cref{t3} one has a sequence $\phi_j\in C^\infty_c(\O)$ such that
	$\d\Delta\phi_j\rightharpoonup \d\Delta\phi$ in $L^\infty(\O)$-weak-$\star$. This implies, together with the hypothesis that $\frac{\g_+}\d\in L^1(\O)$, that
	\begin{equation}\label{eq23}
	\lim_{j\to+\infty}\int_\O\OV\g_+\Delta\phi_j dx=\int_\O\OV\g_+\Delta\phi dx.
	\end{equation}
	For the same reason
	\begin{equation}\label{eq24}
	\lim_{j\to+\infty}\int_\O\vec U\cdot\nabla\phi_j\OV\g_+dx=\int_\O\vec U\cdot\nabla\phi\OV\g_+dx.
	\end{equation}
	We conclude as in \cite{DGRT}, knowing that the local Kato's inequality (Theorem 10 in \cite{DGRT}) holds true.
\end{proof}

One of the consequence of the Kato's inequality is the following maximum principle.

\begin{corollary}[of Theorem \ref{t4}]\label{c1t4}
	Under the same hypothesis as for \Cref{t4}, assume that $L\OV\g=f(x)-G(x;\OV\g)\in L^1(\O;\d)$, with $G:\O\times\R\to\R$ a Caratheodory function (i.e for a.e $x$, $\s\to G(x;\s)$ is continuous, and $x\to G(x;\s)$ is measurable  $\forall\,x$), satisfying the sign-function condition   $$\sign(\s)G(x;\s)\GEQ0\quad \forall\,\s\in\R\hbox{ a.e }x\in\O.$$ Then,
	if $f\LEQ0$ one has $\OV\g\LEQ0$.
\end{corollary}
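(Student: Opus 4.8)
The strategy is the standard one for deriving a maximum principle from a Kato-type inequality: apply part (1) of Theorem \ref{t4} to $\OV\g$, exploit the sign condition on $G$ to kill (or sign) the nonlinear term, and then choose the test function $\phi$ so that $L^*\phi$ is positive, concluding that $\OV\g_+$ must vanish. First I would check that $\OV\g$ satisfies the hypotheses of Theorem \ref{t4}: we are assuming $L\OV\g = f(x) - G(x;\OV\g) \in L^1(\O;\d)$, $\OV\g \in W^{1,1}_{loc}(\O) \cap L^{n',\infty}(\O)$ with $\OV\g/\d \in L^1(\O)$, and the stated hypotheses on $\vec U$, so the theorem applies directly. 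Then, for any $\phi \in W_2$ with $\phi \GEQ 0$, part (1) gives
\begin{equation*}
	\int_\O \OV\g_+ \, L^*\phi \, dx \LEQ \int_\O \phi \, \sign_+(\OV\g)\, \big( f(x) - G(x;\OV\g) \big)\, dx.
\end{equation*}

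Next I would handle the right-hand side. On the set $\{\OV\g > 0\}$ we have $\sign_+(\OV\g) = 1$ and $\sign(\OV\g) = 1$, so the sign condition $\sign(\s) G(x;\s) \GEQ 0$ forces $G(x;\OV\g) \GEQ 0$ there; on $\{\OV\g \LEQ 0\}$ the factor $\sign_+(\OV\g)$ vanishes. Hence $\sign_+(\OV\g)\, G(x;\OV\g) \GEQ 0$ a.e., and since also $f \LEQ 0$ by hypothesis, $\sign_+(\OV\g)\,(f - G(x;\OV\g)) \LEQ 0$ a.e.; as $\phi \GEQ 0$, the right-hand side integral is $\LEQ 0$. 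Therefore
\begin{equation*}
	\int_\O \OV\g_+ \, L^*\phi \, dx \LEQ 0 \qquad \text{for all } \phi \in W_2,\ \phi \GEQ 0.
\end{equation*}

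Finally I would choose the test function. The natural candidate is $\phi = \f_1$, the principal Dirichlet eigenfunction from \eqref{eq03}, which is in $W_2$, satisfies $\f_1 > 0$ in $\O$, and for which $L^*\f_1 = -\Delta \f_1 - \vec U \cdot \nabla \f_1 = \l_1 \f_1 - \vec U \cdot \nabla \f_1$; but this is not obviously of one sign because of the convective term, so a little care is needed here — this is the step I expect to be the main obstacle. The clean way around it is to use, for $k > 0$, the solution $\phi_k \in W_2$ (or its closure) of $L^*\phi_k = -\Delta \phi_k - \vec U \cdot \nabla \phi_k = 1$ with $\phi_k = 0$ on $\p\O$; by the classical maximum principle for the formal adjoint operator $L^* = -\Delta - \div(\vec U\, \cdot)$ with the incompressibility condition \eqref{eq:incompressible u weak} (note $\vec U \in L^{n,1}$ puts us in the regularity range where such solutions exist and are nonnegative — this is exactly the type of auxiliary problem already used in \cite{DGRT}), one has $\phi_k \GEQ 0$. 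Then $\int_\O \OV\g_+ \, dx \LEQ 0$, and since $\OV\g_+ \GEQ 0$ this forces $\OV\g_+ = 0$ a.e., i.e. $\OV\g \LEQ 0$. (Alternatively one may test with an approximating sequence $\phi_j \in C_c^\infty(\O)$, $\phi_j \GEQ 0$, increasing to $1$ on compact subsets, and pass to the limit using $\OV\g_+/\d \in L^1(\O)$ together with the bounds on $L^*\phi_j$; the only thing to verify is that $L^*\phi_j$ stays controlled, which is where one must be slightly careful, but the argument parallels the proof of Theorem \ref{t4} itself.) This completes the proof.
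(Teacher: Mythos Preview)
Your proposal is correct and follows essentially the same route as the paper: apply Kato's inequality (part~1 of Theorem~\ref{t4}), use the sign condition on $G$ together with $f\LEQ 0$ to get $\int_\O \OV\g_+\,L^*\phi\,dx\LEQ 0$ for all nonnegative $\phi\in W_2$, and then test with the solution $\OV\phi_0\GEQ 0$ of $L^*\OV\phi_0=1$ to force $\OV\g_+=0$. The only point the paper makes more explicit is the passage from $W_2$ to the larger class $W^2L^{n,1}(\O)\cap H^1_0(\O)$ (where $\OV\phi_0$ actually lives) via a density argument using that $\OV\g_+\in L^{n',\infty}(\O)$ and $L^*\phi\in L^{n,1}(\O)$; your parenthetical ``or its closure'' gestures at this step but does not spell it out.
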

\begin{proof} 
	Let $ \phi\in W_2$ be such that  $\phi\GEQ0$. Then
	\begin{equation}\label{eq101}
	\int_\O  \OV\g_+L^*\phi\,dx\LEQ\int_\O\phi\,\sign_+(\OV\g)f(x)dx-\int_\O\phi G(x;\OV\g_+)dx,
	\end{equation}
	since $G(x;0)=0$ and $\sign_+(\s)G(x;\s)=G(x;\s_+)\GEQ0$.
	Therefore, from this last inequality (\ref{eq101}), knowing that
	$$-\phi G(x;\OV\g_+)\LEQ0,\quad f(x)\sign_+(\OV\g)\LEQ0,$$
	we deduce that
	\begin{equation}\label{eq266}\forall\,\phi\GEQ0,\ \phi\in W_2:\int_\O \OV\g_+L^*\phi dx\LEQ 0.
	\end{equation}
	Since $\OV \g\in L^{n',\infty}(\O)$ and $L^*\phi=-\Delta\phi-\vec U\cdot \nabla\phi$ is in $L^{n,1}(\O)$ for $\phi\in W^2L^{n,1}(\O)\cap H^1_0(\O)$, thus a density argument leads from equation (\ref{eq266}) to 
	\begin{equation}\label{eq27}
	\int_\O\OV\g_+L^*\phi dx\LEQ 0\qquad  \forall\,\phi\in W^2L^{n,1}(\O)\cap H^1_0(\O),\ \phi\GEQ0.
	\end{equation}
	Thus, we get:
	$$\OV\g_+=0.$$
	This completes the proof.
\end{proof}
\subsection{Proof of the uniqueness results}

\begin{proof} [Proof of \Cref{thm:uniqueness L1 Omega delta}]
	Let $\OV \g=\g_1-\g_2$ where $\g_i$ are in $L^{n',\infty}(\O)\cap L^1(\O;\d^{-1})$ and are two solutions of equation  \eqref{eq:vws Brezis} (or \eqref{eq:vws distributional}, these formulations are equivalent due to  \Cref{lem:equivalence} since $\g_i\in L^1(\O;\d^{-1})$). Then
	$$L\OV \g=-V\OV\g\in L^1(\O;\d).$$
	From \Cref{t4} one has, for a test function $\phi\in W_2$ such that $\phi\GEQ0$,
	\begin{equation}\label{eq26}
	\int_\O|\OV\g|L^*\phi dx\LEQ-\int_\O\phi\sign(\g)V\OV\g=-\int_\O\phi V|\OV\g|dx\LEQ0.
	\end{equation}
	As before one has:
	\begin{equation}\label{eq27a}
	\int_\O|\OV\g|L^*\phi dx\LEQ 0\qquad \forall\,\phi\in W^2L^{n,1}(\O)\cap H^1_0(\O),\ \ \phi\GEQ 0.
	\end{equation}
	Considering $\OV\phi_0\in W^2L^{n,1}(\O)\cap H^1_0(\O),\ \OV\phi_0\GEQ0$ solution of $L^\star\OV\phi_0=1$, we deduce $$\DST\int_\O|\OV \g|dx\LEQ0$$ 
	thus $\OV\g=0$.
\end{proof}

\begin{proof}[Proof of \Cref{thm:well posedness in L1 delta 1 + log delta}]
		First let us assume that $f \ge 0$.	
		Since $f$ is a nonnegative function in $L^1(\O;\d)$, the existence of a solution $\g\GEQ0$ is a consequence of Theorem \ref{thm:existence result}. To prove the uniqueness result, let us show that exists a $c>0$ independent of $\g, f$ and $V$ such that
		\begin{equation}\label{eq30}
		\int_\O\dfrac\g\d dx+\int_\O V\,\g\d(1+|\Log\d|)dx\LEQ c\int_\O f(x)(1+|\Log\d|)\d dx.
		\end{equation}
		For this, we use the argument introduced in \cite{RakoJFA} by choosing as a test function $$\phi=\f_1\Log(\f_1+\eps),\ \eps>0,$$ 
		where $\f_1$ the first eigenfunction of $-\Delta$ with homogeneous Dirichlet boundary condition.\\
		One obtains
		\begin{equation}\label{eq31}
		-\!\!\int_\O\!\g\Delta(\f_1\Log(\f_1+\eps))dx-\!\!\int_\O\!\vec U\,\g\cdot\nabla(\f_1\Log(\f_1+\eps))dx
		+\!\int_\O\! V\g\f_1\Log(\f_1+\eps)dx=\!\!\!\int_\O f\f_1\Log(\f_1+\eps)dx.
		\end{equation}
		We develop  each term in relation (\ref{eq31}) as we did in \cite{RakoJFA} knowing that $\f_1$ is equivalent to the distance function (say $\exists c_0>0,\ c_1>0,\ c_0\d\LEQ\f_1\LEQ c_1\d$). We derive
		\begin{align}\label{eq32}
		\int_\O|\nabla\f_1|^2\dfrac\g{\f_1+\eps}dx&-\int_\O V\,\g\f_1\Log(\f_1+\eps)dx\\
		&\LEQ c\left[\int_\O\g(x)dx+\int_\O f(x)(1+|\Log\d|)\d dx\right]\nonumber\\
		&\quad +c\int_\O\|\vec U\|\,|\Log\d|\g dx+c\int_\O\|\vec U\|(x)\g(x)dx.\nonumber
		\end{align}
		Since $\vec U\in L^{p,1}(\O),\ p>1$ then $\|\vec U\|\Log \d\in L^{n,1}(\O)$ and there exists a constant $c>0$. 
		$$\Big\|\,|\vec U|\Log\d\Big\|_{L^{n,1}}\LEQ c\|\vec U\|_{L^{p,1}(\O)}.$$
		Therefore, we have
		\begin{equation}\label{eq33}
		c\int_\O\|\vec U\|\,|\Log\d|\g\,dx+c\int_\O\|\vec U\|(x)\g(x)dx
		\LEQ c_U\|\g\|_{L^{n',\infty}}\LEQ c\int_\O f(x)\d(x)dx.
		\end{equation}
		From relations (\ref{eq32}) and (\ref{eq33}), we deduce 
		\begin{equation}\label{eq34}
		\int_\O|\nabla\f_1|^2\dfrac\g{\f_1+\eps}dx-\int_\O V\,\g\f_1\Log(\f_1+\eps)dx\LEQ
		c\int_\O f(x)(1+|\Log\d|)\d dx.
		\end{equation}
		As in \cite{RakoJFA} we write
		\begin{equation}\label{eq35}
		\int_\O V\,\g\f_1|\Log(\f_1+\eps)|dx=-\int_\O V\,\g\f_1\Log(\f_1+\eps)dx+2\int_{\f_1+\eps>1}\!\!V\,\g\f_1\Log(\f_1+\eps)dx.
		\end{equation}
		Combining these two last relations, we get
		\begin{equation}\label{eq36}
		\int_\O|\nabla\f_1|^2\dfrac\g{\f_1+\eps}dx+\int_\O V\,\g\f_1|\Log(\f_1+\eps)|dx\
		\LEQ c\int_\O f(x)(1+|\Log\d|)\d dx+c\int_\O V\,\g\d dx.
		\end{equation}
		Noticing that in a neighborhood of the boundary $\p\O\subset U\subset\OV\O$ one has $\DST\inf_{x\in U}|\nabla\f_1|^2(x)>0$, we derive from relation (\ref{eq36}) the inequality (\ref{eq30}).\\

		Let $f$ be in $L^1\big(\O;\d(1+|\Log\d|)\big)$, we decompose  $f=f_+-f_-$ where $f_+, f_- \ge 0$. Due to the first part of the proof, we have $\g_1 $ (resp. $\g_2$) a nonnegative solution of \eqref{eq:vws Brezis} associated to $f_+$ (resp. $f_-$). One has according to relation (\ref{eq30}) for $i=1,2$
		\begin{equation}\label{eq37}
		\int_\O\dfrac{\g_i}\d dx+\int_\O V\,\g_i\d(1+|\Log\d|)dx\LEQ
		c\int_\O|f|(1+|\Log\d|)\d dx.
		\end{equation}
		By linearity we deduce that $\WT\g=\g_1-\g_2$ is a solution of equation \eqref{eq:vws distributional} and satisfies $\frac {\WT\g}\d\in L^1(\O)$. We conclude with Theorem \ref{thm:uniqueness L1 Omega delta} to obtain the result.
	\end{proof}

\section{Estimates when the datum $f$ is $L^1(\O;\d^\a),\ 0\LEQ\a\LEQ1$}\label{sec:7}
\begin{lemma}\label{l3}
	Under the same assumptions as for \Cref{thm:well posedness in L1 delta 1 + log delta}, if furthermore $f\in L^1(\O;\d^\a)$, $0\LEQ\a<1$ then the function $\WT\g$ solution of equation \eqref{eq:vws Brezis} verifies
	$$\int_\O(V|\WT\g|\d^\a)(x)dx\LEQ c_\a\int_\O|f(x)|\d^\a(x)dx.$$
\end{lemma}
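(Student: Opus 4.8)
The plan is to read the claimed estimate as a (Stein--Weiss type) interpolation, on the scale of weighted spaces $L^1(\Omega;\delta^\alpha)$, of two endpoint bounds that are already at our disposal. At $\alpha=0$, \Cref{c1t29} supplies a constant $C_0=C_0(\Omega,\vec U)$, independent of $f$ and $V$, with
\[
	\int_\Omega V|u|\,dx\le C_0\int_\Omega|f|\,dx,\qquad f\in L^1(\Omega);
\]
at $\alpha=1$, \Cref{thm:existence result} together with item~6 of \Cref{t11} supplies a constant $C_1=C_1(\Omega,\vec U)$, again independent of $f$ and $V$, with
\[
	\int_\Omega V|u|\,\delta\,dx\le C_1\int_\Omega|f|\,\delta\,dx,\qquad f\in L^1(\Omega;\delta),
\]
where $C_1<\infty$ because \eqref{eq:H with p greater n} forces $\vec U\in L^{p,1}(\Omega)^n\subset L^{n,1}(\Omega)^n$. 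By the uniqueness statement of \Cref{thm:uniqueness L1 Omega delta}, the assignment $T\colon f\mapsto Vu$, with $u$ the solution of \eqref{eq:vws Brezis} associated to $f$, is a well-defined linear operator on $L^1(\Omega)$; the two displays say precisely that it is bounded from $L^1(\Omega)$ into itself and, after extension by density, bounded from $L^1(\Omega;\delta)$ into itself.

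Since $[L^1(\Omega;dx),L^1(\Omega;\delta\,dx)]_{[\theta]}=L^1(\Omega;\delta^{\theta}\,dx)$ (complex interpolation of weighted $L^1$ with a frozen exponent; equivalently one may use the real functor $(\,\cdot\,,\,\cdot\,)_{\theta,1}$), the operator $T$ is bounded from $L^1(\Omega;\delta^\alpha)$ into itself with norm $\le c_\alpha:=C_0^{1-\alpha}C_1^{\alpha}$ for each $\alpha\in(0,1)$. I would make this effective on the truncations $f^{(m)}:=\sign(f)\min(|f|,m)\in L^\infty(\Omega)\subset L^1(\Omega)$: writing $u^{(m)}$ for the corresponding (unique) solution of \eqref{eq:vws Brezis}, the interpolation inequality gives
\[
	\int_\Omega V|u^{(m)}|\,\delta^\alpha\,dx\le c_\alpha\int_\Omega|f^{(m)}|\,\delta^\alpha\,dx\le c_\alpha\int_\Omega|f|\,\delta^\alpha\,dx
\]
uniformly in $m$, and then I would let $m\to\infty$. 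Since $f^{(m)}\to f$ in $L^1(\Omega;\delta(1+|\Log\delta|))$ (a space containing $L^1(\Omega;\delta^\alpha)$ for $\alpha<1$), the stability already exploited in the proof of \Cref{thm:well posedness in L1 delta 1 + log delta} — apply \Cref{thm:existence result} to $u^{(m)}-u^{(m')}$, which solves the equation with datum $f^{(m)}-f^{(m')}\in L^1(\Omega)$ — shows that $(u^{(m)})_m$ is Cauchy in $L^{n',\infty}(\Omega)$, hence converges, necessarily to $\widetilde u$; passing to an a.e.\ convergent subsequence and using Fatou in the last display yields $\int_\Omega V|\widetilde u|\delta^\alpha\le c_\alpha\int_\Omega|f|\delta^\alpha$, which is the assertion.

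The hard part is not any single computation but the bookkeeping that licenses the interpolation: one must check that $T$ is one and the same linear operator on the overlap $L^1(\Omega)$ of the two endpoint spaces (so that the densely defined bounded operator can be interpolated), and that the limit of the truncated solutions really is the solution $\widetilde u$ singled out in \Cref{thm:well posedness in L1 delta 1 + log delta}; both are consequences of the uniqueness results but must be spelled out. It is worth recording why a more elementary ``direct'' route fails here: testing the approximate equation for $u^{(m)}$ against $\phi_\varepsilon:=(\varphi_1+\varepsilon)^\alpha-\varepsilon^\alpha\in W_2$ and letting $\varepsilon\downarrow0$ produces, besides a nonnegative term coming from $-\Delta\phi_\varepsilon$ and the wanted $\int_\Omega V|u^{(m)}|\,\phi_\varepsilon$, a convective contribution of order $\int_\Omega|u^{(m)}|\,\delta^{\alpha-1}|\vec U|$, which can neither be absorbed nor be dominated by $\int_\Omega|f|\,\delta^\alpha$ under the sole hypothesis $\vec U\in L^{p,1}(\Omega)^n$, $p>n$; it is exactly this term that forces the stronger assumption $\vec U\in L^{n/(1-\alpha)}(\Omega)$ in \Cref{t34}, under which the direct argument does close.
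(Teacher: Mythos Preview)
Your proof is correct and follows essentially the same route as the paper: interpolate the solution-to-potential map between the endpoint bounds at $\alpha=0$ (\Cref{c1t29}) and $\alpha=1$ (\Cref{t11}/\Cref{thm:existence result}), apply the interpolated bound to the truncations $f^{(m)}$, and pass to the limit via Fatou. The only organizational difference is that the paper first truncates the potential as well, setting $V_k=\min(V,k)$ and $T_kf:=V_k\tilde u_{kf}$; since $V_k$ is bounded, $T_k$ is unambiguously defined on all of $L^1(\Omega;\delta)$ by the uniqueness result of \cite{DGRT}, so no density extension is needed, at the price of an extra limit $k\to\infty$ before the limit in $j$. Your variant bypasses this double limit by using \Cref{c1t29} to define $T$ directly on $L^1(\Omega)$ and extending by density --- a legitimate and slightly more economical packaging of the same interpolation argument.
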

\begin{proof} 
	For $k\GEQ 0$, let us consider $V_k=\min(V;k)$ and define  the linear operator $T_k$ on $L^1(\O;\d)$ by setting $T_kf=V_k\WT\g_{kf}$, where $\WT\g_{kf}$  is the unique solution of
	\begin{equation}\label{eq38}
	\int_\O \WT\g_{kf}\left[-\Delta\phi+\vec U\cdot\nabla\phi+V_k\phi\right]dx=\int_\O f\phi dx\qquad \forall\,\phi\in W_2.
	\end{equation}
	The existence and uniqueness follows from Theorem 7 in \cite{DGRT}.
	
	According to Corollary \ref{c1t29} of Theorem \ref{thm:uniqueness L1 Omega delta} and \Cref{t12}. 
	$T_k$ maps $L^1(\O)$ into itself with
	\begin{equation}\label{eq39}
	\|T_k f\|_{L^1(\O)}=\int_\O V_k|\WT\g_{kf}|dx\LEQ\|f\|_{L^1(\O)},
	\end{equation}
	and $T_k$ maps $L^1(\O;\d)$ into itself with
	\begin{equation}\label{eq40}
	\|T_kf\|_{L^1(\O;\d)}\LEQ c(1+\|\vec U\|_{L^{n,1}})\|f\|_{L^1(\O;\d)}.
	\end{equation}
	Since $L^1(\O;\d^\a)$ is the interpolation space in the sense of Peetre between $L^1(\O;\d)$ and $L^1(\O)$, that is 
	$$L^1(\O,\d^\a)=\Big(L^1(\O;\d),L^1(\O)\Big)_{\a,1},$$
	we derive from Marcinkewicz's interpolation theorem (see \cite{Bennet-Sharpley,RakoBook}) that $T_k$ maps $L^1(\O;\d^\a)$ into itself and
	$$\|T_kf\|_{L^1(\O;\d^\a)}\LEQ c^\a(1+\|\vec U\|_{L^n,1})^\a\|f\|_{L^1(\O,\d^\a)},\quad\forall f\in L^1(\O;\d^\a).$$
	Considering the unique solution $\WT\g_{kj}$ for $j$ fixed in $\N$, of the equation 
	\begin{equation}
	\int_\O\WT\g_{kj}\Big[-\Delta\phi-\vec U\cdot\nabla\phi+V_k\phi\Big]dx=\int_\O f_j\phi dx, \quad \forall \varphi \in W_2,   \tag*{$(\ref{eq:vws Brezis})_{kj}$}
	\end{equation}
	where $f_j=\sign(f)\min(|f|,j)$, applying \Cref{t11} with the sequence $(\WT\g_{kj})_k$, and due to the uniqueness result we deduce that, when $k\to+\infty,\ \WT\g_{kj}\to \WT\g_j$  in $L^{n',\infty}(\O)$ and $\WT\g_j$ is the solution of \ref{eq2j}. Therefore, one has
	\begin{equation}\label{eq41}
	\int_\O V|\WT\g_j|\d^\a dx\LEQ\lim_{k\to+\infty}|T_kf_j|_{L^1(\O;\d^\a)}\LEQ c_\a|f_j|_{L^1(\O;\d^\a)}.
	\end{equation}
	As we have shown in the proof of Theorem \ref{thm:existence result}, $\WT\g_j$ converges to $\WT\g$ as $j\to+\infty$; we deduce the desired inequality.
\end{proof}

The proof of \Cref{t34} needs the following lemma given in Theorem 13 of \cite{DGRT}.
\begin{lemma}\label{l4}
	Let $0<\a<1,\ g\in L^1(\O;\d^\a),\ \vec U$ in $L^{\frac n{1+\a}}(\O)^n$, \eqref{eq:incompressible strong}. Then, there exists a unique solution $\OV\g\in L^{n',\infty}(\O)$ satisfying
	\begin{equation}\label{eq42}
	\int_\O\OV\g\Big[-\Delta\phi-\vec U\cdot\nabla\phi\Big]dx=\int_\O g\phi dx\qquad\forall\,\phi\in W_2.
	\end{equation}
	Moreover, there exists a constant $K(\a;\O)>0$ such that 
	\begin{equation}\label{eq43}
	\|\OV\g\|_{W_0^1L^{\frac n{n-1+\a}}(\O)}\LEQ K(\a;\O)\left(1+\|\vec U\|_{L^{\frac n{1-\a}}}\right)|g|_{L^1(\O;\d^\a)}.
	\end{equation}
\end{lemma}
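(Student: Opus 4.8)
The plan is to reproduce the scheme of Theorem~13 in \cite{DGRT}: build $\OV\g$ by approximation and transposition, the core being a duality pairing against a regularized adjoint problem whose boundary decay is tuned by the exponent $\a$. First I would regularize: set $g_j=\sign(g)\min(|g|,j)$ and pick $\vec U_j\in C_c^\infty(\O)^n$ with \eqref{eq:incompressible strong} and $\vec U_j\to\vec U$ in $L^{\frac n{1-\a}}(\O)^n$. Because $\div\vec U_j=0$, the bilinear form $a_j(v,\psi)=\int_\O\nabla v\cdot\nabla\psi+\int_\O(\vec U_j\cdot\nabla v)\psi$ is coercive on $H_0^1(\O)$ (indeed $a_j(v,v)=\int_\O|\nabla v|^2$, since $\int_\O(\vec U_j\cdot\nabla v)v=\tfrac12\int_\O\vec U_j\cdot\nabla(v^2)=0$), so Lax--Milgram gives a unique $\OV\g_j\in H_0^1(\O)$ with $-\Delta\OV\g_j+\vec U_j\cdot\nabla\OV\g_j=g_j$; integrating by parts against $\phi\in W_2\subset H_0^1(\O)$ and using $\div\vec U_j=0$ turns this into $\int_\O\OV\g_j(-\Delta\phi-\vec U_j\cdot\nabla\phi)=\int_\O g_j\phi$.

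The main step is a uniform a priori bound. With $q=\frac n{n-1+\a}$, so $q'=\frac n{1-\a}$, I would estimate $\|\nabla\OV\g_j\|_{L^q}$ by duality: given $\Phi\in C_c^\infty(\O)^n$ with $\|\Phi\|_{L^{q'}}\LEQ1$, let $w=w_\Phi\in H_0^1(\O)$ solve the (again coercive) adjoint problem $-\Delta w-\vec U_j\cdot\nabla w=-\div\Phi$. Testing the $\OV\g_j$-equation with $w$ and the $w$-equation with $\OV\g_j$, the convective contributions coincide after one integration by parts and cancel, so that $\int_\O\Phi\cdot\nabla\OV\g_j=\int_\O g_j w$, whence
$$
\Big|\int_\O\Phi\cdot\nabla\OV\g_j\Big|\;\LEQ\;\int_\O|g_j|\d^\a\,\frac{|w|}{\d^\a}\;\LEQ\;\Big(\int_\O|g_j|\d^\a\Big)\,\Big\|\frac w{\d^\a}\Big\|_{L^\infty}.
$$
Everything thus reduces to the adjoint regularity estimate $\big\|w/\d^\a\big\|_{L^\infty}\LEQ C\big(1+\|\vec U_j\|_{L^{n/(1-\a)}}\big)^{N(n,\a)}\|\Phi\|_{L^{n/(1-\a)}}$, after which taking the supremum over $\Phi$ bounds $\|\nabla\OV\g_j\|_{L^q}$.

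For that estimate I would rewrite $-\Delta w=-\div(\Phi-\vec U_j w)$ and bootstrap: from $w\in H_0^1(\O)\subset L^{2^*}(\O)$ and $\vec U_j\in L^{\frac n{1-\a}}$, H\"older places $\Phi-\vec U_j w$ in some $L^{p_1}(\O)$, the Calder\'on--Zygmund estimate for the Dirichlet Laplacian gives $\nabla w\in L^{p_1}$ hence $w$ in a strictly better Lebesgue space, with $\frac1{p_{k+1}}=\frac1{p_k}-\frac\a n$; after finitely many steps $w\in L^\infty(\O)$, and then one further step puts $\Phi-\vec U_j w\in L^{\frac n{1-\a}}$ (since $w\in L^\infty$ and $\Phi,\vec U_j\in L^{\frac n{1-\a}}$), so $w\in W^{1,\frac n{1-\a}}(\O)$ with $\frac n{1-\a}>n$, hence $w\in C^{0,\a}(\OV\O)$ by Morrey, the exponent matching because $1-\frac{n}{\,n/(1-\a)\,}=\a$; as $w=0$ on $\p\O$ this gives $|w(x)|\LEQ[w]_{C^{0,\a}}\d(x)^\a$. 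This exponent bookkeeping is exactly what forces the Sobolev exponent $\frac n{n-1+\a}$ and the drift exponent $\frac n{1-\a}$ in the statement. I expect the only delicate points to be the borderline steps of the bootstrap (where one has to argue with an exponent just above $n$) and, more annoyingly, recovering the clean first power of $(1+\|\vec U\|_{L^{n/(1-\a)}})$ claimed in \eqref{eq43}, which needs sharper tracking of constants than the crude iteration yields.

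Since $\vec U_j\to\vec U$ in $L^{\frac n{1-\a}}$, the bound on $\OV\g_j$ is uniform in $j$; extracting a weak limit $\OV\g$ in $W_0^{1,q}(\O)$ and passing to the limit in the approximate equation — using $\vec U_j\to\vec U$ in $L^{\frac n{1-\a}}$, $g_j\to g$ in $L^1(\O;\d^\a)$, and the elementary bound $|\phi|\LEQ C\d^\a$ for $\phi\in W_2$ — gives a solution of \eqref{eq42} satisfying \eqref{eq43}; moreover $\OV\g\in W_0^{1,q}(\O)\hookrightarrow L^{n/(n-2+\a)}(\O)\subset L^{n',\infty}(\O)$. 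For uniqueness I would argue by transposition exactly as in \Cref{sec:6}: if $w\in L^{n',\infty}(\O)$ satisfies $\int_\O w(-\Delta\phi-\vec U\cdot\nabla\phi)=0$ for all $\phi\in W_2$, a density/approximation argument like the one in the proof of \Cref{c1t4} extends this to all nonnegative $\phi\in W^2L^{n,1}(\O)\cap H_0^1(\O)$; solving $-\Delta\phi-\vec U\cdot\nabla\phi=\psi$ in that class (again by the same bootstrap, $\vec U\in L^{\frac n{1-\a}}$ being subcritical) for $\psi$ in a dense subset of $L^{n,1}(\O)$ then forces $\int_\O w\psi=0$, so $w=0$ a.e.
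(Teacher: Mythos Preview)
The paper does not actually prove this lemma: it simply records it as ``the following lemma given in Theorem~13 of \cite{DGRT}'' and moves on. So there is nothing to compare against here, and your proposal---which you yourself describe as reproducing the scheme of Theorem~13 in \cite{DGRT}---is a reasonable reconstruction of what that reference presumably does. The duality argument (solving the adjoint $-\Delta w-\vec U_j\cdot\nabla w=-\div\Phi$, obtaining $\int_\O\Phi\cdot\nabla\OV\g_j=\int_\O g_j w$, and then bounding $\|w/\d^\a\|_{L^\infty}$ via a Calder\'on--Zygmund/Morrey bootstrap that lands exactly at $C^{0,\a}$ because $n/(1-\a)>n$) is the natural route and your exponent bookkeeping is correct.

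Two remarks. First, the hypothesis printed in the lemma, $\vec U\in L^{\frac n{1+\a}}$, is almost certainly a typo for $L^{\frac n{1-\a}}$: the estimate \eqref{eq43} itself, as well as \Cref{t34} which invokes this lemma, both use $L^{\frac n{1-\a}}$, and your argument correctly uses the latter. Second, your own caveat about the power of $(1+\|\vec U\|_{L^{n/(1-\a)}})$ is well taken: the crude bootstrap yields a power depending on the number of iterations, so getting the first power as written in \eqref{eq43} would require either a sharper interpolation step or a direct argument that avoids iteration; since the paper only cites the result, it gives no guidance here, and for the purposes of this paper only \emph{some} finite constant depending on $\|\vec U\|_{L^{n/(1-\a)}}$ is needed.
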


\begin{proof}[Proof of \Cref{t34}]
	Let $\g$ be the unique solution (2) given by Theorem \ref{thm:well posedness in L1 delta 1 + log delta} when $f\in L^1(\O;\d^\a)$, $0<\a<1$. We set $g=V\, \g-f$. Then following Lemma \ref{l3}, one has $g\in L^1(\O;\d^\a)$ and $\g$ satisfies the same type equation (\ref{eq42}). Therefore, we can apply Lemma \ref{l4} to conclude.
\end{proof}

\section{Some consequences: principal eigenvalue and eigenfunction of $-\Delta +\vec U\cdot \protect\nabla $ and of the operator $A$, the $m$
-accretivity of $A$ and the complex Schrödinger problem in the whole space}
\label{sec:8}
\subsection{Principal eigenvalue and eigenfunction for $-\Delta+\vec U\cdot \nabla $ and the $m$-accretivity of $-\Delta+\vec U\cdot \nabla+V$}
\label{subsec:8.1}
Let us start by recalling a well-known result (see, e.g., \cite{Daners})
\begin{theorem}[Krein-Rutman's theorem]\label{tKR}
	Let $X$ be an ordered Banach space, the interior positive cone $K$ of which $\mathring K$ is non void, $T:X\to X$ a compact linear operator which is strongly positive, i.e $Tf>0$ if $f>0$.
	Then, the spectral radius of $T,\ r(T)>0$ and is a simple eigenvalue  with an eigenvector $\psi_1 \in \mathring K$.
\end{theorem}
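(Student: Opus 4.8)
I would run the classical argument for the Krein--Rutman theorem, the one non-routine point being where the hypothesis $\mathring K\neq\emptyset$ enters. Fix $e\in\mathring K$ and $\rho>0$ with $\overline{B}(e,\rho)\subset K$; this yields, on the one hand, the order--domination $-\rho^{-1}\|x\|\,e\le x\le\rho^{-1}\|x\|\,e$ for every $x\in X$ and, on the other hand (separating the point $0$ from the open convex set $\mathring K$ by Hahn--Banach), a functional $\ell$ in the dual of $X$, $\ell\neq0$, with $\ell\ge0$ on $K$, $\ell(e)=1$ and $\ell>0$ on $\mathring K$ (because $x\in\mathring K$ forces $x-se\in K$, hence $\ell(x)\ge s$, for small $s>0$). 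To see that $r(T)>0$: since $Te\in\mathring K$ one has $Te\ge\lambda_0 e$ for some $\lambda_0>0$, hence $T^n e\ge\lambda_0^n e$ by iterating the positive operator $T$, hence $\ell(T^n e)\ge\lambda_0^n$ and $\|T^n\|\ge\lambda_0^n/(\|\ell\|\,\|e\|)$; taking $n$-th roots gives $r(T)\ge\lambda_0>0$.

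That $r:=r(T)$ is itself an eigenvalue of $T$ with an eigenvector $\psi_1\in\mathring K$ is the heart of the theorem. For $\mu>r$ the resolvent $R(\mu)=(\mu I-T)^{-1}=\sum_{n\ge0}\mu^{-n-1}T^n$ is a positive operator, so $w_\mu:=R(\mu)e\ge\mu^{-1}e$ lies in $\mathring K$ and $Tw_\mu=\mu w_\mu-e$. The classical input --- and this is exactly where the cone structure is used --- is that $\mu=r$ is a singular point of $R$ that is ``seen'' by the interior vector $e$, i.e.\ $\|R(\mu)e\|\to\infty$ as $\mu\downarrow r$; then, putting $z_\mu:=w_\mu/\|w_\mu\|\in\mathring K$, one has $\mu z_\mu-Tz_\mu=e/\|w_\mu\|\to0$, and since $T$ is compact and $(z_\mu)$ is bounded a subsequence of $(Tz_\mu)$ converges, so $(z_\mu)$ converges along a subsequence to some $\psi_1$, necessarily with $\|\psi_1\|=1$, $\psi_1\in K$ and $T\psi_1=r\psi_1$. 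Strong positivity then upgrades this to $\psi_1=r^{-1}T\psi_1\in\mathring K$. (This step is precisely the content recalled from \cite{Daners}.)

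For simplicity: geometrically, let $\psi'$ be another eigenvector of $T$ for the eigenvalue $r$ with $\psi'\notin\mathbb{R}\psi_1$; then at least one of $\sup\{t\ge0:\psi_1-t\psi'\in K\}$ and $\sup\{t\ge0:\psi_1+t\psi'\in K\}$ is finite (otherwise $\pm\psi'\in K$, whence $\psi'=0$), say $t^*:=\sup\{t\ge0:\psi_1-t\psi'\in K\}<\infty$, and $t^*>0$ since $\psi_1\in\mathring K$; closedness of $K$ and maximality of $t^*$ force $\psi_1-t^*\psi'\in K\setminus\mathring K$, but this vector is again an eigenvector for $r$, so were it nonzero strong positivity would give $T(\psi_1-t^*\psi')=r(\psi_1-t^*\psi')\in\mathring K$, hence $\psi_1-t^*\psi'\in\mathring K$, a contradiction; thus $\psi'=(t^*)^{-1}\psi_1\in\mathbb{R}\psi_1$, contradicting the choice of $\psi'$, so $\dim\ker(T-rI)=1$. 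Algebraically, if $\ker(T-rI)^2\supsetneq\ker(T-rI)$, pick $w$ with $(T-rI)w=\psi_1$; then $T^n w=r^n w+n\,r^{n-1}\psi_1$, so $\ell(T^n w)=r^n\ell(w)+n\,r^{n-1}\ell(\psi_1)$, whereas order--domination by the interior point $\psi_1$ gives $w\le c\,\psi_1$ for some $c>0$, and hence $\ell(T^n w)\le c\,r^n\ell(\psi_1)$; comparing the two expressions, $n\,\ell(\psi_1)\le r\,(c\,\ell(\psi_1)-\ell(w))$ for every $n$, which is absurd since $\ell(\psi_1)>0$. Hence $r(T)$ is (algebraically) simple.

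The main obstacle is the classical input of the second paragraph --- that the spectral radius of a positive operator lies in its spectrum, and that the resolvent applied to an interior vector blows up there --- since this is where $\mathring K\neq\emptyset$ and the genuinely non-elementary part of the theory are needed; everything else is bookkeeping with the domination inequality and the positive functional $\ell$. In the applications of this theorem later in the paper the relevant cone is a cone of nonnegative functions, which is moreover normal, so the order-to-norm passages above become automatic and the argument is completely elementary.
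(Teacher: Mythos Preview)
The paper does not prove this statement at all: it is introduced with ``Let us start by recalling a well-known result (see, e.g., \cite{Daners})'' and is used as a black box in the applications of Section~8. There is therefore no ``paper's own proof'' to compare against; what you have written is a self-contained reconstruction of the classical Krein--Rutman argument, which the authors deliberately outsource to the reference.

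That said, your sketch is a faithful outline of the standard proof and is essentially correct. A couple of minor remarks. First, you are tacitly reading ``strongly positive, i.e.\ $Tf>0$ if $f>0$'' as $T(K\setminus\{0\})\subset\mathring K$; this is indeed the intended meaning (and is needed, e.g., when you write $Te\in\mathring K$ and when you upgrade $\psi_1\in K$ to $\psi_1\in\mathring K$), but it is worth saying so explicitly since the notation ``$>0$'' in ordered Banach spaces is ambiguous. Second, you are candid that the genuinely nontrivial step is the ``classical input'' that $\|R(\mu)e\|\to\infty$ as $\mu\downarrow r$; this is exactly the place where one typically invokes either normality of the cone or a more delicate spectral argument, and it is precisely what the citation to \cite{Daners} is covering. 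Your closing remark that in the paper's applications the cone is a cone of nonnegative continuous functions (hence normal, with nonempty interior in $C(\overline\Omega)$) is well taken and matches how the theorem is actually deployed in Theorem~\ref{t36}.
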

We recall the following definition of an $m$-accretive operator.
\begin{definition}[$m$-accretive operator]
	Let $X$ be a Banach space. A linear unbounded operator $$A:D(A)\subset X\to X$$
	is called accretive if
	\begin{enumerate}
		\item  $\forall\,\WT\g\in D(A)$ and $\forall\,\lambda>0$ it holds that $ \|\WT\g\|_X\LEQ\|\WT\g+\lambda A\,\WT \g\|_X$.
	\end{enumerate}
	The operator is called $m$-accretive if it is accretive and
	\begin{enumerate} [resume]
				\item $\forall \lambda>0$ we have that $ \overline{D(A)} \subset R(I+\lambda A)$.
	\end{enumerate}
\end{definition}

Let us consider $\vec U\in L^{p,1}(\O)^n,\ p>n$ (or in $L^{n,1}(\O)^n$ but with a small norm as in \cite{DGRT}), we define a compact operator $$T:C(\OV\O)\to W^1_0L^{p,1}(\O)\hookrightarrow C(\OV \O)$$ by setting
$$
	Tf=\g
	\hbox{ if and only if }
	\begin{cases}
		-\Delta\g-\vec U\cdot \nabla \g=f
		\\
		\g\in W^1_0L^{p,1}(\O),p>n
	\end{cases}
$$
(the existence, uniqueness and regularity of $\g$ in given in \cite{DGRT}).
Using the Bony's maximum principle or Stapamcchia's argument, we have for $f>0$ the solution $\g>0$.
Since the positive cone $K=C_+(\OV\O)=\{\f\in C(\OV\O):\f\GEQ0 \}$ has its interior $\mathring K$ non void, we may apply the Krein-Rutman's theorem (see Theorem \ref{t32}) to derive the
\begin{theorem}\label{t36}
	There exist a real $\lambda_1>0$ and a positive function $\psi_1\in W^2L^{p,1}(\O)\cap H^0_1(\O)$ such that 
	$$-\Delta\psi_1-\vec U\cdot \nabla \psi_1=\lambda_1\psi_1.$$
	Moreover, $L^1(\O;\d)\hookrightarrow L^1(\O;\psi_1)$ and if $\vec U\in L^\infty(\O)^n$ then $\psi_1\GEQ c\d$ so that $$L^1(\O;\d)=L^1(\O;\psi_1).$$
\end{theorem}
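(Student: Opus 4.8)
The plan is to build the operator $T$ so that its spectral radius plays the role of $\lambda_1^{-1}$, and to deduce everything else from Krein--Rutman. First I would verify that $T$ is well-defined and compact on $C(\overline\Omega)$: given $f\in C(\overline\Omega)\subset L^{p,1}(\Omega)$, the existence, uniqueness and $W^2L^{p,1}(\Omega)\cap H^1_0(\Omega)$-regularity of the solution $u=Tf$ of $-\Delta u-\vec U\cdot\nabla u=f$ is exactly the regularity theory cited from \cite{DGRT} (valid for $\vec U\in L^{p,1}$, $p>n$, or $\vec U\in L^{n,1}$ with small norm). Since $p>n$ gives a compact embedding $W^1_0L^{p,1}(\Omega)\hookrightarrow C(\overline\Omega)$ (via Morrey-type estimates, $u$ being Hölder continuous), the operator $T:C(\overline\Omega)\to C(\overline\Omega)$ is compact. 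Strong positivity, $Tf>0$ whenever $f>0$ (here $f>0$ means $f\ge 0$, $f\not\equiv 0$), follows from the maximum principle for $-\Delta-\vec U\cdot\nabla$; since $\vec U\in L^{p,1}\subset L^n$, this is precisely where one invokes the Bony maximum principle (for the strong solution in $W^2L^{p,1}$) or a Stampacchia truncation argument, together with a Hopf-type boundary estimate to get strict positivity in the interior and the correct boundary behaviour. This is the step I expect to be the main obstacle, because strict positivity at the boundary (needed for $\psi_1\ge c\delta$) requires a quantitative Hopf lemma compatible with the low integrability of $\vec U$, not just the weak maximum principle.

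Next I would apply \Cref{tKR} with $X=C(\overline\Omega)$ and $K=C_+(\overline\Omega)$, whose interior $\mathring K$ (the strictly positive functions) is non-void. Krein--Rutman then yields a spectral radius $r(T)>0$, simple, with eigenvector $\psi_1\in\mathring K$, i.e. $T\psi_1=r(T)\psi_1$. Setting $\lambda_1:=1/r(T)>0$ and unwinding the definition of $T$ gives $-\Delta\psi_1-\vec U\cdot\nabla\psi_1=\lambda_1\psi_1$, with $\psi_1\in W^2L^{p,1}(\Omega)\cap H^1_0(\Omega)$ and $\psi_1>0$ in $\Omega$ by construction. Because $\psi_1=\lambda_1 T\psi_1$ and $\psi_1\in C(\overline\Omega)$ is a fixed point, one re-reads the regularity statement once more to confirm $\psi_1$ lands in $W^2L^{p,1}$.

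For the comparison with $\delta$, the inclusion $L^1(\Omega;\delta)\hookrightarrow L^1(\Omega;\psi_1)$ is the easy direction: since $\psi_1\in C^1(\overline\Omega)$ (as $\psi_1\in W^2L^{p,1}$ with $p>n$ embeds into $C^{1,\gamma}$) and $\psi_1=0$ on $\partial\Omega$, one has $\psi_1\le C\delta$ on $\Omega$ by the mean value theorem along normals, hence $\int_\Omega|g|\psi_1\le C\int_\Omega|g|\delta$. For the reverse inclusion under the extra hypothesis $\vec U\in L^\infty(\Omega)^n$, I would prove $\psi_1\ge c\,\delta$ near $\partial\Omega$: here $\vec U\cdot\nabla\psi_1$ is bounded (as $\nabla\psi_1$ is bounded), so $-\Delta\psi_1=\lambda_1\psi_1+\vec U\cdot\nabla\psi_1\ge -C$ on $\Omega$, and one compares $\psi_1$ from below with a multiple of the torsion function or of $\varphi_1$ (the first Dirichlet eigenfunction of $-\Delta$), using the interior strict positivity of $\psi_1$ away from $\partial\Omega$ and the Hopf lemma near $\partial\Omega$ to get $\partial\psi_1/\partial\nu<0$ uniformly; this yields $c\,\delta\le\psi_1$ on a neighbourhood of $\partial\Omega$ and, combined with $\psi_1\ge c'>0$ on compact subsets of $\Omega$, on all of $\Omega$. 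Then $\int_\Omega|g|\delta\le C\int_\Omega|g|\psi_1$ as well, giving $L^1(\Omega;\delta)=L^1(\Omega;\psi_1)$, which completes the proof.
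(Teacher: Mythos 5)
Your proposal follows essentially the same route as the paper: define the solution operator $T$ on $C(\overline\Omega)$, get compactness from the embedding $W^1_0L^{p,1}(\Omega)\hookrightarrow C(\overline\Omega)$ for $p>n$, strong positivity from Bony's maximum principle or Stampacchia's argument, apply Krein--Rutman (\Cref{tKR}) with $\lambda_1=1/r(T)$, and deduce $L^1(\Omega;\delta)\hookrightarrow L^1(\Omega;\psi_1)$ from the bound $\psi_1\le \|\nabla\psi_1\|_\infty\,\delta\le c\|\psi_1\|_{W^2L^{p,1}}\,\delta$. You actually give more detail than the paper on the final claim $\psi_1\ge c\delta$ for $\vec U\in L^\infty$ (via Hopf's lemma and comparison), which the paper only asserts.
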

\begin{remark}
	The fact that $L^1(\O;\d)\hookrightarrow L^1(\O;\psi_1)$ comes from the fact 
	$$0<\psi_1(x)\LEQ\d(x)\|\nabla\psi_1\|_\infty\LEQ c\|\psi_1\|_{W^2L^{p,1}}\d(x)<+\infty,\ x\in\O.$$
\end{remark}
Next, we want to prove Theorem \ref{t32} concerning the $m$-accretivity of $A=-\Delta+\vec U\cdot \nabla+V$ in the Banach space $L^1(\O;\d^\a),\ 0\LEQ\a\LEQ1$.
The argument is similar to the one given in \cite{RakoADE}.\\
First, we endow the space $L^1(\O;\d^\a)$ with the following equivalent norm 
$$\|f\|_\a=\int_\O|f(x)|\psi^\a_1(x)dx,$$
with $\psi_1$ given in Theorem \ref{t36}.
We shall introduce the following definition
\begin{definition}\label{d36}
	Let $\OV\g$ be in $L^1(\O,;\d^\a)$ with $ V\OV\g\in L^1(\O;\d^\a)$. We will say that $A\OV\g\in L^1(\O;\d^\a)$ if there exists a function $f\in L^1(\O;\d^\a)$ such that $A\OV\g=f$ and 
	\begin{equation}\label{eq45}
	\int_\O\phi fdx=\int_\O\OV\g\Big[-\Delta\phi-\vec U\cdot \nabla \phi+V\phi\Big]dx,\quad\forall\,\phi\in C^2_c(\O).
	\end{equation}
\end{definition}
Here, $V\GEQ0$ locally integrable and $\vec U$ is as in Theorem \ref{t1}.
When $\vec U = 0$ and $0 \le V \in L^\infty (\Omega)$ then we choose $D(A) \subset W_0^{1,1} (\Omega)$. In this setting, in which traces exist, previous results apply (see, e.g., \cite{Fila-Souplet-Weissler}). However, when $V \ge c \delta^{ -2}$ (our main case of interest due to the Schrödinger equation) we can no longer expect that $D(A) \subset W_0^{1,1} (\Omega)$. Nonetheless, we have shown that $D(A) \subset L^1 (\Omega; \delta^{-1})$, a space which \emph{also} acts as having a Dirichlet boundary condition on $\partial \Omega$.\\

We can define the operator $A:D(A)\subset L^1(\O;\d^\a)\to L^1(\O;\d^\a)$, where the domain of $A$ is
$$D(A)=\Big\{\OV \g\in L^{n',\infty}(\O)\cap L^1(\O;\d^{-1})\cap L^1(\O;V\d):A\OV\g\in L^1(\O;\d^\a)\Big\}.$$
Therefore, we always have $C^2_c(\O)\subset D(A)\subset L^1(\O;\d^\a)$ this implies that $D(A)$ is dense in $L^1(\O;\d^\a),\ 0\LEQ \a\LEQ1$.
Moreover, one has the :
\begin{lemma}\label{l5}
	Let $V\GEQ0$, locally integrable, $\vec U \in L^\infty (\Omega)$ be such that \eqref{eq:incompressible strong} and $0\LEQ\a<1$. Then, for all $\lambda>0$ and $f\in L^1(\O;\d^\a)$, there exists a unique function $\g\in D(A)$ such that 
	$$\g+\lambda A\,\g=f.$$
\end{lemma}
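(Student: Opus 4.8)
The plan is to rewrite the resolvent equation as a Schr\"odinger problem already solved above, with a shifted potential. Since $Au=-\Delta u+\vec U\cdot\nabla u+Vu$, dividing $u+\lambda Au=f$ by $\lambda$ shows that it is exactly the very weak equation
\begin{equation*}
-\Delta u+\vec U\cdot\nabla u+\widetilde V\,u=\widetilde f\quad\text{in }\Omega,\qquad
\widetilde V:=V+\tfrac1\lambda\GEQ0,\qquad \widetilde f:=\tfrac1\lambda f\in L^1(\Omega;\d^\a).
\end{equation*}
The potential $\widetilde V$ is still nonnegative and locally integrable, hence admissible in the earlier results. Moreover, since $\Omega$ is bounded, $\vec U\in L^\infty(\Omega)^n$ implies $\vec U\in L^{p,1}(\Omega)^n$ for every $p<\infty$ and $\vec U\in L^{\frac n{1-\a}}(\Omega)^n$; together with \eqref{eq:incompressible strong} this means that $(\widetilde V,\vec U)$ satisfies \eqref{eq:H with p greater n}.

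For existence I would apply \Cref{c1t29} (when $\a=0$) or \Cref{t34} (when $0<\a<1$) to the datum $\widetilde f$ and potential $\widetilde V$. Either way one obtains $u\in L^{n',\infty}(\Omega)\cap L^1(\Omega;\d^{-1})$ satisfying \eqref{eq:vws Brezis} with $\widetilde V$, together with $\int_\Omega\widetilde V|u|\d^\a\LEQ c_\a\int_\Omega|\widetilde f|\d^\a<+\infty$ (for $\a=0$ even $\int_\Omega\widetilde V|u|\LEQ C\int_\Omega|\widetilde f|$). From $\widetilde Vu\d\in L^1(\Omega)$ (part of \eqref{eq:vws Brezis}) and $u\d\in L^1(\Omega)$ one gets $Vu\d=\widetilde Vu\d-\tfrac1\lambda u\d\in L^1(\Omega)$, i.e.\ $u\in L^1(\Omega;V\d)$. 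Subtracting $\tfrac1\lambda\int_\Omega u\phi$ from the weak identity gives, for every $\phi\in C^2_c(\Omega)\subset W_2$,
\begin{equation*}
\int_\Omega u\big(-\Delta\phi-\vec U\cdot\nabla\phi+V\phi\big)\,dx=\int_\Omega\tfrac1\lambda(f-u)\,\phi\,dx.
\end{equation*}
Since $u\in L^{n',\infty}(\Omega)\subset L^1(\Omega)\subset L^1(\Omega;\d^\a)$ and $f\in L^1(\Omega;\d^\a)$, the density $\tfrac1\lambda(f-u)$ belongs to $L^1(\Omega;\d^\a)$; hence, in the sense of \Cref{d36}, $Au=\tfrac1\lambda(f-u)\in L^1(\Omega;\d^\a)$, so that $u\in D(A)$ and $u+\lambda Au=f$.

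For uniqueness, let $u_1,u_2\in D(A)$ both solve the equation and set $w:=u_1-u_2\in L^{n',\infty}(\Omega)\cap L^1(\Omega;\d^{-1})$. Then $w$ is a very weak distributional solution of $-\Delta w+\vec U\cdot\nabla w+\widetilde Vw=0$ with $\widetilde Vw\d=Vw\d+\tfrac1\lambda w\d\in L^1(\Omega)$; by \Cref{lem:equivalence} it also satisfies \eqref{eq:vws Brezis} for $\widetilde V$ with datum $0$, and \Cref{thm:uniqueness L1 Omega delta}, applied with the potential $\widetilde V$, forces $w=0$. (Alternatively, \Cref{c1t4} with $G(x,\s)=\widetilde V(x)\s$, which satisfies the sign condition, yields $w\LEQ0$ and $w\GEQ0$.)

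All the analytic work is borrowed from \Cref{c1t29}, \Cref{t34}, \Cref{lem:equivalence} and \Cref{thm:uniqueness L1 Omega delta}, so I do not expect a genuinely new obstacle. The one point requiring care is the bookkeeping that matches the regularity of the solution produced to the precise definition of $D(A)$ in \Cref{d36}---in particular that $Au$, defined through $C^2_c(\Omega)$ test functions, is represented by an $L^1(\Omega;\d^\a)$ function---which is exactly where the facts $u\in L^1(\Omega)$, $f\in L^1(\Omega;\d^\a)$ and $\widetilde Vu\d\in L^1(\Omega)$ (see also the estimate of \Cref{l3}) come in.
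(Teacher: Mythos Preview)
Your argument is correct and follows essentially the same route as the paper: absorb the zero-order term into a shifted potential $\widetilde V=V+\tfrac1\lambda$, invoke the well-posedness results already proved for \eqref{eq:vws Brezis}, and then check that the resulting $u$ lies in $D(A)$. The only cosmetic difference is that the paper cites a single theorem---\Cref{thm:well posedness in L1 delta 1 + log delta}, via the inclusion $L^1(\Omega;\d^\a)\subset L^1(\Omega;\d(1+|\log\d|))$ for $0\le\a<1$---while you split into the cases $\a=0$ (\Cref{c1t29}) and $0<\a<1$ (\Cref{t34}); either choice works under the hypothesis $\vec U\in L^\infty(\Omega)^n$.
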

\begin{proof}
Indeed, since $L^1(\O;\d^\a)\subset L^1\big(\O;\d(1+|\Log\d|)\big)$, we may apply Theorem \ref{thm:well posedness in L1 delta 1 + log delta} to derive that for all $\lambda>0$ all $f\in L^1(\O;\d^\a)$ we have a unique function $\g\in L^{n',\infty}(\O)$ with $\dfrac\g\d\in L^1(\O)$,\ $V\g\in L^1(\O;\d^\a)$ and for all $\phi\in W^2L^{n,1}(\O)\in H^1_0(\O)$,
\begin{equation}\label{eq46}
\int_\O f\phi dx=\int_\O\g\big[\phi+\lambda(-\Delta\phi-\vec U\cdot\nabla \phi +V\phi)\Big]dx.
\end{equation}
This is equivalent to say that $\g+\lambda A\,\g=f$ and $\g\in D(A)$.
\end{proof}

So for $0\LEQ\a<1$, it remains to show that for all $\OV\g\in D(A),$ for all $\lambda>0$
\begin{equation}\label{eq47}
\|\OV\g\|_\a\LEQ\|\OV\g+\lambda A\,\OV\g\|_\a.
\end{equation}
That is to say, setting $f=\OV\g+\lambda A\,\OV\g$,
\begin{equation}\label{eq48}
\int_\O|\OV\g|\psi_1^\a dx\LEQ\int_\O|f|\psi_1^\a dx.
\end{equation}
To prove such inequality, we introduce as in \cite{RakoADE} the 
\begin{lemma}\label{l6}
	Let $\eps>0$, $0 \le \alpha \le 1$ and let
	\begin{equation}
		\psi_{1\eps}=(\psi+\eps)^\a-\eps^\a\in W^2L^{n,1}(\O)\cap H^1_0(\O).
	\end{equation}
	Then, for all $\OV\g\in L^{n',\infty}(\O), \ \OV\g\GEQ0$, one has
	\begin{equation}\label{eq49}
	J_\eps=\int_\O\OV\g\left[-\Delta \psi_{1\eps}-\vec U\cdot\nabla \psi_{1\eps}\right]dx\GEQ0.
	\end{equation}
\end{lemma}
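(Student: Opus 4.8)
The plan is to prove \Cref{l6} by computing $-\Delta\psi_{1\eps}-\vec U\cdot\nabla\psi_{1\eps}$ explicitly and showing that, against a nonnegative test weight, it is a nonnegative measure — at least when tested against $\OV\g\GEQ0$. First I would note that $\psi_{1\eps}=(\psi_1+\eps)^\a-\eps^\a$ vanishes on $\p\O$ (since $\psi_1=0$ there), is smooth up to the boundary because $\psi_1+\eps\GEQ\eps>0$, and hence indeed lies in $W^2L^{n,1}(\O)\cap H^1_0(\O)$, so it is an admissible test function in the sense of \eqref{eq45}. A direct chain-rule computation gives
\begin{align}
\nabla\psi_{1\eps}&=\a(\psi_1+\eps)^{\a-1}\nabla\psi_1,\nonumber\\
\Delta\psi_{1\eps}&=\a(\psi_1+\eps)^{\a-1}\Delta\psi_1+\a(\a-1)(\psi_1+\eps)^{\a-2}|\nabla\psi_1|^2.\nonumber
\end{align}
Using the eigenvalue equation $-\Delta\psi_1-\vec U\cdot\nabla\psi_1=\lambda_1\psi_1$ from \Cref{t36}, I would rewrite
\begin{align}
-\Delta\psi_{1\eps}-\vec U\cdot\nabla\psi_{1\eps}
&=\a(\psi_1+\eps)^{\a-1}\big(-\Delta\psi_1-\vec U\cdot\nabla\psi_1\big)+\a(1-\a)(\psi_1+\eps)^{\a-2}|\nabla\psi_1|^2\nonumber\\
&=\a\lambda_1(\psi_1+\eps)^{\a-1}\psi_1+\a(1-\a)(\psi_1+\eps)^{\a-2}|\nabla\psi_1|^2.\nonumber
\end{align}
Since $0\LEQ\a\LEQ1$, $\lambda_1>0$, $\psi_1>0$ in $\O$ and $(\psi_1+\eps)>0$, every term on the right is pointwise nonnegative; multiplying by $\OV\g\GEQ0$ and integrating gives $J_\eps\GEQ0$, which is exactly \eqref{eq49}.

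There is one subtlety to address carefully: the quantity $-\Delta\psi_{1\eps}-\vec U\cdot\nabla\psi_{1\eps}$ appearing in $J_\eps$ must be interpreted in the same weak sense as in \Cref{t4} and \eqref{eq45}, namely that $\psi_{1\eps}$ is used as the test function $\phi$ and the integral is really $\int_\O\OV\g\,L^*\psi_{1\eps}$. Because $\psi_1\in W^2L^{p,1}(\O)\cap H^1_0(\O)$ with $p>n$, the regularity of $\psi_{1\eps}$ is good enough — in particular $\Delta\psi_{1\eps}\in L^{n,1}(\O)$ after checking that the singular-looking factor $(\psi_1+\eps)^{\a-2}|\nabla\psi_1|^2$ is in fact bounded by $\eps^{\a-2}\|\nabla\psi_1\|_\infty^2$ — so the pairing with $\OV\g\in L^{n',\infty}(\O)$ is well defined by Hölder duality between $L^{n,1}$ and $L^{n',\infty}$. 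I would also record that $\psi_{1\eps}\GEQ0$ on $\O$, which is what makes it legitimate as the "$\phi\GEQ0$" test function in the eventual application of \Cref{t4} and \Cref{c1t4} to deduce \eqref{eq48}.

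The only real obstacle I anticipate is not the inequality itself — which is essentially the convexity/concavity sign of $s\mapsto s^\a$ combined with the sign of $\lambda_1$ — but making the weak formulation airtight for general locally integrable $V$: one wants $J_\eps$ to be the boundary-term-free object that plugs into Kato's inequality, so I would be careful to state $J_\eps$ purely in terms of $L^*\psi_{1\eps}=-\Delta\psi_{1\eps}-\vec U\cdot\nabla\psi_{1\eps}$ with no integration by parts that would generate a boundary contribution. Once \eqref{eq49} is in hand, the passage $\eps\to0$ (to recover a statement about $\psi_1^\a$) will follow by dominated convergence using the uniform bound $0\LEQ\psi_{1\eps}\LEQ\psi_1^\a$ and the fact that $\OV\g\psi_1^{\a-1}|\nabla\psi_1|^2\in L^1_{loc}$; but since the lemma as stated only asserts $J_\eps\GEQ0$ for fixed $\eps>0$, the computation above already suffices.
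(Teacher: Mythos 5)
Your proof is correct and follows essentially the same route as the paper: expand $-\Delta\psi_{1\eps}-\vec U\cdot\nabla\psi_{1\eps}$ by the chain rule, substitute the eigenvalue identity $-\Delta\psi_1-\vec U\cdot\nabla\psi_1=\lambda_1\psi_1$ from \Cref{t36}, and conclude from $\a\in[0,1]$, $\lambda_1>0$, $\psi_1>0$ and $\OV\g\GEQ0$ that both resulting integrals are nonnegative. The extra remarks on the regularity of $\psi_{1\eps}$ and the $L^{n,1}$--$L^{n',\infty}$ duality pairing are sound and, if anything, slightly more careful than the paper's one-line computation.
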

\begin{proof}
We develop the term $-\Delta\psi_{1\eps}-\vec U\cdot\psi_{1\eps}$ to derive the
\begin{align*}
	J_\eps&=\a\int_\O\OV\g\Big[-\Delta\psi_1-\vec U\cdot\nabla\psi_1\Big](\psi_1+\eps)^{\a-1}dx+\a(1-\a)\int_\O|\nabla\psi_1|^2(\psi_1+\eps)^{\a-2}\OV\g dx\\
	&=\a\lambda_1\int_\O\OV\g\psi_1(\psi_1+\eps)^{\a-1}dx+\a(1-\a)\int_\O|\nabla\psi_1|^2(\psi_1+\eps)^{\a-2}\OV\g dx\GEQ0.
\end{align*}
\end{proof}
Let us decompose $f=f_+-f_-$, $f_+\in L^1(\O;\d^\a),\ f_-\in L^1(\O;\d^\a)$. By Theorem \ref{thm:well posedness in L1 delta 1 + log delta}, we know that we have $\g_1\in D(A)$ (resp. $\g_2\in D(A)$ such that
\begin{equation}\label{eq50}
\g_1+\lambda A\,\g_1=f_+\qquad\g_2+\lambda A\,\g_2=f_-.
\end{equation}
So by linearity and uniqueness, one has
\begin{equation}\label{eq51}
\OV\g=\g_1-\g_2.
\end{equation}
Therefore, it suffices to show that the inequality (\ref{eq48}) holds for $\g_1$ (resp. $\g_1$). That is to say that is sufficient to prove the inequality for $f\GEQ0$. But in that case, the unique solution of (\ref{eq46}) is non negative : $\OV\g\GEQ0$ and we can choose as a test function $\phi=\psi_{1\eps}$ given in Lemma \ref{l6}. We then have
\begin{equation}\label{eq52}
\int_\O f\psi_{1\eps}dx=\int_\O\OV\g\psi_{1\eps}dx+\lambda\int_\O\OV\g\Big[-\Delta\psi_{1\eps}-\vec U\cdot \nabla \psi_{1\eps}]dx+\lambda\int_\O V\,\psi_{1\eps}\OV\g dx.
\end{equation}
According to Lemma \ref{l6} and the fact that $V\,\g\,\psi_{1\eps}\GEQ0$ the two last integrals in relation (\ref{eq52}) are non negative. Therefore,
\begin{equation}\label{eq53}
\int_\O f\psi_{1\eps} dx\GEQ\int_\O\OV\g\,\psi_{1\eps}dx,\qquad\eps>0.
\end{equation}
Letting $\eps\to0$ in (\ref{eq53}), we obtain
\begin{equation}\label{eq54}
	\int_\O\OV\g\,\psi_1^\a dx\LEQ \int_\O f\psi_1^\a dx
\end{equation}
whenever $f\in\OV\g+\lambda A\,\OV\g,\ \OV\g\in D(A)$.\\

We have shown that the Schoedinger operator $A=-\Delta +\vec U\cdot \nabla +V$ is $m$-accretive in $L^1(\O,\d^\a)$, whenever $0\LEQ\a<1$, as in the first statement of  Theorem \ref{t32}.\qed\\%\end{proof} 

We have a similar result in $L^1(\O;\d)$ provided that $V(x)\GEQ c\d(x)^{-2}$ in a neighborhood $U$ of the boundary. The argument is similar to the preceding one but we need to replace the use of Theorem \ref{thm:well posedness in L1 delta 1 + log delta} by Theorem  \ref{t31}. Indeed, if $f=f_+-f_-\in L^1(\O;\d)$ and $\OV\g\in D(A)$ satisfies $\OV\g+\lambda A\,\OV\g=f$ then, Theorem \ref{t31} allows us to spleet $\OV\g=\g_2-\g_1$ with $\g_i\in D(A)$ and $\g_1+\lambda A\,\g_1=f_+$ (idem $\g_2+\lambda A\,\g_2=f_-$). therefore, it suffices to show the inequality
$$\int_\O\OV\g\,\psi_1dx\LEQ\int_\O f\psi_1dx\hbox{ for }f\GEQ0,\ \OV\g\GEQ0.$$

To do so, we choose $\phi=\psi_1$ in equation  (\ref{eq46}) and derive
\begin{equation}\label{eq55}
\int_\O f\psi_1dx=(1+\lambda\lambda_1)\int_\O\OV\g\psi_1 dx+\int_\O V\OV\g\,\psi_1dx.
\end{equation}
We drop the nonnegative term with $V$ to derive
\begin{equation}\label{eq56}
\int_\O\g\psi_1dx\LEQ\frac1{1+\lambda\lambda_1}\int_\O f\psi_1dx\LEQ\int_\O f\psi_1dx.
\end{equation}
This show the desired inequality and implies that \\$\forall\,\lambda>0,\ \forall\,\OV\g\in D(A),\ \OV\g+\lambda A\,\OV\g=f\in L^1(\O;d)$
\begin{equation}\label{eq57}
\int_\O|\OV\g|\psi_1 dx\LEQ\int_\O|\OV\g+\lambda A\,\OV\g|\psi_1dx.
\end{equation}
\ \HF
Therefore, we have shown the following theorem :

\begin{theorem}\label{t32}
	Let $\vec U\in L^\infty(\O)^n$ such that \eqref{eq:incompressible strong} and $V\GEQ0$ locally integrable. Then the Schrödinger operator 
	$$
		Au=-\Delta u+\vec U\cdot\nabla u+V u, \qquad \textrm{ for } u \in D(A)
	$$
	is $m$-accretive in $L^1(\O;\d^\a)$ for any $0\LEQ\a<1.$
	If $\a=1$ and  $V(x)\GEQ c\d(x)^{-2}$ in a neighborhood $U$ of the boundary then the operator $A$ is still $m$-accretive in $L^1(\O;\d)$.
\end{theorem}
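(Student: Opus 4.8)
The plan is to verify the two defining properties of $m$-accretivity (accretiveness and the range condition $\overline{D(A)} \subset R(I + \lambda A)$) separately in the two regimes $0 \le \alpha < 1$ and $\alpha = 1$, working throughout with the equivalent norm $\|f\|_\alpha = \int_\Omega |f| \psi_1^\alpha\,dx$ built from the principal eigenfunction $\psi_1$ of \Cref{t36}. The density of $D(A)$ in $L^1(\Omega;\delta^\alpha)$ follows immediately from $\mathcal C^2_c(\Omega) \subset D(A)$, so $\overline{D(A)} = L^1(\Omega;\delta^\alpha)$ and the range condition becomes: for every $\lambda > 0$ and every $f \in L^1(\Omega;\delta^\alpha)$ there is $u \in D(A)$ with $u + \lambda A u = f$. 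For $0 \le \alpha < 1$ this is exactly \Cref{l5}, which in turn reduces via $L^1(\Omega;\delta^\alpha) \subset L^1(\Omega;\delta(1+|\log\delta|))$ to \Cref{thm:well posedness in L1 delta 1 + log delta}; for $\alpha = 1$ we instead invoke \Cref{t31} (here the hypothesis $V \ge c\delta^{-2}$ near $\partial\Omega$ is essential to get well-posedness in $L^1(\Omega;\delta^{-1})$).

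For the accretiveness inequality $\|u\|_\alpha \le \|u + \lambda A u\|_\alpha$, set $f = u + \lambda A u$ and decompose $f = f_+ - f_-$ with $f_\pm \in L^1(\Omega;\delta^\alpha)$ nonnegative. By the well-posedness result (and the sign property "$f \ge 0 \Rightarrow u \ge 0$" from \Cref{thm:existence result}), there are nonnegative $u_1, u_2 \in D(A)$ solving $u_i + \lambda A u_i = f_\pm$, and by linearity and uniqueness $u = u_1 - u_2$. Hence it suffices to prove $\int_\Omega u\,\psi_1^\alpha \le \int_\Omega f\,\psi_1^\alpha$ in the case $f \ge 0$, $u \ge 0$. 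Now I would plug the admissible test function $\phi = \psi_{1\eps} = (\psi_1 + \eps)^\alpha - \eps^\alpha \in W^2 L^{n,1}(\Omega) \cap H^1_0(\Omega)$ (from \Cref{l6}) into the weak formulation \eqref{eq46}, obtaining
\begin{equation*}
\int_\Omega f\,\psi_{1\eps}\,dx = \int_\Omega u\,\psi_{1\eps}\,dx + \lambda \int_\Omega u\big[-\Delta\psi_{1\eps} - \vec U \cdot \nabla\psi_{1\eps}\big]\,dx + \lambda\int_\Omega V\,\psi_{1\eps}\,u\,dx.
\end{equation*}
The second integral on the right is $\ge 0$ by \Cref{l6} (this is where the concavity $0 \le \alpha \le 1$, the equation $-\Delta\psi_1 - \vec U\cdot\nabla\psi_1 = \lambda_1\psi_1$, and $\lambda_1 > 0$ enter), and the third is $\ge 0$ since $V, \psi_{1\eps}, u \ge 0$. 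Therefore $\int_\Omega f\,\psi_{1\eps} \ge \int_\Omega u\,\psi_{1\eps}$, and letting $\eps \to 0$ (using $\psi_{1\eps} \to \psi_1^\alpha$ pointwise together with a domination/monotone convergence argument, which is legitimate because $f, u \in L^1(\Omega;\psi_1^\alpha)$) yields $\int_\Omega u\,\psi_1^\alpha \le \int_\Omega f\,\psi_1^\alpha$, hence $\|u\|_\alpha \le \|f\|_\alpha$. For $\alpha = 1$ the same scheme applies but one may use directly $\phi = \psi_1$: testing \eqref{eq46} gives $\int_\Omega f\psi_1 = (1 + \lambda\lambda_1)\int_\Omega u\psi_1 + \lambda\int_\Omega V u\psi_1$, and dropping the nonnegative $V$-term gives even the sharper bound $\int_\Omega u\psi_1 \le \frac{1}{1+\lambda\lambda_1}\int_\Omega f\psi_1 \le \int_\Omega f\psi_1$.

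The main obstacle — and the only genuinely delicate point — is ensuring that the chosen test functions are truly admissible in the weak formulation used for $D(A)$ and that the passage $\eps \to 0$ is rigorous. Concretely: one must confirm $\psi_{1\eps} \in W^2 L^{n,1}(\Omega) \cap H^1_0(\Omega)$ (this uses the regularity $\psi_1 \in W^2 L^{p,1}(\Omega)$ with $p > n$ from \Cref{t36} together with the boundedness of $\vec U$, so that the products and the term $|\nabla\psi_1|^2(\psi_1+\eps)^{\alpha-2}$ behave well — note $(\psi_1+\eps)^{\alpha-2}$ is bounded for fixed $\eps > 0$), and that the class of test functions $W^2 L^{n,1}(\Omega) \cap H^1_0(\Omega)$ is compatible with the formulation \eqref{eq:vws Brezis}/\eqref{eq46} valid for $D(A)$, which was already established in the earlier density arguments of \Cref{sec:6}. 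The limit $\eps \to 0$ requires a uniform-in-$\eps$ integrable majorant: since $0 \le \psi_{1\eps} \le \psi_1^\alpha$ pointwise (for $0 \le \alpha \le 1$) and $u, f \in L^1(\Omega;\psi_1^\alpha)$ by construction, dominated convergence closes the argument. Assembling these pieces gives both $m$-accretivity statements of \Cref{t32}. \hfill$\diamondsuit$
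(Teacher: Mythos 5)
Your proposal is correct and follows essentially the same route as the paper: density of $D(A)$ from $\mathcal C^2_c(\Omega)\subset D(A)$, the range condition via \Cref{l5} (resp.\ \Cref{t31} for $\alpha=1$), the decomposition $f=f_+-f_-$ with reduction to the sign-definite case, and the accretivity estimate obtained by testing with $\psi_{1\eps}=(\psi_1+\eps)^\alpha-\eps^\alpha$ together with \Cref{l6} (resp.\ with $\psi_1$ itself when $\alpha=1$). Your justification of the limit $\eps\to0$ via the pointwise bound $0\le\psi_{1\eps}\le\psi_1^\alpha$ and dominated convergence is in fact slightly more explicit than the paper's.
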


%This ends the proof of Theorem \ref{t32}.\HF
The operator $A$ is also $m$-accretive in $L^p(\O; \d^ \alpha)$ when $\vec U = 0$ for $p \in (1,+\infty]$ and $\alpha \in [0,1]$. The result for the case $\alpha = 0$ was already proved by Brezis and Strauss \cite{Brezis-Strauss} for bounded potentials.

\begin{theorem} \label{thm:8.4}
	Let $p \in (1, + \infty]$. Assume that 
	\begin{equation} \label{eq:H star}
		\begin{dcases} 
			\alpha \in [0,1], \\
			\textrm{ and } \vec U = 0 
		\end{dcases}
		\qquad \textrm{ or } \qquad 
		\begin{dcases}
			\alpha = 0 , \\
			\textrm{ and } \eqref{eq:H with p greater n}.
		\end{dcases}
	\end{equation}
	Let $f \in L^p (\Omega, \delta^\alpha)$,  $0 \le V \in L^1 _{loc} (\Omega)$ and let $u \in D(A)$ be the unique solution of the equation 
	\begin{equation} \label{eq:+}
		A u + u = f .
	\end{equation}
	Then
	\begin{equation}
		\| u \|_{L^ p (\Omega; \delta^ \alpha)} \le \| f \|_{L^ p (\Omega; \delta^ \alpha)}.
	\end{equation}
\end{theorem}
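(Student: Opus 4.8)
The plan is to prove the $L^p$ contraction estimate $\|u\|_{L^p(\Omega;\delta^\alpha)} \le \|f\|_{L^p(\Omega;\delta^\alpha)}$ by testing the equation \eqref{eq:+} against a carefully chosen nonlinear function of $u$ built from the eigenfunction $\psi_1$, following the same philosophy as the proof of \Cref{t32}. For finite $p$, I would use the test function $\phi = \psi_{1\eps}\, |u|^{p-1}\sign(u)$ (suitably truncated at level $k$, i.e. replacing $|u|^{p-1}$ by $|T_k(u)|^{p-1}$, to stay in an admissible class), where $\psi_{1\eps}$ is essentially $(\psi_1+\eps)^\alpha - \eps^\alpha$ as in \Cref{l6}. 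The point is that expanding $-\Delta\phi - \vec U\cdot\nabla\phi$ produces three types of terms: (i) a term proportional to $\lambda_1 \psi_1 (\psi_1+\eps)^{\alpha-1}|u|^{p-1}\sign(u)\,u \ge 0$ coming from the eigenvalue equation $-\Delta\psi_1 - \vec U\cdot\nabla\psi_1 = \lambda_1\psi_1$; (ii) a term $\alpha(1-\alpha)|\nabla\psi_1|^2(\psi_1+\eps)^{\alpha-2}|u|^{p-1}\sign(u)\,u \ge 0$ which is nonnegative since $0\le\alpha\le 1$; and (iii) a term involving $|\nabla(|u|^{p-1}\sign u)|$ paired against $\nabla\psi_{1\eps}$ together with the genuinely second-order term $-\psi_{1\eps}\Delta(|u|^{p-1}\sign u)$ — which, after integration by parts in the quadratic-in-$\nabla u$ part, has a favourable sign because $t\mapsto |t|^{p-1}\sign t$ is monotone. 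Combined with the elementary inequality $ |u|^{p-1}\sign(u)\,f \le \frac{p-1}{p}|u|^p + \frac1p|f|^p$ on the right-hand side, and the $V$-term $V\,u\,|u|^{p-1}\sign(u)\psi_{1\eps} = V|u|^p\psi_{1\eps}\ge 0$ which we simply discard, one is left with $\int_\Omega |u|^p \psi_{1\eps}\le \int_\Omega |f|^p\psi_{1\eps}$ after absorbing the $|u|^p$ term (using that the coefficient $1$ in front of $u$ in \eqref{eq:+} beats $\frac{p-1}{p}$). Letting $k\to\infty$ and then $\eps\to0$ gives the claim, since $\psi_{1\eps}\to\psi_1^\alpha$ and $\psi_1^\alpha$ is equivalent to $\delta^\alpha$ by \Cref{t36}.

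For $p=+\infty$ the argument is different and simpler: here the estimate $\|u\|_{L^\infty} \le \|f\|_{L^\infty}$ (with $\delta^0 = 1$, so $\alpha=0$ necessarily in the $\vec U\ne 0$ case, while $\alpha$ is irrelevant when $\delta^\alpha$ multiplies an $L^\infty$ norm only through equivalence of norms — more precisely the statement for $p=\infty$ should be read as $\|u\|_{L^\infty(\Omega)}\le\|f\|_{L^\infty(\Omega)}$) follows from the maximum principle already encapsulated in \Cref{c1t4}. Indeed, set $M = \|f\|_{L^\infty}$; then $w := u - M$ satisfies $Lw = (f - M) - V u$ which, using $u\le M + (u-M)_+$ and $V\ge0$, can be arranged into the form $Lw = \tilde f(x) - G(x;w)$ with $\tilde f = f - M - V M \le 0$ and $G(x;\sigma) = V(x)\big((\sigma)_+ + \text{lower order}\big)$ verifying the sign condition $\sign(\sigma)G(x;\sigma)\ge0$; \Cref{c1t4} then yields $w\le0$, i.e. $u\le M$. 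Applying the same to $-u$ gives $u\ge -M$, hence $\|u\|_{L^\infty}\le M$. One must check that $u$ indeed lies in the class required by \Cref{c1t4}, which follows because $u\in D(A)$ and the regularity bundled into the definition of $D(A)$ (in particular $u\in L^{n',\infty}(\Omega)\cap L^1(\Omega;\delta^{-1})$ and $Lu = f - Vu \in L^1(\Omega;\delta)$).

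The existence and uniqueness of $u\in D(A)$ solving \eqref{eq:+} is not an issue: when $\vec U = 0$ and $\alpha\in[0,1]$, or when $\alpha=0$ and \eqref{eq:H with p greater n} holds, the data $f\in L^p(\Omega;\delta^\alpha)$ sits inside $L^1(\Omega;\delta(1+|\log\delta|))$ (for $\alpha<1$) or is handled by \Cref{t31} (for $\alpha=1$, using $V\ge c\delta^{-2}$ — although note that in \Cref{thm:8.4} no such hypothesis on $V$ is assumed when $\alpha=1$, so for the borderline case $\alpha=1$, $p<\infty$ one should instead invoke the $m$-accretivity in $L^1(\Omega;\delta)$ from \Cref{t32}, or argue directly), so $m$-accretivity from \Cref{t32} (for $\vec U=0$ this extends to all $p$) gives a unique $u$. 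The main obstacle I anticipate is the rigorous justification of the integration by parts with the singular/nonsmooth test function $\psi_{1\eps}|T_k(u)|^{p-1}\sign(u)$: one must confirm it is an admissible test function (it is, since $\psi_{1\eps}\in W^2L^{n,1}\cap H_0^1$ and the truncation keeps $|T_k u|^{p-1}\sign u$ bounded with the right Sobolev regularity coming from the $u_j$-approximation of \Cref{t11}), and that the second-order term $-\int\psi_{1\eps}\Delta(|u|^{p-1}\sign u)$ has the sign one expects after passing through the approximating sequence $u_j\in W_0^{1,1}\cap W^2L^{p,1}$; this is exactly where the monotonicity of $t\mapsto|t|^{p-1}\sign t$ and the structure of \Cref{l6} must be combined, and where one should work first at the level of $u_j$ (with $V_j$, $\vec U_j$) and then pass to the limit using the convergences in \Cref{t11,t12}.
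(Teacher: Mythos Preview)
Your approach is essentially the paper's: test the equation against $u^{p-1}\psi_{1\eps}$ (the paper first reduces to $f\ge 0$, $u\ge 0$ and so writes $u^{p-1}$ instead of $|u|^{p-1}\sign u$, and uses H\"older rather than Young on the right-hand side, but the mechanism is identical), then show the second-order contribution $I=\int_\O u\,(-\Delta(u^{p-1}\psi_{1\eps}))$ is nonnegative by integrating by parts and invoking $-\Delta\psi_{1\eps}\ge 0$ from \Cref{l6}. Two small points to fix: for the case $\alpha=0$ with $\vec U\ne 0$ your weight $\psi_{1\eps}=(\psi_1+\eps)^0-\eps^0\equiv 0$, so you must drop the weight and test directly against $|u|^{p-1}\sign u$ (the convection term then vanishes by the divergence-free condition, as in Lemma~2.6 of \cite{DGRT}); and your separate maximum-principle treatment of $p=\infty$ is a welcome addition, since the paper's H\"older argument is written only for finite $p$.
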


\begin{proof}
	As in the proof of \Cref{t32} we can assume without loss of generality that $f \ge 0$ and thus $u \ge 0$. By regularity arguments it can be well-justified that we can take as test function the one given $u^{p-1} \psi_{1\varepsilon} (x)$ with $\psi_{1,\varepsilon}$ as in \Cref{l6} if $\vec U =0$ and $u^{p-1}$ if $\alpha = 0$. Then, from \eqref{eq:vws Brezis}, and since $ V \ge 0 $, we get that
	\begin{equation*}
		\int_ \Omega |u|^p \psi_{1,\varepsilon} + I \le \int_ \Omega f u^ {p-1} \psi_{1\varepsilon}\le \left( \int _ \Omega f^ p \psi _{1 \varepsilon} \right)^{\frac 1 p} \left(  \int_{ \Omega} u^p \psi_{1\varepsilon}  \right)^ {  \frac {p-1} p },
	\end{equation*}
	where
	\begin{equation*}
		I = \int_ \Omega u (- \Delta(u^{p-1} \psi_ {1 \varepsilon})) - \int_{ \Omega } u\ \vec U \cdot \nabla (u^{p-1} \psi_{1,\varepsilon}).
	\end{equation*}
	We recall that in $L^p (\Omega, \delta^ \alpha)$ we can use as an equivalent norm to the one given by
	\begin{equation*}
		\left( \int _ \Omega |u|^p \psi _{1 \varepsilon} \right) ^ {\frac 1 p} .
	\end{equation*}
	Thus, it is enough to prove that $I \ge 0$. Assume now that $\vec U = 0$. Again, we can assume that $u$ is regular and so
	\begin{equation*}
		I = - \int _ \Omega \Delta u (u^ {p-1} \psi _{1,\varepsilon})=(p-1) \int _ \Omega u^ {p-2} |\nabla u|^2 \psi _{1,\varepsilon} + \int _ \Omega u^ {p-1} \nabla u \cdot \nabla \psi_{1,\varepsilon}.
	\end{equation*}
	The first integral is clearly nonnegative. Moreover
	\begin{equation*}
		\int _ \Omega u^ {p-1} \nabla u \cdot \nabla \psi_{1,\varepsilon} = \int _ \Omega \frac 1 p \nabla u^ p \cdot \nabla \psi_{ 1, \varepsilon} = \int _ \Omega \frac {u^p} {p} (-\Delta \psi _{1,\varepsilon}),
	\end{equation*}
	and, from the definition of $\psi_{1,\varepsilon}$, we get that $I \ge 0$. This concludes the proof for the case $\vec U = 0$.
	\\
	Assume now that $\alpha = 0$. Then, by applying Lemma 2.6 in \cite{DGRT}, we get that 
	$$\int_ \Omega u \ \vec U \cdot \nabla u^{ p -1 } = 0$$
	and again $I \ge 0$.
\end{proof}
As a first application of \Cref{t32} and \Cref{thm:8.4} we get the solvability of the associated parabolic problem
\begin{equation}\label{eq:PP}
	\begin{dcases}
		\frac{\partial u}{\partial t} - \Delta u + \vec U \cdot \nabla u + V (x) u = f(x,t) & \textrm{in } \Omega \times (0,T) , \\
		u(x,0) = u_0 (x) &\textrm{on } \partial \Omega,
	\end{dcases}
\end{equation}
for the class $0 \le V \in L^1_{loc} (\Omega)$ and thus also for very singular potentials. Here is a simple statement in term of ``mild solutions'' (see, e.g. \cite{Barbu}, \cite{Crandall-Liggett} or Proposition 1.5.14 in \cite{Brezis-Cazenave}).

\begin{theorem} \label{thm:8.4 2}
	Let $T > 0$, $\alpha \in [0,1]$, $\vec U\in L^\infty(\O)^n$ such that \eqref{eq:incompressible strong} and $V\GEQ0$ locally integrable (satisfying \eqref{singular potential} if $\alpha = 1$). Let $u_0 \in L^1 (\Omega; \delta^ \alpha)$ and $f \in L^1 (0,T; L^1 (\Omega; \delta^ \alpha))$. Then, there exists a unique $u \in C([0,T]: L^1 (\Omega; \delta^ \alpha))$ mild solution of \eqref{eq:PP}. Moreover $V (x) u \in L^1 (0,T; L^1 (\Omega, \delta^ \alpha))$ and, if $\hat u$ denotes the solution to data $\hat u_0$ and $\hat f$ under the same assumptions, then, for any $t \in [0,T]$,
	\begin{equation} \label{eq:**}
		\| ( u(t, \cdot) - \hat u(t, \cdot) )_+ \|_{L^1 (\Omega; \delta^ \alpha)} \le \| (u_0 - \hat u_0)_+ \|_{L^1 (\Omega, \delta^ \alpha)} + \int_0 ^t \| ( f(t, \cdot) - \hat f( t, \cdot)  )_+ \|_{L^1 (\Omega, \delta^ \alpha)}.
	\end{equation}
	In addition, if $\vec U = \vec 0$, $u_0 \in L^p (\Omega, \delta^ \alpha)$ and $f \in L^1 (0,T; L^p (\Omega, \delta^ \alpha))$ for some $p \in (1, +\infty] $, then $u \in C ([0,T]; L^p (\Omega, \delta^ \alpha))$ and \eqref{eq:**} holds replacing the norm of $L^1 (\Omega, \delta^ \alpha)$ by the norm of $L^p (\Omega, \delta^ \alpha)$.
\end{theorem}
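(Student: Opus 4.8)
The plan is to derive this statement from the theory of nonlinear contraction semigroups generated by $m$-accretive operators, applied to $A=-\Delta+\vec U\cdot\nabla+V$, whose $m$-accretivity has already been established: in $X=L^1(\Omega;\delta^\alpha)$ by \Cref{t32} (for $0\le\alpha<1$ unconditionally, and for $\alpha=1$ when $V$ satisfies \eqref{singular potential}), and in $L^p(\Omega;\delta^\alpha)$ by \Cref{thm:8.4} when $\vec U=\vec 0$. Equip $X$ with the equivalent norm $\|f\|_\alpha=\int_\Omega|f|\psi_1^\alpha\,dx$. Since $C_c^2(\Omega)\subset D(A)\subset X$, the domain is dense, so $\overline{D(A)}=X$; combined with $m$-accretivity this is exactly the setting of the Crandall--Liggett generation theorem and of B\'enilan's theory of integral (mild) solutions, which produce, for every $u_0\in X$ and $f\in L^1(0,T;X)$, a unique mild solution $u\in C([0,T];X)$ of $u'+Au=f$, $u(0)=u_0$, as the uniform limit of the implicit Euler scheme (see \cite{Barbu,Crandall-Liggett,Brezis-Cazenave}, and \cite{RakoADE}). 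This already gives the claimed existence and uniqueness.

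For the order-preserving estimate \eqref{eq:**} I would first upgrade accretivity to $T$-accretivity at the level of the resolvent and then let the abstract machinery propagate it to the evolution problem. Concretely, if $u+\lambda Au=f$ and $\hat u+\lambda A\hat u=\hat f$ in $X$, set $w=u-\hat u$ and $g=f-\hat f$; then $w+\lambda Aw=g$, so $Lw:=-\Delta w+\div(\vec U w)=\frac1\lambda(g-w)-Vw\in L^1(\Omega;\delta)$, and the $\sign_+$-form of the local Kato inequality (\Cref{t4}(1)) gives, for every $0\le\phi\in W_2$,
\[
\int_\Omega w_+\,L^*\phi\,dx\le\int_\Omega\phi\,\sign_+(w)\Big(\frac1\lambda(g-w)-Vw\Big)dx\le\frac1\lambda\int_\Omega\phi\,g_+\,dx-\frac1\lambda\int_\Omega\phi\,w_+\,dx,
\]
since $\sign_+(w)w=w_+$, $\sign_+(w)g\le g_+$ and $\sign_+(w)Vw=Vw_+\ge0$. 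Choosing $\phi=\psi_{1\eps}$ when $0\le\alpha<1$ — so that $\int_\Omega w_+L^*\psi_{1\eps}\,dx\ge0$ by \Cref{l6} — and $\phi=\psi_1$ when $\alpha=1$ (where $L^*\psi_1=\lambda_1\psi_1\ge0$ by \Cref{t36}), extending the admissible class of test functions by density exactly as in the proof of \Cref{thm:uniqueness L1 Omega delta}, and letting $\eps\to0$, yields $\|w_+\|_\alpha\le\|g_+\|_\alpha$. The standard theory then transfers this one-sided contraction to the semigroup and to the inhomogeneous problem, which is precisely \eqref{eq:**}; taking $u_0=\hat u_0$ and $f=\hat f$ recovers the $C([0,T];X)$ contraction and re-proves uniqueness.

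To see that $Vu\in L^1(0,T;X)$ I would carry the resolvent bound $\int_\Omega V|u_\lambda|\delta^\alpha\,dx\le C\int_\Omega|f|\delta^\alpha\,dx$ — available from \Cref{thm:existence result}, \Cref{c1t29}, \Cref{l3} and the proof of \Cref{t32} — along the Euler iterates, sum over the time steps, and pass to the limit using the a.e.\ convergence of the scheme and Fatou's lemma. Finally, the $L^p$ statement follows by repeating the whole argument with $X$ replaced by $L^p(\Omega;\delta^\alpha)$: \Cref{thm:8.4} gives accretivity there and, after the elementary shift $V\mapsto V+\frac1\lambda$ (which preserves local integrability and \eqref{singular potential}) for the range condition, $m$-accretivity; the order-preserving estimate is obtained as in the proof of \Cref{thm:8.4}, now testing the equation for $w=u-\hat u$ with $w_+^{p-1}\psi_{1\eps}$. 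I expect the genuine difficulty to be not any single step but the bookkeeping required to guarantee that the resolvent-level identities — Kato's inequality with the specific test functions, and the $V|u|\delta^\alpha$ bound — survive the passage to the limit in the Euler scheme; once \Cref{t32} and \Cref{thm:8.4} are available, the remainder is routine $m$-accretive semigroup theory.
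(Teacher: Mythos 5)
Your proposal is correct and follows essentially the same route as the paper, which derives the theorem directly from the $m$-accretivity results (\Cref{t32} and \Cref{thm:8.4}) together with the standard Crandall--Liggett/B\'enilan theory of mild solutions cited there. The one ingredient you supply explicitly that the paper leaves to the references is the resolvent-level $T$-accretivity needed for \eqref{eq:**}, and your derivation of it -- Kato's inequality in its $\sign_+$ form (\Cref{t4}) tested against $\psi_{1\eps}$ (via \Cref{l6}) for $\alpha<1$ and against $\psi_1$ for $\alpha=1$ -- is exactly in the spirit of the paper's proof of \Cref{t32} and is sound.
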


The application of abstract semigroup theory results on the time differentiability of solutions of \eqref{eq:PP} requires the reflexivity condition on the abstract Banach space. This holds in the case of the second part of \Cref{thm:8.4 2} when $1 < p < +\infty$ (and $\vec U = 0$ or  $\alpha = 0$). Nevertheless, a direct approach to this question for problem \eqref{eq:PP} can be obtained as an application of Proposition 1.3.4 of \cite{Brezis-Cazenave} if $ f= 0$ and Proposition 1.5.5 if $f \ne 0$. We have 
\begin{theorem}\label{t50}
	Let $T>0,\ \g_0\in D(A)$, $f\in C([0,T]; D(A)) \cup C^1([0,T];L^1(\Omega;\d^\a))$ for some $\a\in[0,1]$. Then, there exists a (unique) function satisfying :$$
	\begin{cases}
	\g\in C\Big([0,T];D(A)\Big)\cap C^1\Big([0,T];L^1(\O;\d^\a)\Big)\\
	\dfrac{d\g}{dt}(t)+A\,\g(t)=f(t),\ \forall\,t\in[0,T],\quad \g(0)=\g_0.
	\end{cases}$$
\end{theorem}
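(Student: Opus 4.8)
The plan is to obtain Theorem \ref{t50} as a direct application of the abstract theory of evolution equations governed by $m$-accretive operators in a Banach space, applied to $X = L^1(\Omega;\delta^\alpha)$ with the operator $A$ of \Cref{t32}. First I would recall that \Cref{t32} guarantees that $A:D(A)\subset X\to X$ is $m$-accretive (for $0\le\alpha<1$ with $\vec U\in L^\infty$, and for $\alpha=1$ under the very singular potential assumption \eqref{singular potential}), and that $D(A)$ is dense in $X$, as already observed in \Cref{subsec:8.1}. Thus $-A$ generates a strongly continuous contraction semigroup on $X$ by the Hille--Yosida--Phillips theorem (or, in the nonlinear-but-here-linear formulation, by the Crandall--Liggett generation theorem cited above). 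This already yields, for $\gamma_0\in D(A)$ and $f=0$, a solution $\gamma\in C([0,T];D(A))\cap C^1([0,T];X)$ of $\frac{d\gamma}{dt}+A\gamma=0$, $\gamma(0)=\gamma_0$ — this is precisely Proposition 1.3.4 of \cite{Brezis-Cazenave} applied to the generator $-A$, using that $\gamma_0\in D(A)$ gives the extra regularity.

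For the inhomogeneous case $f\neq 0$, I would invoke Proposition 1.5.5 of \cite{Brezis-Cazenave} (as the statement itself signals): if $f\in C([0,T];D(A))$ or $f\in C^1([0,T];X)$, then the mild solution given by the variation-of-constants formula $\gamma(t)=e^{-tA}\gamma_0+\int_0^t e^{-(t-s)A}f(s)\,ds$ is in fact a strong (classical) solution, i.e. $\gamma\in C([0,T];D(A))\cap C^1([0,T];X)$ and satisfies the equation pointwise in $t$ together with the initial condition. The hypothesis $\gamma_0\in D(A)$ ensures the homogeneous part has the right regularity, and the stated regularity of $f$ ensures the Duhamel integral term does too; adding them gives the conclusion. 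Uniqueness follows from accretivity of $A$: the difference of two solutions $w$ satisfies $\frac{dw}{dt}+Aw=0$, $w(0)=0$, and the contraction estimate (equivalently, testing against $\psi_1^\alpha\,\mathrm{sign}(w)$ as in the proof of \Cref{t32}) forces $w\equiv 0$; alternatively uniqueness of the mild solution is part of the cited abstract statements.

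The only genuinely delicate point is the identification of the abstract operator $A$ (defined via \Cref{d36} and its domain $D(A)$) with an $m$-accretive operator to which the Brezis--Cazenave propositions literally apply: one needs $\overline{D(A)}=X$, which holds since $C^2_c(\Omega)\subset D(A)$, and one needs the range condition $R(I+\lambda A)=X$ for all $\lambda>0$, which is exactly what \Cref{l5} (for $\alpha<1$) and \Cref{t31}/\Cref{thm:well posedness in L1 delta 1 + log delta} (for $\alpha=1$) provide, together with the contraction estimate \eqref{eq57} (resp. \eqref{eq54}). Once $A$ is certified $m$-accretive and densely defined on the Banach space $X=L^1(\Omega;\delta^\alpha)$, everything else is a verbatim citation of the abstract semigroup results, so no further computation is required. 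I expect the write-up to consist essentially of: (i) recall $m$-accretivity and density from \Cref{t32} and \Cref{subsec:8.1}; (ii) quote Proposition 1.3.4 of \cite{Brezis-Cazenave} for $f=0$; (iii) quote Proposition 1.5.5 of \cite{Brezis-Cazenave} for general $f$; (iv) note uniqueness from accretivity. The main obstacle, such as it is, is purely bookkeeping: making sure the notion of solution in \Cref{d36} matches the notion of strong solution in the abstract framework — i.e. that ``$A\gamma\in L^1(\Omega;\delta^\alpha)$ with the integral identity'' is the same as ``$\gamma\in D(A)$ in the $m$-accretive sense,'' which is true by construction of $D(A)$.
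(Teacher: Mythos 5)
Your proposal is correct and coincides with the paper's own (very terse) argument: the paper simply states the theorem as a direct application of Proposition 1.3.4 of \cite{Brezis-Cazenave} when $f=0$ and Proposition 1.5.5 when $f\neq 0$, once the $m$-accretivity and density of $D(A)$ from \Cref{t32} and \Cref{subsec:8.1} are in hand. Your write-up just fills in the same bookkeeping (generation of the contraction semigroup, Duhamel formula, uniqueness from accretivity) that the paper leaves implicit.
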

\begin{remark}
	It is possible to obtain several qualitative properties of solutions of the parabolic problem \eqref{eq:PP}. The smoothening effect for bounded potentials can be found, e.g., in \cite{Brezis-Cazenave,Brezis-Cazenave-b,Fila-Souplet-Weissler,RakoADE}. If $V(x)$ is a very singular potential then the Dirichlet condition is verified in $W_0^1 L^{n', \infty} (\Omega)$ once we assume that $\alpha \in [0,1)$. In fact, it is not complicated to adapt the techniques of proof of the \Cref{subsec:8.2} of this paper to show that if $u_0$ and $f(t, \cdot)$ are ``flat'' data near $\partial \Omega$ then the (unique) solution of \eqref{eq:PP} is also a ``flat solution'' in the sense that not only $u = 0$ but $\frac {\partial u} {\partial \vec \nu} = 0$ on $\partial \Omega$. Notice that this is in contrast with the instantaneous blow-up of solutions which arises when  $\vec U = 0$, $V(x) < 0$, $\lambda_1 (-\Delta + (1-\eps ) V ) = -\infty$ for some $\eps > 0$ and $V$ is very singular (see \cite{Cabre-Martel} and the references therein).
\end{remark}

\subsection{Complex Schrödinger problem}
\label{subsec:8.2}
Let us apply our previous results to the mathematical treatment of problem %
\eqref{Schroedinger  whole space}. In some sense, our main aim now is to show
that the solution of this Schr\"{o}dinger equation is \emph{localized} for
any\emph{\ }$t>0$,\emph{\ }in the sense that if we start with a localized
initial wave packet $\bm \psi _{0}\in H^{1}(\mathbb{R}^{n}:\mathbb{C})$ (here $%
\mathbb{C}$ denotes the complex numbers), i.e. such that 
\begin{equation*}
\text{support }\bm {\psi }_{0}\subset \overline{\Omega },
\end{equation*}%
then the particle still remains permanently confined in $\Omega $ in the
sense that 
\begin{equation*}
\text{support}\bm {\psi }(t,\cdot )\subset \overline{\Omega }\text{ for
any }t>0.
\end{equation*}%
As in \cite{Diaz 2} we start by considering the auxiliary eigenvalue problem 
\begin{equation*}
DP(V,\lambda ,\Omega ) \quad \begin{dcases} -\Delta u +\vec U\cdot
\nabla u +V(x)u =\lambda u & \hbox{in $\Omega$,} \\ u =0
& \hbox{on $\partial \Omega$.}\end{dcases}
\end{equation*}

\begin{proposition}
	\label{Prop:Existence eigenvalues}
	Let $0 \le V \in L^1_{loc} (\Omega)$. Then
	there exists\textbf{\ }a sequence of eigenvalues $\lambda _{m}\rightarrow
	+\infty $, $\lambda _{1}>$ $\lambda _{1,\Omega }$ (the first eigenvalue for
	the Dirichlet problem associated to the operator $-\Delta +\vec U\cdot
	\nabla $ on $\Omega $), $\lambda _{1}$ is isolated and $u _{1}>0$ on $%
	\Omega .$
\end{proposition}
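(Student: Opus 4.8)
\textbf{Proof plan for \Cref{Prop:Existence eigenvalues}.}
The plan is to build the eigenvalues and eigenfunctions by a spectral decomposition of a suitable compact resolvent operator, exactly in the spirit of \Cref{t36} but now with the potential $V$ included. First I would fix a parameter $\mu > 0$ large enough that $-\Delta + \vec U \cdot \nabla + V + \mu$ is $m$-accretive (which it is for every $\mu \ge 0$ by \Cref{t32}, since $\vec U \in L^\infty(\Omega)^n$ and $V \ge 0$ is locally integrable) and moreover invertible; this lets me define the resolvent operator $S := (-\Delta + \vec U \cdot \nabla + V + \mu)^{-1}$ acting on, say, $L^1(\Omega;\delta^\alpha)$ with $0 \le \alpha < 1$, or on $C(\overline\Omega)$ as in \Cref{t36}. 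The regularity and uniqueness results of \Cref{c1t29} and \Cref{thm:uniqueness L1 Omega delta} (together with $D(A) \subset W_0^1 L^{n',\infty}(\Omega)$ when the datum is more regular) guarantee that $S$ maps the ambient space into a space that embeds compactly back into it, so $S$ is a compact linear operator. Positivity of $S$ (i.e.\ $Sf > 0$ whenever $f > 0$) follows from the maximum principle, \Cref{c1t4}, applied with $G(x;\sigma) = (V(x)+\mu)\sigma$, which does satisfy the sign condition $\sign(\sigma)G(x;\sigma) \ge 0$.

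Next, I would invoke the Krein--Rutman theorem, \Cref{tKR}, on $X = C(\overline\Omega)$ with the positive cone $K = C_+(\overline\Omega)$ whose interior is nonempty: strong positivity of $S$ (which again comes from the strong maximum principle / Hopf-type behaviour encoded in \Cref{c1t4} and the fact that solutions lie in $W_0^1 L^{p,1}(\Omega) \hookrightarrow C(\overline\Omega)$) yields that the spectral radius $r(S) > 0$ is a simple eigenvalue with a strictly positive eigenfunction $u_1 \in \mathring K$. Translating back, $\lambda_1 := r(S)^{-1} - \mu$ is then a simple eigenvalue of $A = -\Delta + \vec U \cdot \nabla + V$ with eigenfunction $u_1 > 0$ on $\Omega$; simplicity gives that $\lambda_1$ is isolated in the spectrum. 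The remaining eigenvalues $\lambda_m$ come from the rest of the spectrum of the compact operator $S$: the nonzero spectrum of $S$ is a sequence accumulating only at $0$, hence the corresponding $\lambda_m = r_m^{-1} - \mu \to +\infty$.

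To get the strict inequality $\lambda_1 > \lambda_{1,\Omega}$, where $\lambda_{1,\Omega}$ is the first Dirichlet eigenvalue of $-\Delta + \vec U \cdot \nabla$ from \Cref{t36}, I would use a comparison/monotonicity argument: writing the eigenvalue relation for $u_1$ and testing against $\psi_1$ (the principal eigenfunction of $-\Delta + \vec U \cdot \nabla$, which is positive and comparable to $\delta$), one obtains $\int_\Omega V u_1 \psi_1 = (\lambda_1 - \lambda_{1,\Omega}) \int_\Omega u_1 \psi_1$; since $V \ge 0$ and is not identically zero (the interesting case is $V \ge C\delta^{-r}$), and $u_1, \psi_1 > 0$, the left-hand side is strictly positive, forcing $\lambda_1 > \lambda_{1,\Omega}$. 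The case $V \equiv 0$ is excluded by the hypothesis implicitly (or else one has equality), but since the statement is used only when $V$ is a very singular potential, the strict inequality holds.

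The main obstacle I expect is the compactness of the resolvent $S$ when $V$ is very singular, i.e.\ $V \notin L^1_{loc}$ is false but $V$ may fail to be in $L^1(\Omega;\delta)$ globally: one must check that the solution map lands in a space with genuinely compact embedding into the ambient space, despite the lack of a uniform $H^1$ or $W^{1,p}$ bound up to the boundary. The resolution is that \Cref{c1t29} (when the datum is in $L^1(\Omega)$) or more generally the estimates $\|\nabla u\|_{L^{n',\infty}} \le C\|f\|$ put $u$ in $W_0^1 L^{n',\infty}(\Omega)$, and $W_0^1 L^{n',\infty}(\Omega)$ embeds compactly into $L^r(\Omega)$ for $r < n'$ and into $C(\overline\Omega)$-type spaces after the usual regularity bootstrap using $\vec U \in L^\infty$; one then works on a space (such as a weighted $L^1$, or $C(\overline\Omega)$) on which $S$ is both well-defined, compact, and strongly positive, which the machinery already developed in Sections \ref{sec:5}--\ref{sec:7} supplies.
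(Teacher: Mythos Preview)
Your approach differs from the paper's in a nontrivial way, and it runs into a genuine difficulty that the paper sidesteps.

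The paper does \emph{not} work via the $L^1(\Omega;\delta^\alpha)$ accretivity theory or via Krein--Rutman on $C(\overline\Omega)$. Instead it works variationally in $L^2(\Omega)$: it introduces the weighted energy space $W=\{\varphi\in H_0^1(\Omega):V\varphi^2\in L^1(\Omega)\}$, uses Proposition~3 of \cite{DGRT} (a Lax--Milgram type result) to solve $Az=h$ for $h\in L^2(\Omega)\subset W'$, and then composes with the compact embedding $H_0^1(\Omega)\hookrightarrow L^2(\Omega)$ to obtain a compact resolvent $\widetilde T:L^2(\Omega)\to L^2(\Omega)$. The sequence $\lambda_m\to+\infty$ comes from standard spectral theory for this compact operator; $\lambda_1>\lambda_{1,\Omega}$ follows from the comparison principle (since $V\ge 0$); positivity of $u_1$ is quoted from \cite{Diaz 2}; and a Krein--Rutman variant is invoked only at the end to get that $\lambda_1$ is isolated. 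The point is that the Hilbert-space framework supplies compactness for free via $W\subset H_0^1\hookrightarrow\hookrightarrow L^2$, with no need to push the resolvent into $C(\overline\Omega)$.

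The gap in your plan is precisely the ambient space for Krein--Rutman. On $L^1(\Omega;\delta^\alpha)$ the positive cone has empty interior, so \Cref{tKR} as stated does not apply. On $C(\overline\Omega)$ the cone is fine, but you have not shown that $S=(A+\mu)^{-1}$ maps $C(\overline\Omega)$ into itself: from \Cref{c1t29} you only get $u\in W_0^1L^{n',\infty}(\Omega)$ and $Vu\in L^1(\Omega)$, so rewriting $-\Delta u+\vec U\cdot\nabla u=f-Vu$ gives a right-hand side merely in $L^1(\Omega)$, which is far from enough for a bootstrap into $C(\overline\Omega)$ when $V$ is very singular. (Indeed, $L^\infty$ regularity of eigenfunctions is proved \emph{later} in the paper, in \Cref{Thm: energy flat}, by a Moser iteration specific to the eigenvalue equation; it is not available as input here.) Your comparison argument for $\lambda_1>\lambda_{1,\Omega}$ is fine and matches the paper's in spirit, but the existence part needs to be redone in the $L^2/W$ framework.
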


\begin{proof}  
	We start by arguing as in the proof of Proposition 3 of \cite{DGRT}.
	We introduce the space
	\begin{equation*}
	W=\left\{ \varphi \in H_{0}^{1}(\Omega ):V\varphi ^{2}\in L^{1}(\Omega
	)\right\} .
	\end{equation*}%
	For any $h\in L^{2}(\Omega )$ we define the operator $Th=z\in W$ solution of
	the linear problem 
	\begin{equation}
	\begin{dcases} Az=h\text{ } & \text{in }\Omega , \\ z=0 & \text{on }\partial
	\Omega .\end{dcases}  \label{problem h}
	\end{equation}%
	We recall that the existence and uniqueness of a solution was obtained in
	Proposition 3 in \cite{DGRT} when $h\in W^{\prime }$ (the dual space of $W)$ and that,
	trivially, $L^{2}(\Omega )$ $\subset W^{\prime }.$Then the composition with
	the (compact) embedding $H_{0}^{1}(\Omega )\subset L^{2}(\Omega )$ is a
	selfadjoint compact linear operator $\widetilde{T}=i\circ T:L^{2}(\Omega
	)\rightarrow L^{2}(\Omega )$ for which we obtain in the usual way a sequence
	of eigenvalues $\lambda _{m}\rightarrow +\infty $. By well-known results
	(see e.g. \cite{Red-Simon 2} or \cite{Brezis libro}) we know that $\lambda
	_{1}>0$ (notice that $\lambda _{1}=0$ would imply that $z=0$)$.$ In fact,
	since $V(x)\geq 0$, by the comparison principle we know that $\lambda
	_{1}>\lambda _{1,\Omega }$. The positivity of the first eigenfunction $%
	u _{1}$ is an easy modification of Proposition 3.2 of \cite{Diaz 2}.
	Moreover a variant of the Krein-Rutman can be applied (see \cite{Daners})
	and so we know that $\lambda _{1}$ is isolated.
\end{proof}

\begin{remark}
When $r=2$ in (\ref{singular potential}) then, by the Hardy inequality, $%
W=H_{0}^{1}(\Omega )$.
\end{remark}

A different, and useful, consequence of Proposition 3 of \cite{DGRT} is the
following:

\begin{proposition}
	\label{Mononicity in L2}
	Let $0 \le V \in L^1_{loc} (\Omega)$. Then the operator
	$A:D(A)(\subset L^{2}(\Omega ))\rightarrow L^{2}(\Omega )$ given by 
	$D(A)=W=\left\{ \varphi \in H_{0}^{1}(\Omega ):V\varphi ^{2}\in L^{1}(\Omega
	)\right\} $ and $Au =-\Delta u +\vec U\cdot \nabla u
	+Vu $ if $u \in D(A)$ is a maximal monotone operator in $%
	L^{2}(\Omega )$.
\end{proposition}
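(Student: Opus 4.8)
The plan is to establish maximal monotonicity of $A$ in $L^2(\Omega)$ by verifying the two defining properties: monotonicity of $A$ and surjectivity of $I + \lambda A$ for $\lambda > 0$ (equivalently, of $I + A$, which suffices by a standard rescaling). Since $A$ is linear, monotonicity means $\int_\Omega (A u) u \ge 0$ for all $u \in D(A) = W$. For smooth $u$ this is immediate: integrating by parts, $\int_\Omega (-\Delta u) u = \int_\Omega |\nabla u|^2 \ge 0$; the convective term vanishes because $\int_\Omega (\vec U \cdot \nabla u) u = \frac 1 2 \int_\Omega \vec U \cdot \nabla (u^2) = -\frac 1 2 \int_\Omega (\diver \vec U)\, u^2 + \text{boundary} = 0$, using \eqref{eq:incompressible strong} (this is exactly the $L^2$-antisymmetry encoded in \eqref{eq:incompressible u weak}, or Lemma 2.6 of \cite{DGRT}); and $\int_\Omega V u^2 \ge 0$ since $V \ge 0$. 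For general $u \in W$ one justifies these identities by the density/approximation already set up in the paper and the fact that $V u^2 \in L^1$ by definition of $W$.

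For maximality, I would reduce to solving $A z + z = h$ for arbitrary $h \in L^2(\Omega)$, which is precisely the problem \eqref{problem h} (with $A$ replaced by $A + I$, still of the same type since adding $u$ changes nothing structural). This was already handled in Proposition 3 of \cite{DGRT}: for $h \in W'$ — and $L^2(\Omega) \hookrightarrow W'$ since $W \hookrightarrow L^2(\Omega)$ — there is a unique $z \in W$ solving the problem weakly, i.e. $\int_\Omega \nabla z \cdot \nabla \varphi + \int_\Omega (\vec U \cdot \nabla z)\varphi + \int_\Omega V z \varphi + \int_\Omega z\varphi = \int_\Omega h \varphi$ for all $\varphi \in W$. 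Taking $\varphi = z$ (legitimate since $z \in W$) gives $A z \in L^2$ directly from $Az = h - z$, hence $z \in D(A)$ and $(I+A)z = h$. This gives $R(I + A) = L^2(\Omega)$, and rescaling gives $R(I + \lambda A) = L^2(\Omega)$ for every $\lambda > 0$.

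The main obstacle, and the only genuinely delicate point, is the rigorous justification of the monotonicity identities — in particular the vanishing of the convective term and the integration by parts $\int_\Omega (-\Delta u)u = \int_\Omega |\nabla u|^2$ — for $u$ merely in $W$, since elements of $W$ need not have $\Delta u \in L^2$ and one only knows $A u \in L^2$ in the very weak (distributional) sense of \Cref{d36}. The resolution is to work at the level of the weak formulation from \cite{DGRT} rather than the strong one: for $z \in W$ solving the resolvent equation, the energy identity obtained by testing with $z$ itself yields $\int_\Omega |\nabla z|^2 + \int_\Omega V z^2 + \|z\|_{L^2}^2 = \int_\Omega h z$, and the convective term drops out by the antisymmetry \eqref{eq:incompressible u weak} extended to $W$ by density (using $\vec U \in L^\infty$, so $\vec U \cdot \nabla z \in L^2$ and $z \in L^2$ pair against each other). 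Thus monotonicity is really read off from the variational structure, and the whole statement follows by combining this with the surjectivity from Proposition 3 of \cite{DGRT}; I would phrase the proof as "as in the proof of \Cref{Prop:Existence eigenvalues}, invoking Proposition 3 of \cite{DGRT}" and then record the one-line energy estimate giving accretivity.
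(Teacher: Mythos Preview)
Your approach is essentially the same as the paper's: both invoke Proposition~3 of \cite{DGRT} for the surjectivity of $I+A$ on $L^2(\Omega)$, and both obtain the accretivity estimate $\|u\|_{L^2}\le\|h\|_{L^2}$ by testing the resolvent equation against the solution itself, with the convective term dropping out via the incompressibility of $\vec U$ (Lemmas~2.6--2.7 of \cite{DGRT}). One small slip: in your last paragraph you write ``using $\vec U\in L^\infty$'', but the standing hypothesis here is only \eqref{H}; the pairing is nonetheless handled precisely by those lemmas from \cite{DGRT}, which is what the paper cites.
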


\begin{proof} 
	 Given $h\in L^{2}(\Omega )$, the existence and uniqueness of solution of the equation $Au +u =h$ was obtained in Proposition 3 of \cite{DGRT}. Moreover, thanks to the assumptions on $\vec U$, by Lemmas 2.6 and
	2.7 of \cite{DGRT} we get that
	\begin{equation*}
	\left\Vert u \right\Vert _{L^{2}(\Omega )}\leq \left\Vert h\right\Vert
	_{L^{2}(\Omega )}
	\end{equation*}%
	which proves the monotonicity in $L^{2}(\Omega )$ (i.e. the operator is
	m-accretive in $L^{2}(\Omega )$).
\end{proof}

Let us prove now that the singularity of the potential implies that
all the eigenfunctions $u _{m}$ of operator $A$ are flat solutions (in
the sense that $u =\frac{\partial u }{\partial n}=0$ on $\partial
\Omega $). As usual in Quantum Mechanics we shall pay attention to the
associate eigenfunctions with normalized $L^{2}$-norm, i.e. such that 
\begin{equation}
\left\Vert u _{m}\right\Vert _{L^{2}(\Omega )}=1.
\label{eq:normalization}
\end{equation}

\begin{theorem}
\label{Thm: energy flat}
	Assume \eqref{singular potential} and let $u _{m}$ be an eigenfunction
	associated to the eigenvalue $\lambda _{m}$. Then $u _{m}\in L^{\infty
	}(\Omega )$ and $u _{m}$ is a flat solution of $DP(V,\lambda
	_{m},\Omega )$. In fact, there exists $\overline{K}_{m}>0$ such that 
	\begin{equation}
	\left\vert u _{m}(x)\right\vert \leq \overline{K}_{m}d(x,\partial
	\Omega )^{2}\quad \text{a.e. }x\in \Omega .  \label{eq:flat exponent 2}
	\end{equation}
\end{theorem}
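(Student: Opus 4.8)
The plan is to upgrade the $H^1_0$-regularity of $u_m$ to boundedness and then to the precise decay $|u_m(x)| \le \overline K_m \delta(x)^2$ by viewing the eigenfunction equation as a linear problem with right-hand side and potential for which the results of Sections 5–7 apply, and then by constructing an explicit supersolution near $\partial\Omega$. First I would note that $u_m \in W = H^1_0(\Omega)$ with $V u_m^2 \in L^1(\Omega)$ solves $-\Delta u_m + \vec U \cdot \nabla u_m + V u_m = \lambda_m u_m$ in the weak sense. Since $u_m \in H^1_0(\Omega) \subset L^{2^*}(\Omega)$, the right-hand side $\lambda_m u_m$ lies in some $L^q(\Omega)$; a standard bootstrap (Stampacchia-type truncation, or the $L^p$-theory recalled via Theorem \ref{thm:8.4} and the regularity results quoted from \cite{DGRT}) applied to $-\Delta u_m + \vec U\cdot\nabla u_m + V u_m = \lambda_m u_m$, using $V \ge 0$ to discard the zeroth-order term, yields $u_m \in L^\infty(\Omega)$. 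This also shows $\lambda_m u_m \in L^1(\Omega) \subset L^1(\Omega;\delta^\alpha)$ for every $\alpha\in[0,1]$, so by Theorem \ref{c1t29} (or Theorem \ref{t31}) the eigenfunction coincides with the unique very weak solution in $L^{n',\infty}(\Omega)\cap L^1(\Omega;\delta^{-1})$, and by Theorem \ref{c1t29} it satisfies $u_m \in W_0^1 L^{n',\infty}(\Omega)$, i.e. the Dirichlet condition $u_m = 0$ on $\partial\Omega$ holds.

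The heart of the matter is the quadratic decay \eqref{eq:flat exponent 2}, and here I would argue by a local comparison/barrier argument near $\partial\Omega$. By \eqref{singular potential} there is a neighbourhood $U$ of $\partial\Omega$ and $C>0$ with $V(x) \ge C\delta(x)^{-2}$ on $U\cap\Omega$. Since $u_m \in L^\infty$, pick $M \ge \|u_m\|_{L^\infty}$. I would look for a supersolution of the form $w(x) = K \delta(x)^2$ (or, to handle the fact that $\delta$ is only Lipschitz, a smooth function comparable to $\delta^2$, e.g. built from a smooth positive function vanishing to second order on $\partial\Omega$ such as $\varphi_1^2$ with $\varphi_1$ the principal Dirichlet eigenfunction, which satisfies $c_0\delta \le \varphi_1 \le c_1\delta$). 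For such $w$ one computes $-\Delta w + \vec U\cdot\nabla w + V w \ge V w - C' \ge C K \delta^{-2}\cdot c_0^2 \delta^2 - C' = C c_0^2 K - C'$ on $U\cap\Omega$, where $C'$ absorbs $|\Delta w|$ and $|\vec U\cdot\nabla w|$ (bounded since $\vec U\in L^\infty$ here and $w$ is $C^2$ with bounded derivatives near $\partial\Omega$); choosing $K$ large makes this $\ge \lambda_m M \ge \lambda_m u_m = -\Delta u_m + \vec U\cdot\nabla u_m + V u_m$ on $U\cap\Omega$. On the interior boundary $\partial U \cap \Omega$ we have $\delta \ge \delta_0 > 0$, so enlarging $K$ further gives $w \ge M \ge u_m$ there, while on $\partial\Omega$ both vanish. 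Then the maximum principle — precisely Corollary \ref{c1t4} of the Kato inequality Theorem \ref{t4}, applied to $\overline u := u_m - w$ on $U\cap\Omega$ with the zeroth-order term $G(x,s) = V(x)s$ satisfying the sign condition — forces $u_m \le w = K\delta^2$ on $U\cap\Omega$; applying the same to $-u_m$ gives $|u_m| \le K\delta^2$ near $\partial\Omega$, and away from $\partial\Omega$ the bound $|u_m| \le \|u_m\|_{L^\infty} \le (\|u_m\|_{L^\infty}\delta_0^{-2})\,\delta^2$ is trivial, so \eqref{eq:flat exponent 2} holds with a suitable $\overline K_m$. Finally, $|u_m(x)| \le \overline K_m \delta(x)^2$ together with $u_m \in H^1$ (hence $\nabla u_m$ has a trace sense along $\partial\Omega$, or more simply the quadratic bound forces the normal derivative to vanish) gives $\frac{\partial u_m}{\partial n} = 0$ on $\partial\Omega$, so $u_m$ is a flat solution.

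The main obstacle I anticipate is making the barrier argument rigorous when $V$ is only in $L^1_{loc}$ away from the boundary and when $\delta$ is merely Lipschitz: one must either smooth $\delta$ (replacing $\delta^2$ by a $C^2$ comparable function, as above) or localize carefully so that the only place the very singular lower bound $V \ge C\delta^{-2}$ is used is the thin neighbourhood $U$, while in the interior the boundedness of $u_m$ already suffices. A secondary technical point is justifying the use of Corollary \ref{c1t4} on the subdomain $U\cap\Omega$ rather than on all of $\Omega$: this requires checking that $u_m - w$ (extended appropriately, or with the comparison phrased through test functions supported in $U$) meets the hypotheses $\overline u \in W^{1,1}_{loc}\cap L^{n',\infty}$, $\overline u/\delta \in L^1$, and $L\overline u \in L^1(\Omega;\delta)$ of Theorem \ref{t4}; all of these follow from $u_m \in W_0^1L^{n',\infty}(\Omega)$, the Hardy inequality, and the bounds already established, but the bookkeeping must be done with care. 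Once the $L^\infty$ bound and the comparison tool are in place, the rest is routine.
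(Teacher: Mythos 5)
Your route is genuinely different from the paper's. The paper proves \Cref{Thm: energy flat} in one sweep by a Moser-type iteration (following Theorem 2.1 of \cite{Diaz 2} and \cite{Drabek-Hernandez}): it tests the eigenfunction equation with $\varphi=v_{m,M}^{2\kappa+1}$, $v_{m,M}=\min\{|u_m|,M\}\sign(u_m)$, so that the lower bound \eqref{singular potential} produces the term $\int_\Omega \underline{C}\,\delta^{-2}|v_M^{2\kappa+1}||u_m|$ in the energy estimate; iterating in $\kappa$ and letting $M\nearrow\infty$ yields simultaneously $u_m\in L^\infty$ and the weighted bound that gives $|u_m|\le \overline K_m\delta^2$. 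Crucially, in that argument the convective term is handled \emph{in integral form} via Lemmas 2.6--2.7 of \cite{DGRT} (the divergence-free structure makes $\int \vec U\cdot\nabla u_m\,v_M^{2\kappa+1}$ harmless), so only \eqref{H} is needed. Your two-step scheme (Stampacchia/bootstrap to $L^\infty$, then a barrier $w=K\varphi_1^2$ near $\partial\Omega$ compared via a maximum principle) is the strategy the paper itself attributes, in a remark, to Proposition 2.7 of \cite{OrPon}; it is a legitimate alternative and arguably more transparent about \emph{why} the decay exponent is $2$.

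There are, however, two points to repair. The substantive one: your supersolution inequality requires the \emph{pointwise} bound $-\Delta w+\vec U\cdot\nabla w+Vw\ge \lambda_m\|u_m\|_\infty$ a.e.\ on $U\cap\Omega$, and you absorb $\vec U\cdot\nabla w$ into a constant ``since $\vec U\in L^\infty$''. But $\vec U\in L^\infty$ is not a hypothesis of the theorem; under \eqref{H} one only has $\vec U\in L^{p,1}(\Omega)^n$, and on the set where $|\vec U|$ is large the inequality $CKc_0^2-2Kc_1\|\nabla\varphi_1\|_\infty|\vec U|\,\delta\ge \lambda_m M$ fails, so the barrier is not a supersolution. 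Unlike in the integral (Moser) approach, the divergence-free structure cannot be exploited to cancel a pointwise first-order term, so as written your proof only covers $\vec U\in L^\infty(\Omega)^n$ (or $\vec U=\vec 0$). The second point is minor and fixable, but not the way you propose: \Cref{c1t4} is formulated on all of $\Omega$ with test functions in $W_2$, and it is not the right tool on the subdomain $U\cap\Omega$ with the interior boundary $\partial U\cap\Omega$. Since $u_m\in H_0^1(\Omega)\cap L^\infty(\Omega)$ with $Vu_m^2\in L^1(\Omega)$, you do not need the very-weak Kato machinery at all: $z=(u_m-w)_+$ restricted to $U\cap\Omega$ lies in $H_0^1(U\cap\Omega)$ (it vanishes on $\partial U\cap\Omega$ because $w\ge M\ge u_m$ there, and near $\partial\Omega$ because $0\le z\le (u_m)_+\in H_0^1(\Omega)$), $Vz^2\le Vu_m^2\in L^1$, and a direct energy estimate testing the difference of the equation and the supersolution inequality with $z$ gives $z=0$. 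With these two repairs (and the extra hypothesis on $\vec U$) your argument closes; to get the theorem in the generality stated you would have to fall back on the paper's integral iteration.
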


\begin{proof}  
	It suffices to repeat all the arguments of Theorem 2.1 of \cite{Diaz 2} (concerning the case $r=2$ and $\vec U=\vec {0}$) since the
	the main idea of the proof consists in the use of a Moser-type iterative
	argument (as in \cite{Drabek-Hernandez}) and take as test functions
	\begin{equation}
	\varphi (x)=v_{m,M}^{2\kappa +1}(x)\text{, with }v_{m,M}^{{}}(x):=\min
	\{\left\vert u _{m}(x)\right\vert ,M\}\sign(u _{m}(x)),
	\label{Test truncated}
	\end{equation}%
	for any arbitrary $M,\kappa >0$. Then, by using (\ref{singular potential})
	and Lemmas 2.6 and 2.7 of \cite{DGRT}\ we conclude that $\varphi \in
	H_{0}^{1}(\Omega )$ is an appropriate test function and
	
	\begin{align}
	(2\kappa +1)\int_{\Omega }\left\vert v_{M}^{2\kappa }(x)\right\vert &
	\left\vert \nabla u _{m}\right\vert ^{2}dx+\int_{\Omega }\frac{%
	\underline{C}}{\delta (x)^{2}}\left\vert v_{M}^{2\kappa +1}(x)\right\vert
	\left\vert u _{m}\right\vert dx  \nonumber \\ 
	&\leq (2\kappa +1)\int_{\Omega }\left\vert v_{M}^{2\kappa }(x)\right\vert
	\left\vert \nabla u _{m}\right\vert ^{2}dx+\int_{\Omega }V(x)\left\vert
	v_{M}^{2\kappa +1}(x)\right\vert \left\vert u _{m}\right\vert dx \nonumber \\ 
	&=\lambda _{n}\int_{\Omega }\left\vert v_{M}^{2\kappa +1}(x)\right\vert
	\left\vert u _{m}\right\vert dx
	\label{eq: energy estimate}
	\end{align}%
	where we used the simplified notation $v_{M}=v_{m,M}$. This is exactly the
	same starting energy estimate than the one used in the proof of Theorem 2.1
	of \cite{Diaz 2} and thus the rest of the proof (passing to the limit when $%
	M\nearrow +\infty $) applies without any other modification.
\end{proof}

\begin{remark}
The flatness of the eigenfunctions $u _{m}$ of operator $A$ can be also
proved by using Proposition 2.7 of \cite{OrPon} nevertheless the statement
given here supplies some decay estimates on $u _{m}$ near $\partial
\Omega $ which are not given in the mentioned reference.
\end{remark}

\begin{remark}
The decay estimate (\ref{eq:flat exponent 2}) is not optimal if $r>2$ in (%
\ref{singular potential}). It seems possible to adapt the formal exposition
made in \cite{Frank} developing asymptotically some Bessel functions to
prove that in that case 
\begin{equation}
\left\vert u _{m}(x)\right\vert \leq \overline{K}_{m}{\delta(x)}^{r/4}\exp \left (-\frac{\widehat{K}_{m}}{(r-2)}\delta(x)^{-(r-2)/2} \right)\quad \text{a.e. }x\in \Omega ,
\label{exponential negative estimate}
\end{equation}%
for some positive constants $\overline{K}_{m}$ \ and $\widehat{K}_{m},$\ but
we shall not enter into the details here.
\end{remark}

\begin{remark}
Arguing as in \cite{Diaz 2} it is easy to get several qualitative properties
of solutions of the complex evolution Schrödinger problem

\begin{equation}  \label{Problem Evolution V}
\begin{dcases} 
	\mathbf{i}\frac{\partial \bm \psi}{\partial t}=-\Delta
		\bm \psi+\vec U\cdot \nabla \bm \psi+V(x)\bm \psi &
\text{in }(0,\infty )\times \mathbb R^{n} \\ \bm \psi
(0,x)=\bm \psi_{0}(x) & \text{on }\mathbb R^{n}\end{dcases}
\end{equation}%
for very singular potentials over $\Omega $ which are extended (for
instance) in a finite way to the whole space. So, we assume now that there
exists $q\in \lbrack 0,+\infty )$ such that%
\begin{equation}
V_{q,\Omega }(x)= 
\begin{cases}
V(x) & \text{if }x\in \Omega , \\ 
q & \text{if }x\in \mathbb R^{n}\setminus\Omega%
\end{cases}
\label{Potential extension q}
\end{equation}
and that (\ref{singular potential}) holds. We can study the time evolution
of a localized initial wave packet $\psi _{0}\in H^{1}(\mathbb R^{n}:\mathbb C)$ such that
support $\psi _{0}\subset \overline{\Omega }.$

Then we can prove that there exists a unique solution $\bm \psi \in
C([0,+\infty ):L^{2}(\mathbb R^{n}:\mathbb C))$ with 
$\bm \psi%
\in C([0,+\infty ):H^{1}(\mathbb{R}^{n}:\mathbb{C}))$
 and $V_{q,\Omega }(x)%
\mathbf{\psi \in }L^{2}(0,T:L^{2}(\mathbb{R}^{n}:\mathbb{C}))\}$
for any $T>0,$ and that the Galerkin decomposition 
\begin{equation}
\bm \psi_{\Omega }(t,x)=\sum_{m=1}^{\infty }\mathbf{a}_{m}e^{-\mathbf{i%
}\lambda _{m}t}u _{m}(x),  \label{Galerkin decomposition}
\end{equation}%
holds with convergence at least in $L^{2}(\mathbb{R}^{n}:\mathbb {C})$ where $\lambda _{m}$ and $u _{m}$ are the eigenvalues and
eigenfunctions given in Proposition \ref{Prop:Existence eigenvalues} and 
\begin{equation*}
\mathbf{a}_{m}=\int_{\Omega }\bm \psi_{0}(x)u _{m}(x)dx.
\end{equation*}%
For localizing purposes we assume that 
\begin{equation}
\sum_{m=1}^{\infty }\left\vert \mathbf{a}_{m}\right\vert \overline{K}%
_{m}<+\infty ,  \label{Hipotesis dato inicial}
\end{equation}%
where $\overline{K}_{m}>0$ was given in Theorem \ref{Thm: energy flat}.
Thus, we conclude that%
\begin{equation}
\left\vert \bm \psi(t,x)\right\vert \leq Kd(x,\partial \Omega
)^{2}\quad \text{for any }t>0\text{ and a.e. }x\in \Omega ,
\label{Estimate psi omega}
\end{equation}%
for some $K>0,$ and in consequence the unique solution of (\ref{Problem
Evolution V}) satisfies that support $\bm \psi(t,.)\subset \overline{%
\Omega }$ for any $t>0$.

Concerning the existence of solutions, it is enough to apply the
Hille-Yosida theorem (see, e.g. \cite{Red-Simon 2,Brezis libro,Brezis-Cazenave}).
For the Galerkin decomposition we can adapt the arguments given in \cite%
{Brezis-Cazenave}.
\end{remark}

\section*{Acknowledgments}

The authors thank Haïm Brezis for fruitful discussions. The idea behind the improved approximation result, \Cref{t3} comes from a remark of Haïm Brezis to the second author (D. G.-C.). The research of D. G\'omez-Castro was supported by a FPU fellowship from the Spanish government. The research of J.I. D\'iaz and D. G\'omez-Castro was partially supported by the project ref. MTM 2014-57113-P of the DGISPI (Spain).

\end{document}